\tikzset{>=latex,auto}
\newtheorem{theorem}{Theorem}[section]
\newtheorem{lemma}[theorem]{Lemma}
\newtheorem{corollary}[theorem]{Corollary}
\theoremstyle{definition}
\theoremstyle{remark}
\newtheorem{remark}[theorem]{Remark}
\theoremstyle{thmx}
\newtheorem{thmx}{Theorem}
\numberwithin{equation}{section}
\newcommand{\thmref}[1]{Theorem~\ref{#1}}
\newcommand{\remref}[1]{Remark~\ref{#1}}
\newcommand{\lemref}[1]{Lemma~\ref{#1}}
\newcommand{\corref}[1]{Corollary~\ref{#1}}
\newcommand{\figref}[1]{Fig.~\ref{#1}}
\newcommand{\vs}{\vspace{6pt}}
\newcommand{\CC}{{\mathbb C}}
\newcommand{\RR}{{\mathbb R}}
\newcommand{\HH}{{\mathbb H}}
\newcommand{\ZZ}{{\mathbb Z}}
\newcommand{\DD}{{\mathbb D}}
\newcommand{\sO}{{\mathscr O}}
\newcommand{\sC}{{\mathscr C}}
\newcommand{\con}{\operatorname{const.}}
\newcommand{\ve}{\epsilon}
\newcommand{\es}{\emptyset}
\newcommand{\sm}{\smallsetminus}
\newcommand{\bd}{\partial}
\newcommand{\ov}{\overline}
\newcommand{\resit}{\operatorname{r\acute{e}sit}}
\newcommand{\res}{\operatorname{res}}
\newcommand{\myre}{\operatorname{Re}}
\newcommand{\myim}{\operatorname{Im}}
\newcommand{\Log}{\operatorname{Log}}
\newcommand{\eps}{\epsilon}
\newcommand{\de}{\delta}
\newcommand{\De}{\Delta}
\newcommand{\om}{\omega}
\newcommand{\Om}{\Omega}
\newcommand{\la}{\lambda}
\newcommand{\La}{\Lambda}
\newcommand{\ga}{\gamma}
\newcommand{\Ga}{\Gamma}
\newcommand{\Mu}{\operatorname{M}}
\newcommand{\e}{\mathrm{e}}
\newcommand{\ii}{\mathrm{i}}
\newcommand{\oset}[3][0ex]{%
  \mathrel{\mathop{#3}\limits^{
    \vbox to#1{\kern-3\ex@
    \hbox{$\scriptstyle#2$}\vss}}}}
\newcommand{\diam}{\operatorname{diam}}
\newcommand{\Aut}{\operatorname{Aut}}
\DeclareMathAlphabet{\mathlib}{OT1}{LinuxLibertineT-OsF}{m}{it}
\definecolor{myblue}{rgb}{0.2,0.6,0.8}
\definecolor{mygreen}{cmyk}{75.5,0,93.5,0}
\newcommand{\bit}{\it \bfseries}
\begin{document}

\title[Buff forms and invariant curves]{Buff forms and invariant curves of near-parabolic maps}

\author[C. L. Petersen and S. Zakeri]{Carsten Lunde Petersen and Saeed Zakeri}

\address{Department of Mathematical Sciences, University of Copenhagen, Universitetsparken 5, DK-2100 Copenhagen {{\O}}, Denmark}

\email{lunde@math.ku.dk}

\address{Department of Mathematics, Queens College of CUNY, 65-30 Kissena Blvd., Queens, New York 11367, USA} 
\address{The Graduate Center of CUNY, 365 Fifth Ave., New York, NY 10016, USA}

\email{saeed.zakeri@qc.cuny.edu}

\date{\today}

\maketitle

\begin{abstract}
We introduce a general framework to study the local dynamics of near-parabolic maps using the meromorphic $1$-form introduced by X.~Buff. As a sample application of this setup, we prove the following tameness result on invariant curves of near-parabolic maps: Let $g(z)=\la z+O(z^2)$ have a non-degenerate parabolic fixed point at $0$ with multiplier $\la$ a primitive $q$th root of unity, and let $\ga: \, ]-\infty,0] \to \DD(0,r)$ be a $g^{\circ q}$-invariant curve landing at $0$ in the sense that $g^{\circ q}(\ga(t))=\ga(t+1)$ and $\lim_{t \to -\infty} \ga(t)=0$. Take a sequence $g_n(z)=\la_n z+O(z^2)$ with $|\la_n|\neq 1$ such that $g_n \to g$ uniformly on $\DD(0,r)$ and suppose each $g_n$ admits a $g_n^{\circ q}$-invariant curve $\ga_n: \, ]-\infty,0] \to \CC$ such that $\ga_n \to \ga$ uniformly on the fundamental segment $[-1,0]$. If $\la_n^q \to 1$ non-tangentially, then $\ga_n$ lands at a repelling periodic point near $0$, and $\ga_n \to \ga$ uniformly on $]-\infty,0]$. In the special case of polynomial maps, this proves Hausdorff continuity of external rays of a given periodic angle when the associated multipliers approach a root of unity non-tangentially.    
\end{abstract}

\setcounter{tocdepth}{1}

\tableofcontents

\section{Introduction}
When we study parabolic bifurcations we are up against changes on such tiny scales that they disappear before our eyes. Consider for example an analytic map $f_0(z) = z+z^2+O(z^3)$ for which the fixed point equation $f_0(z)=z$ has a double root at $0$. A generic analytic perturbation $f_\eps(z) = f_0(z)+\eps(z)$ has two nearby fixed points $z_1(\eps), z_2(\eps)$ with multipliers close to $1$, i.e., $z_1(\eps), z_2(\eps)\to 0$ and $f'_\eps(z_1(\eps)), f'_\eps(z_2(\eps))\to 1$ as $\ve \to 0$. What does the dynamics of $f_\eps$ look like near $0$? The theory of parabolic implosions, originally developed by P.~Lavaurs in his doctoral thesis, studies precisely this question. A main tool in this theory is an extended notion of Fatou coordinates for near-parabolic maps often known as Douady-Fatou coordinates (see \cite{L} and \cite{D}). \vs

A.~Douady suggested that the dynamics of a holomorphic map $f$ near a parabolic map $f_0$ with a fixed point of multiplier $1$ should look like the time-$1$ flow of the vector field $(f(z)-z) \frac{\bd}{\bd z}$. This idea was developed by M.~Shishikura \cite{Sh} and later in the thesis of R.~Oudkerk \cite{O} written under the supervision of Tan Lei. By its very construction the fixed points of $f$ are also the fixed points of the time-$1$ flow, with matching multiplicities. However, the two maps have different holomorphic indices at each fixed point. X.~Buff came up with the idea of an improved vector field whose time-$1$ flow has a holomorphic index matching that of $f$ at each fixed point, so it mimics the dynamics of $f$ to a higher degree of accuracy. It turns out that primitives (=anti-derivatives) of the dual $1$-forms of Buff vector fields are very close to being Douady-Fatou coordinates in the sense that they conjugate the near-parabolic dynamics to a map which is extremely close to the unit translation. This theme was developed by A.~Ch\'eritat in his habilitation thesis \cite{C} in which he studied parabolic implosions and applications in estimating the conformal radius of quadratic Siegel disks. Our approach has minimal overlap with his work and provides a setup for a range of applications, including the one presented here and those planned for upcoming papers. \vs 

Let us briefly outline this setup; full details are provided in \S \ref{hvff} and \S \ref{bfbp}. Let $U \subset \CC$ be a convex Jordan domain and $f$ be holomorphic in a neighborhood of $\ov{U}$ such that $\myre(f'(z))>0$ for all $z \in U$ and $f(z) \neq z$ for all $z \in \bd U$. Define the {\bit Buff form} of $f$ by
$$
\omega_f = \omega_f(z) \, dz := \frac{f'(z)-1}{(f(z)-z)\Log f'(z)} \, dz,
$$
where $\Log : \CC \sm ]-\infty,0] \to \{ x+iy : |y|<\pi \}$ is the principal branch of the logarithm. The $1$-form $\omega_f$ is non-vanishing and meromorphic in $U$ with finitely many poles at the fixed points of $f$. Moreover, at every fixed point $p=f(p) \in U$, 
$$
\res(\omega_f,p) = \begin{cases} \dfrac{1}{\Log f'(p)} & \quad \text{if} \ \ f'(p) \neq 1 \vs \\ \resit(f,p) & \quad \text{if} \ \ f'(p)=1 \end{cases}
$$
where $\resit(f,p)$ is the {\bit r\'esidu it\'eratif} of $f$ at $p$ (see \S \ref{hiri} and \lemref{res}). The primitive $Z=\phi_f(z) := \int_{z_0}^z \omega_f(s) \, ds$ is a multi-valued function that undergoes the monodromy $Z \mapsto Z+2\pi \ii \res(\om_f,p)$ when $z$ winds once around a fixed point $p$. We call $\phi_f$ the {\bit rectifying coordinate} of $\om_f$ since it transforms $\om_f$ to the $1$-form $dZ$. The collection of all local branches of $\phi_f$ define a Riemann surface with a translation structure. \vs

If $z \in U \cap f^{-1}(U)$ is not a fixed point of $f$, neither is any point of the segment $[z,f(z)] \subset U$ (\lemref{nofp}). Using this fact, the dynamics of $f$ away from the fixed points can be lifted under the rectifying coordinate $Z=\phi_f(z)$ to the new dynamics $F: Z \mapsto Z+1+u_f$, where 
$$
u_f(z):= -1+\int_{[z,f(z)]} \om_f(s) \, ds
$$ 
measures the deviation of $F$ from being the unit translation. The holomorphic function $u_f$, defined on the set of points in $U \cap f^{-1}(U)$ that are not fixed under $f$, extends holomorphically to zero across each fixed point $p=f(p)$. Moreover, if $p$ has multiplier $1$ and fixed point multiplicity $q+1 \geq 2$, then $u_f$ has a zero of order $2q$ at $p$ (\lemref{uw}). Thus, when $f$ is restricted to a small neighborhood of a fixed point, its lifted dynamics $F$ is close to the unit translation. More importantly, this property persists under small perturbations of $f$. \vs 

In \S \ref{bfnpm} we apply this setup to perturbations of a holomorphic map $g(z) = \la z + O(z^2)$ with a non-degenerate parabolic fixed point at $0$ whose multiplier $\la$ is a primitive $q$th root of unity. After an initial analytic change of coordinates we may assume
$$
f:= g^{\circ q} = z + z^{q+1} + b z^{2q+1} + O(z^{2q+2}), 
$$
where $b \in \CC$ is the holomorphic index of $f$ at $0$. Take a sequence $g_n(z)=\la_n z + O(z^2)$ of analytic perturbations of $g$ such that $|\la_n| \neq 1$ for all $n$. Let $f, f_n:=g_n^{\circ q}$ be holomorphic in a neighborhood of a small closed disk $\ov{\DD}(0,r_0)$ and $f_n \to f$ uniformly on $\ov{\DD}(0,r_0)$ as $n \to \infty$. The parabolic fixed point of $g$ at $0$ bifurcates into a simple fixed point of $g_n$ at $0$ of multiplier $\la_n=g'_n(0)$ and a $q$-cycle $\sO_n = \{ p_{n,1}, \ldots, p_{n,q} \}$ of multiplier $\mu_n:=(f_n)'(p_{n,j})$ independent of $j$. The points in $\sO_n$ are asymptotically the vertices of the regular $q$-gon formed by the $q$th roots of $1-\la_n^q$. Set 
$$
\La_n := \frac{1}{\Log \lambda_n^q} \qquad \text{and} \qquad \Mu_n := \frac{1}{\Log \mu_n}.
$$
Then $\La_n, \Mu_n \to \infty$, but $\La_n+q \Mu_n \to \resit(f,0)$ as $n \to \infty$. Moreover, we have the following expressions for the multi-valued rectifying coordinates $\phi=\phi_f$ and $\phi_n=\phi_{f_n}$ of the Buff forms $\om=\om_f$ and $\om_n=\om_{f_n}$, respectively:
\begin{align*}
\phi(z) & = -\frac{1}{qz^q} + \resit(f,0) \, \log z + E(z), \\
\phi_n(z) & = \La_n \log z + \Mu_n \sum_{j=1}^q \log(z-p_{n,j}) + E_n(z). 
\end{align*} 
Here the error terms $E,E_n$ are analytic in a neighborhood of $\ov{\DD}(0,r_0)$ with $E(0)=E_n(0)=0$ and $E_n \to E$ uniformly on $\ov{\DD}(0,r_0)$.

\begin{thmx}[Lifted dynamics is a uniform near-translation] \label{A} 
For every $0<\ve<1$ there is an $0<r<r_0$ such that the following holds for all sufficiently large $n$: \vs

If $z \in \DD(0,r)$ is not fixed under $f_n$, the curve $t \mapsto z_{n,t}=(1-t)z+t f_n(z)$ parametrizing the segment $[z,f_n(z)]$ lifts under $\phi_n$ to a curve $t \mapsto Z_{n,t}$ which satisfies 
\begin{equation}\label{vet}
|Z_{n,t}-(Z_{n,0}+t)|<\ve t \qquad (0 \leq t \leq 1).
\end{equation}
Similarly, the curve $t \mapsto \hat{z}_{n,t}=(1-t)z+t f_n^{-1}(z)$ parametrizing $[z,f_n^{-1}(z)]$ lifts under $\phi_n$ to a curve $t \mapsto \hat{Z}_{n,t}$ which satisfies  
\begin{equation}\label{vett}
|\hat{Z}_{n,t}-(\hat{Z}_{n,0}-t)|<\ve t \qquad (0 \leq t \leq 1).
\end{equation}
\end{thmx}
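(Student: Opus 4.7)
\smallskip
\noindent\textbf{Proof plan.}

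The cleanest route is to differentiate $Z_{n,t}=\phi_n(z_{n,t})$ in $t$. Set $\tau_n := f_n-\id$ and $v_n(w) := (f_n'(w)-1)/\Log f_n'(w)$, so that the Buff form factors as $\om_n = v_n/\tau_n$. Since $\dot z_{n,t}=\tau_n(z)$, one obtains
$$
\dot Z_{n,t} \;=\; \om_n(z_{n,t})\,\tau_n(z) \;=\; v_n(z_{n,t})\cdot\frac{\tau_n(z)}{\tau_n(z_{n,t})}.
$$
The key gain is that $v_n$ is holomorphic on $\ov{\DD}(0,r_0)$ for large $n$ (since $f_n'\to f'$ uniformly and $\myre f'>0$ near $0$), and $v_n=1+\tfrac12(f_n'-1)+O((f_n'-1)^2)$, so $|v_n-1|$ is controlled by $|f_n'-1|$.

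Writing $z_{n,t}=z+t\tau_n(z)$ and integrating $\tau_n'$ along the segment,
$$
\tau_n(z_{n,t}) \;=\; \tau_n(z)\bigl(1+t\,\bar\tau_n'(z,t)\bigr),\qquad \bar\tau_n'(z,t) := \int_0^1\tau_n'(z+st\tau_n(z))\,ds,
$$
so the apparently singular ratio collapses to $\tau_n(z)/\tau_n(z_{n,t})=1/(1+t\,\bar\tau_n'(z,t))$. Both $|v_n-1|$ and $|\bar\tau_n'|$ can be made uniformly small on a sufficiently small disk: the parabolic normal form $f(w)=w+w^{q+1}+\cdots$ gives $|f'(w)-1|=O(|w|^q)$, and $f_n'\to f'$ uniformly on $\ov{\DD}(0,r_0)$ by hypothesis, so
$$
|\tau_n'(w)| \;=\; |f_n'(w)-1| \;\leq\; |f_n'(w)-f'(w)|+|f'(w)-1|
$$
and the analogous bound on $|v_n(w)-1|$ are arbitrarily small on $\ov{\DD}(0,2r)$ once $r$ is chosen small and $n$ is taken large. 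For such $r$, the segment $[z,f_n(z)]$ stays in $\ov{\DD}(0,2r)$ whenever $z\in\DD(0,r)$, since $|\tau_n(z)|\leq |z|\cdot\sup_{\ov{\DD}(0,2r)}|\tau_n'|$ is small.

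Combining these bounds (and using $|\bar\tau_n'|\leq 1/2$ to control the denominator),
$$
\bigl|\dot Z_{n,t}-1\bigr| \;=\; \left|\frac{(v_n(z_{n,t})-1)-t\,\bar\tau_n'(z,t)}{1+t\,\bar\tau_n'(z,t)}\right| \;\leq\; 2\bigl(|v_n(z_{n,t})-1|+|\bar\tau_n'(z,t)|\bigr) \;<\;\ve,
$$
and integrating from $0$ to $t$ yields \eqref{vet}. For \eqref{vett}, note that the segment $[z,f_n^{-1}(z)]$ equals, as a set, $[w,f_n(w)]$ for $w:=f_n^{-1}(z)$. Since $f_n$ is a univalent near-identity on $\DD(0,r)$ for large $n$, the appropriate branch of $f_n^{-1}$ is defined on, say, $\DD(0,r/2)$ and maps into $\DD(0,r)$; then $\hat Z_{n,t}$ is the forward lift from $w$ reparametrized by $t\mapsto 1-t$, and \eqref{vett} follows from \eqref{vet} applied to $w$.

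The main obstacle is purely technical: choosing a single $r$ that works uniformly in $n$ and balancing the various $\to 1$ limits against the target $\ve$. The conceptual key is the factorization $\om_n=v_n/\tau_n$, which trades the singular $1$-form for a bounded holomorphic function divided by $\tau_n$, the latter cancelling exactly against the $\tau_n(z)$ produced by the chain rule in $\dot Z_{n,t}$.
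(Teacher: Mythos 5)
Your argument is correct and takes a genuinely different route from the paper's. The paper introduces the auxiliary functions $u_{f,t}(z)=-t+\int_{[z,z_t]}\om_f$, first bounds $|u_{f,t}(z)|<\ve t$ for the limiting parabolic map $f$ via explicit Taylor manipulations of the normal form (\lemref{whole}, which relies on the expansion computed in the proof of \lemref{res}), and then transfers this to the perturbations $f_n$ by a separate continuity estimate $|u_{f_n,t}-u_{f,t}|<\ve t$ (\lemref{uun}). You instead work directly with $f_n$: the factorization $\om_n=v_n/\tau_n$ combined with the exact identity $\tau_n(z_{n,t})=\tau_n(z)\bigl(1+t\,\bar\tau_n'(z,t)\bigr)$ collapses the singular $1$-form into a quantity controlled entirely by $\sup|f_n'-1|$, and the pointwise bound $|\dot Z_{n,t}-1|<\ve$ then integrates to the claim in one stroke. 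This bypasses the parabolic normal form (you only need $f'(0)=1$ and continuity of $f'$, not the rate $|f'(z)-1|=O(|z|^q)$) as well as the two-step ``limit first, perturb second'' structure of the paper; as a bonus it re-proves \lemref{nofp}, since $|1+t\bar\tau_n'|\ge 1/2$ forces $\tau_n(z_{n,t})\neq 0$ along the whole segment.

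One small but real caution on the inverse case. The assertion that \eqref{vett} ``follows from \eqref{vet} applied to $w=f_n^{-1}(z)$'' by the reparametrization $t\mapsto 1-t$ is not a logical consequence of the \emph{statement} of \eqref{vet}: writing $\hat Z_{n,t}=Z_{n,1-t}(w)$ and $\hat Z_{n,0}=Z_{n,1}(w)$, and using only the bounds $|Z_{n,s}(w)-Z_{n,0}(w)-s|<\ve s$ at $s=1-t$ and $s=1$, the triangle inequality gives $|\hat Z_{n,t}-(\hat Z_{n,0}-t)|<\ve(2-t)$, which is useless near $t=0$. What does work is exactly your derivative method applied once more: since $\hat z_{n,t}=z_{n,1-t}(w)$, one finds $\dot{\hat Z}_{n,t}=-v_n(\hat z_{n,t})\big/\bigl(1+(1-t)\bar\tau_n'(w,1-t)\bigr)$, hence $|\dot{\hat Z}_{n,t}+1|<\ve$ by the same estimates, and integrating over $[0,t]$ yields \eqref{vett}. (Equivalently, integrate your bound $|\dot Z_{n,s}(w)-1|<\ve$ over $s\in[1-t,1]$.) The paper itself notes that \eqref{vett} requires a separate computation of $Z_{n,1-t}-Z_{n,1}+t$ rather than a formal consequence of \eqref{vet}; your method handles it cleanly, but the write-up should say so explicitly.
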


The special case $t=1$ of the above bounds means that for all sufficiently large $n$ the dynamics of $f_n, f_n^{-1}$ in $\DD(0,r)$ can be lifted under $\phi_n$ to maps $F_n, F_n^{-1}$ that satisfy $|F_n(Z)-(Z+1)|<\ve$ and $|F_n^{-1}(Z)-(Z-1)|<\ve$. If the first $k$ iterates of $z$ under $f_n$ or $f_n^{-1}$ stay in $\DD(0,r) \sm  (\{ 0 \} \cup \sO_n )$, it follows that the first $k$ iterates of $Z$ under $F_n$ or $F_n^{-1}$ stay in the cones  
\begin{align}\label{coneC}
\sC^+_\ve(Z) & :=\{ Z+R\e^{\ii \theta}: R>0, |\theta|<\sin^{-1} \ve \} \notag \vs \\
\sC^-_\ve(Z) & :=\{ Z+R\e^{\ii \theta}: R>0, |\theta-\pi|<\sin^{-1} \ve \}, 
\end{align}
respectively. The bounds \eqref{vet} and \eqref{vett} can be interpreted geometrically as saying that the segment $[z,f_n(z)]$ lifts under $\phi_n$ to a curve joining $Z$ and $\hat{Z}=F_n(Z)$ that lies in $\sC^+_\ve(Z) \cap \sC^-_\ve(\hat{Z})$ (see \figref{twocones}). \vs 

\begin{figure}[]
	\centering
	\begin{overpic}[width=\textwidth]{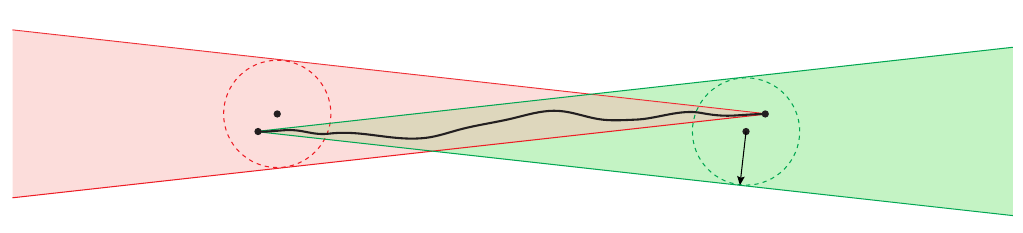}
\put (24.2,7.5) {\footnotesize $Z$}
\put (73.7,8.5) {\footnotesize $Z+1$}
\put (73,12.5) {\footnotesize $\hat{Z}=F_n(Z)$}
\put (24.2,12.3) {\footnotesize $\hat{Z}-1$}
\put (71.3,7) {\footnotesize $\ve$}
\put (90,9) {\footnotesize $\sC^+_\ve(Z)$}
\put (5,11) {\footnotesize $\sC^-_\ve(\hat{Z})$}
	\end{overpic}
\caption{\sl Illustration of \thmref{A}. The segment $[z,f_n(z)]$ lifts under $\phi_n$ to a curve that joins $Z$ and $\hat{Z}=F_n(Z)$ and lies in the intersection of the cones $\sC^+_\ve(Z)$ (in green) and $\sC^-_\ve(\hat{Z})$ (in pink). The $F_n$-orbit of $Z$ and the $F_n^{-1}$-orbit of $\hat{Z}$, as long as they are defined, will stay in the cones $\sC^+_\ve(Z)$ and $\sC^-_\ve(\hat{Z})$, respectively.}
\label{twocones}
\end{figure}

In an ongoing joint project, we set to show that under mild conditions on the multipliers $\la_n$ (roughly, not approaching $\la$ too radially) the orbits of $F_n$ have uniform distance to the orbits of the unit translation, so they stay in horizontal strips rather than cones. This is accomplished by showing that for the normalized Douady-Fatou coordinate $\Phi_n$ which conjugates $F_n$ to the unit translation the distance $\| \Phi_n - \operatorname{id}\|_{\infty}$ is uniformly bounded. Such estimates are of interest in the study of near-parabolic regime and have applications, for example, in understanding the geometry of the limbs of the Mandelbrot set. \vs

In \S \ref{apphl} we use \thmref{A} and the framework provided by Buff forms to study the behavior of invariant curves of near-parabolic maps. To motivate our result, let us first consider the familiar case of polynomial maps and their external rays.  
Let $P_n,P$ be polynomials of degree $d \geq 2$ with connected Julia sets such that $P_n \to P$, and take an angle $\theta \in \RR/\ZZ$ which has period $q$ under multiplication by $d$. Consider the external rays $R_{P_n}(\theta), R_P(\theta)$ landing at $\zeta_n, \zeta$ respectively. These rays come equipped with natural parametrizations by the Green's potential $s \in \, ]0,+\infty[$. Working with the parameter $t=\log s/ \log (d^q)$ instead, we can think of the tails of the external rays $R_{P_n}(\theta), R_P(\theta)$ as arcs $\ga_n, \ga : ]-\infty,0] \to \CC$, with $\lim_{t \to -\infty} \ga_n(t)=\zeta_n$ and $\lim_{t \to -\infty} \ga(t)=\zeta$, which satisfy the invariance relations $P_n^{\circ q}(\ga_n(t))=\ga_n(t+1)$ and $P^{\circ q}(\ga(t))=\ga(t+1)$. Continuity of the B\"{o}ttcher coordinate shows that $\ga_n \to \ga$ uniformly on the fundamental segment $[-1,0]$, hence on every compact subset of $]-\infty, 0]$. It is well known that if $\zeta$ is repelling, then $\ga_n \to \ga$ uniformly on $]-\infty,0]$ and in particular $\zeta_n \to \zeta$. However, when $\zeta$ is parabolic, the behavior of $\ga_n$ can be wild depending on how $P_n$ approaches $P$ in the parameter space of polynomials of degree $d$. In such cases the Hausdorff limit of $\ga_n([-\infty,0])$ can include additional heteroclinic or homoclinic arcs in the Fatou set of $P$. We refer to our recent work \cite{PZ1} for a comprehensive analysis of the structure of possible Hausdorff limits. An emerging theme in that work was that to create wild behavior of $\ga_n$ one often needs the multipliers of the bifurcated cycle of $P_n$ near $\zeta$ to approach a root of unity tangentially (compare \cite[Theorem D]{PZ1}). Here we justify this statement in a general setting beyond  polynomials and external rays. \vs

Once again consider $g_n(z)=\la_n z + O(z^2) \to g(z)=\la z+O(z^2)$ as in the discussion leading to \thmref{A}. Thus, $\la$ is a primitive $q$th root of unity, $|\la_n| \neq 1$, and $f_n=g_n^{\circ q} \to f=g^{\circ q}$ uniformly on $\ov{\DD}(0,r_0)$. Suppose $\ga: \ ]-\infty,0] \to \DD(0,r_0)$ is an $f$-invariant curve that satisfies $f(\ga(t)) = \ga(t+1)$ and $\lim_{t \to -\infty}\ga(t) = 0$. Take a sequence $\ga_n : \ ]-\infty, 0] \to \CC$ of $f_n$-invariant curves satisfying $f_n(\ga_n(t)) = \ga_n(t+1)$ such that $\ga_n \to \ga$ uniformly on compact subsets of $]-\infty,0]$. Recall that $\la_n$ and $\mu_n$ are the multipliers of the fixed point $0$ and the $q$-cycle $\sO_n$ for $g_n$, respectively. It is easy to check that $\la_n^q \to 1$ non-tangentially if and only if $\mu_n \to 1$ non-tangentially as $n \to \infty$. In this case, and after passing to a subsequence, we may assume that either $|\la_n|>1, |\mu_n|<1$ or $|\la_n|<1, |\mu_n|>1$ for all $n$ (see \S \ref{ntma}).    

\begin{thmx}[Tameness]\label{B}
Suppose $\la_n^q \to 1$ (equivalently, $\mu_n \to 1$) non-tangentially as $n \to \infty$. Then \vs
\begin{enumerate}
\item[(i)]
For all sufficiently large $n$ the curve $\ga_n$ lands at a repelling fixed point of $f_n$. More precisely, $\lim_{t \to -\infty}\ga_n(t) = 0$ if $|\la_n|>1$, and $\lim_{t \to -\infty}\ga_n(t) = p_{n,j}$ for some $1 \leq j \leq q$ if $|\la_n|<1$. \vs
\item[(ii)]
As $n \to \infty$, $\ga_n \to \ga$ uniformly on $]-\infty,0]$. In particular, $\ga_n([-\infty,0]) \to \ga([-\infty,0])$ in the Hausdorff metric. 
\end{enumerate}
\end{thmx}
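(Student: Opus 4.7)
The plan is to lift the backward iteration of $\ga_n(0)$ under $f_n$ to the rectifying coordinates $\phi_n$ and exploit the near-translation dynamics supplied by \thmref{A}. Non-tangentiality of $\la_n^q\to 1$ yields $\de>0$ so that $\La_n$ lies in a sector of half-opening $\pi/2-\de$ about the positive real axis when $|\la_n|>1$ (and about the negative real axis when $|\la_n|<1$), while the constraint $\La_n+q\Mu_n\to\resit(f,0)$ forces $\Mu_n$ into the opposite sector. Choose $\ve\in(0,1)$ small enough that $\sin^{-1}\ve<\pi/2-\de$, let $r=r(\ve)\in(0,r_0)$ be as supplied by \thmref{A}, and fix $T>0$ with $\ga(]-\infty,-T])\subset\DD(0,r/2)$. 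Starting from the hypothesis of uniform convergence on $[-1,0]$, induction over fundamental segments via $f_n\to f$ and $\ga_n(t-1)=f_n^{-1}(\ga_n(t))$ extends this convergence to every compact subset of $]-\infty,0]$; in particular $\ga_n([-T,0])\subset\DD(0,r)$ for all sufficiently large $n$. Choose a continuous lift $Z_n:[-T,0]\to\CC$ of $\phi_n\circ\ga_n$ converging uniformly to the corresponding lift $Z$ of $\phi\circ\ga$, using $\phi_n\to\phi$ locally uniformly away from the fixed points.

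For (i), I apply \thmref{A} inductively in the backward direction. Assuming $\ga_n(-T),\ldots,\ga_n(-T-k)$ all lie in $\DD(0,r)\sm(\{0\}\cup\sO_n)$, the second estimate of \thmref{A} yields $|Z_n(-T-k-s)-(Z_n(-T-k)-s)|<\ve s$ for every $s\in[0,1]$ (the correct branch being forced by continuity along $\ga_n$ together with the fact that a short $\ga_n$-arc is homotopic to its chord in the complement of the fixed points for large $n$), so the whole lifted arc $Z_n([-T-k-1,-T])$ sits inside the cone $\sC_\ve^-(Z_n(-T))$. To close this bootstrap for all $k\ge 0$, I exploit the explicit form
\begin{equation*}
\phi_n(z)=\La_n\log z+\Mu_n\sum_{j=1}^q \log(z-p_{n,j})+E_n(z).
\end{equation*}
As $z$ approaches the repelling fixed point the corresponding logarithm dominates and $\phi_n(z)$ diverges in direction $\pi+\arg(\rho)$, where $\rho\in\{\La_n,\Mu_n\}$ is the residue there; non-tangentiality places this direction in a sector of half-opening $\pi/2-\de$ about the negative real axis, which strictly contains $\sC_\ve^-$. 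Near the attracting fixed point the divergence is in the opposite sector about the positive real axis, disjoint from $\sC_\ve^-$. Combined with boundedness of $E_n$, this shows that $\phi_n^{-1}(\sC_\ve^-(Z_n(-T)))\cap\DD(0,r)$ is a small neighborhood of the repelling fixed point, $f_n^{-1}$-invariant for large $n$; the bootstrap closes, and the orbit converges to the unique repelling fixed point accessible through $\sC_\ve^-$---namely $0$ when $|\la_n|>1$, and the $p_{n,j}$ selected by the lift when $|\la_n|<1$.

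For (ii), the cone inclusion yields a quantitative bound $|Z_n(-T-t)-Z_n(-T)+t|\le\ve t$ for all $t\ge 0$. Near the repelling fixed point $p$, inversion of $\phi_n$ gives $z-p\asymp\exp(\phi_n(z)/\res(\om_n,p))$ up to a bounded multiplicative factor, so this bound translates into $\diam\ga_n(]-\infty,-T'])\to 0$ uniformly in $n$ as $T'\to\infty$. Together with the uniform convergence on each compact segment $[-T',0]$, this tightness produces uniform convergence on $]-\infty,0]$.

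The main obstacle I anticipate is the bootstrap in paragraph two: pinning down the image $\phi_n(\DD(0,r))$ as a translation surface with monodromy around the several poles, and arguing that the cone $\sC_\ve^-$ issuing from $Z_n(-T)$ selects unambiguously one repelling fixed point. Executing this will require uniform estimates on $E_n$ across $\DD(0,r)$ and careful separation between the two divergence directions secured by non-tangentiality.
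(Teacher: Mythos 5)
Your overall strategy matches the paper's: lift $\ga_n$ under the rectifying coordinate $\phi_n$, use the cone estimate of Theorem~A, and use non-tangentiality to tilt the cone into the "sheet" associated with a repelling fixed point. The obstacle you flag at the end --- pinning down the branch of $\phi_n$ and arguing that the cone selects one repelling fixed point unambiguously --- is precisely where your sketch is incomplete, and the paper invests most of its technical work there. Concretely:

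\begin{itemize}
\item You assert that ``$\phi_n^{-1}(\sC^-_\ve(Z_n(-T)))\cap\DD(0,r)$ is a small neighborhood of the repelling fixed point, $f_n^{-1}$-invariant.'' As written this has no meaning: $\phi_n^{-1}$ is multi-valued, and a single cone has infinitely many preimages differing by monodromy $2\pi\ii\Mu_n$ and $2\pi\ii\La_n$. The paper resolves this by introducing the canonical neighborhoods $W_{n,j}$ of the rotated vector fields $\alpha_n\chi_n$, $\beta_n\chi_n$ (\S 2.4--2.5, Corollary~\ref{jcs}); their $\phi_n$-images are genuine half-planes $H_{n,j}$, and the backward invariance you need is established for $W_{n,q}$ rather than for an ad hoc preimage of a cone. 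The choice $\ve<\sin\delta$ is exactly what makes the tail of $\sC^-_\ve(Z_0)$ eventually sit in $H_{n,q}$, and the negative curvature of the lifted circle $|z|=r$ (which you do not mention) is what guarantees that each $\bd W_{n,j}$ touches the circle at finitely many points and that the half-planes $H_{n,j}$ are placed as in Figure~\ref{sheets}.
\item You also need, and only gesture at, the compatibility $F_n^{-1}(\Ga_n(t))=\Ga_n(t-1)$ of the two ways of lifting the backward orbit (via the chord $[z,f_n^{-1}(z)]$ and via the curve $\ga_n$). This is a genuine issue since $\ga_n$ is an arbitrary continuous curve and a single homotopy at the fundamental segment must be propagated through all backward iterates in the punctured disk. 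The paper isolates exactly this in Lemma~\ref{homot} and Corollary~\ref{joo}.
\item Your argument for (ii) has a quantitative error. Near $p=p_{n,q}$ the rectifying coordinate is $\phi_n(z)=\Mu_n\log(z-p)+B_n+o(1)$ where $B_n=\La_n\log p_{n,q}+\Mu_n\sum_{j\ne q}\log(p_{n,q}-p_{n,j})+E_n(p_{n,q})$, and $B_n/\Mu_n$ is \emph{not} bounded --- indeed $\myre(B_n)\asymp|\Mu_n|\log|\Mu_n|$ since $\La_n\sim -q\Mu_n$ and $|p_{n,q}|\asymp|\Mu_n|^{-1/q}$. So ``$z-p\asymp\exp(\phi_n(z)/\res(\om_n,p))$ up to a bounded multiplicative factor'' is false; the $n$-dependence has to be tracked. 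The paper sidesteps this by working in the conformal metric $|\om_n|$: the key is that $\diam_{|\om_n|}\{r'\le|z|\le r\}$ is bounded uniformly in $n$ (because $\om_n\to\om$ there), while $\phi_n$ is a local $|\om_n|$-to-Euclidean isometry. Combining these with the tangency of $\bd W_{n,q}$ to the circle $|z|=r$ gives the required uniform containment in $\DD(0,r')$ with no delicate asymptotics.
\end{itemize}

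So the general route is correct and matches the paper, but to close it you need (a) the canonical-neighborhood/half-plane construction and the curvature estimate in \S\ref{ldfn} to make the branch bookkeeping rigorous, (b) Lemma~\ref{homot}/Corollary~\ref{joo} to justify the backward iteration of the lift, and (c) the $|\om_n|$-metric argument in place of the inversion estimate for part~(ii).
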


The ``tameness'' in the title refers to the uniform convergence in (ii) which precludes the formation of additional homoclinic arcs in the Hausdorff limit of $\ga_n([-\infty,0])$. Observe that $\ga, g(\ga), \ldots, g^{\circ q-1}(\ga)$ form a $q$-cycle of curves landing at $0$ with a well-defined combinatorial rotation number $0 \leq p/q <1$ with $\gcd(p,q)=1$. They are asymptotic to the $q$ repelling directions of the parabolic point $0$ given by the $q$th roots of unity. When $|\la_n|>1$, the curves $\ga_n, g_n(\ga_n), \ldots, g_n^{\circ q-1}(\ga_n)$ landing at $0$ form a $q$-cycle of the same combinatorial rotation number $p/q$. When $|\la_n|<1$, each point of the $q$-cycle $\sO_n$ remains in a definite sector around one of the repelling directions (see \S \ref{ntma}) and is the landing point of the curve whose unperturbed counterpart lands at $0$ asymptotic to that direction. The proof of \thmref{B}, presented in \S \ref{pfla}, is based on a detailed analysis of the lifted dynamics $F_n$ to construct trapping regions for the curves $\ga_n$ to ensure their landing at the anticipated fixed point and to control their behavior in the Hausdorff metric. \vs 

The motivating example of \thmref{B} is, of course, when $g, g_n$ are restrictions of polynomial maps of a given degree $\geq 2$ and $\ga, \ga_n$ are tails of their external rays of a given periodic angle. Another example is when $g, g_n$ are restrictions of maps in the Eremenko-Lyubich class $\mathcal B$ (transcendental entire functions with bounded singular set) and $\ga, \ga_n$ are tails of their hairs. Notice, however, that we have made no assumption on the smoothness  of either $\ga$ or the $\ga_n$. \vs

\begin{figure}[t]
	\centering
  \includegraphics[width=0.85\linewidth]{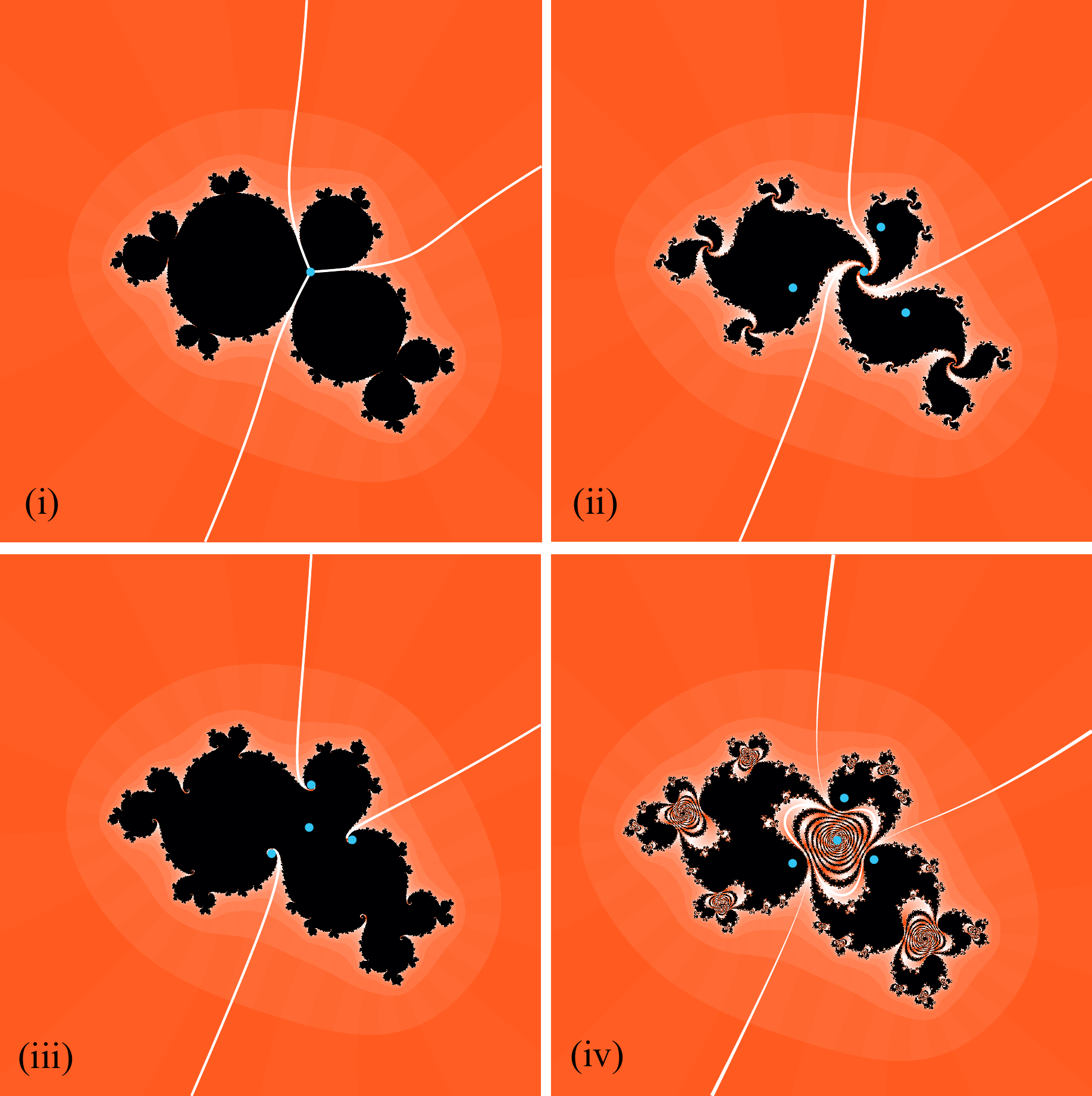}
\caption{\footnotesize Illustration of \corref{nontan}. (i) A cubic polynomial $P_0$ with a parabolic fixed point of multiplier $\exp(2\pi \ii/3)$ at the origin and a cycle of external rays at angles $1/13,3/13,9/13$ landing there. (ii) A perturbation $P_\ve$ with a repelling fixed point at the origin and a nearby attracting $3$-cycle. (iii) A perturbation $P_\ve$ with an attracting fixed point at the origin and a nearby repelling $3$-cycle. In either of cases (ii) or (iii), the repelling orbit of $P_\ve$ captures the cycle of external rays. Assuming the multipliers of the bifurcated cycles of $P_\ve$ tend to $1$ non-tangentially as $\ve \to 0$, these external rays converge in the Hausdorff metric to their counterparts for $P_0$. (iv) Wild behavior of the same external rays for $P_\ve$ when multipliers tend to $1$ tangentially. The Hausdorff limits of these rays will contain a cycle of Hawaiian earrings in the Fatou set of $P_0$.}
\label{poly}
\end{figure}

As a convenient reference, let us restate the polynomial case of \thmref{B} in a more concrete form:
     
\begin{corollary}\label{nontan}
Let $P$ be a polynomial map of degree $d \geq 2$ with connected Julia set. Let $\zeta$ be a non-degenerate parabolic fixed point of $P$, with multiplier a primitive $q$th root of unity, and suppose $\zeta$ is the landing point of a $q$-cycle of external rays $R_P(\theta_1), \ldots, R_P(\theta_q)$. Take a sequence $P_n \to P$ of polynomials of degree $d$ and suppose $P_n$ has a fixed point $\zeta_n$ of multiplier $\lambda_n$ and a $q$-cycle $\sO_n$ of multiplier $\mu_n$ which converge to $\zeta$ as $n \to \infty$. If $\lambda_n^q \to 1$ (equivalently, $\mu_n \to 1$) non-tangentially, then for all sufficiently large $n$ the external rays $R_{P_n}(\theta_1), \ldots, R_{P_n}(\theta_q)$ land at $\zeta_n$ if $|\lambda_n|>1$ and on $\sO_n$ if $|\la_n|<1$. In either case $\ov{R_{P_n}}(\theta_j) \to \ov{R_P}(\theta_j)$ uniformly in the natural Green's potential parametrization, and in particular in the Hausdorff metric.     
\end{corollary}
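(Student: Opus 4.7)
The plan is to realize the tails of the periodic external rays as $f_n$-invariant curves of the type treated in \thmref{B}. Conjugating by a translation we may assume $\zeta=0$, so that locally $P(z)=\la z+O(z^2)$ and $P_n(z)=\la_n z+O(z^2)$. Restricting to a small disk $\DD(0,r_0)$ about the parabolic point places us in the setup of \thmref{B}: $\la$ is a primitive $q$th root of unity, $g_n:=P_n|_{\DD(0,r_0)}\to g:=P|_{\DD(0,r_0)}$ uniformly, $|\la_n|\neq 1$, the parabolic fixed point bifurcates into $\zeta_n$ of multiplier $\la_n$ and the $q$-cycle $\sO_n$ of multiplier $\mu_n$, and $\la_n^q\to 1$ non-tangentially by hypothesis.

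For each $j$, parametrize the external ray via the inverse B\"ottcher coordinate by $\ga_j(t):=\varphi_P^{-1}\bigl(\exp(d^{qt})\,\e^{2\pi\ii\theta_j}\bigr)$, which satisfies $P^{\circ q}(\ga_j(t))=\ga_j(t+1)$ and lands at $\zeta=0$ as $t\to-\infty$. After a harmless time shift, the tail maps $]-\infty,0]$ into $\DD(0,r_0)$, matching the invariant-curve hypothesis of \thmref{B}. Define $\ga_{n,j}$ analogously via $\varphi_{P_n}^{-1}$; these are defined for $n$ large because the B\"ottcher coordinate is stable under perturbation over any fixed compact neighborhood of $\infty$ inside the basin. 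The only hypothesis of \thmref{B} requiring justification is uniform convergence $\ga_{n,j}\to\ga_j$ on the fundamental segment $[-1,0]$. That segment corresponds under the B\"ottcher coordinate to a compact arc in $\{|w|>1\}$; connectedness of $K_P$ makes $\varphi_P$ a conformal isomorphism from $\CC\sm K_P$ onto $\{|w|>1\}$, and coefficient continuity together with the functional equation $\varphi_{P_n}\circ P_n=\varphi_{P_n}^d$ propagates convergence from a small neighborhood of $\infty$ to any compact subset of $\{|w|>1\}$, yielding $\ga_{n,j}\to\ga_j$ uniformly on $[-1,0]$.

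Now apply \thmref{B} to each $j$. Part (i) gives that for $n$ large, $\ga_{n,j}$ lands at a repelling fixed point of $f_n=P_n^{\circ q}$ near $0$; when $|\la_n|>1$ that fixed point is $\zeta_n$, and when $|\la_n|<1$ the repelling fixed points of $f_n$ near $0$ are precisely the points of $\sO_n$, so $\ga_{n,j}$ lands on $\sO_n$. Part (ii) gives $\ga_{n,j}\to\ga_j$ uniformly on the tail $]-\infty,0]$. On the complementary initial portion of the ray, Green's potential stays bounded away from $0$, so uniform convergence follows directly from the B\"ottcher continuity invoked above. Combining the two and adjoining the landing points yields $\ov{R_{P_n}}(\theta_j)\to\ov{R_P}(\theta_j)$ uniformly in the Green's potential parametrization, hence in the Hausdorff metric. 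The genuine work --- uniform taming of the curves as they approach the parabolic point non-tangentially --- is carried out inside \thmref{B}; the content of this corollary is simply the translation between external rays of polynomials and the abstract invariant-curve formalism of \thmref{B}.
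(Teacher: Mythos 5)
Your proposal is correct and follows essentially the same route as the paper, which states the corollary without a dedicated proof but sketches the reduction in the paragraph preceding Theorem~B: parametrize the rays by $t=\log s/\log(d^q)$ (equivalently, your $\varphi^{-1}(\exp(d^{qt})\e^{2\pi\ii\theta_j})$) so that the tails become $P^{\circ q}$-invariant curves landing at the parabolic point, invoke continuity of the B\"ottcher coordinate for uniform convergence on the fundamental segment, apply Theorem~B for the tails, and handle the complementary portion of the rays by B\"ottcher continuity alone. Your write-up simply makes explicit the steps the paper leaves to the reader.
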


Compare \figref{poly}. This is the generalization of a result of J. Milnor in \cite{M2} who proved it in the case $d=2$ when $\lambda_n^q,\mu_n \to 1$ radially (i.e., along the real axis). Some topological and measure-theoretic implications of non-tangential multiplier approach for geometrically finite rational maps are studied by C. McMullen in \cite{Mc}. \vs

It is reasonable to speculate that the assumption of non-tangential multiplier approach in \thmref{B} is also necessary for tameness. We will not pursue this issue in the present paper, but in \S \ref{tma} we use the picture of the lifted dynamics to show that if the invariant curves $\ga_n$ eventually leave the neighborhood of $0$ as $t \to -\infty$ (thus creating a heteroclinic arc in the Hausdorff limit), they must pass through one of the $q$ ``gates'' between $0$ and the $q$-cycle $\sO_n$. This statement, along with special cases of the general theory developed in \cite{PZ1} play a role in our recent work on the cubic connectedness locus \cite{PZ2}. \vs   
 
\noindent
{\bf Acknowledgments.} We thank X. Buff for many conversations on this topic over the years. We acknowledge the influence of A.~Ch\'eritat's presentation in his habilitation thesis \cite{C}, where Buff forms make their first appearance in print. The main application presented in this paper is partially inspired by J. Milnor's study of the external rays of near parabolic maps in the quadratic family in \cite{M2}. The first author would like to thank the Danish Council for Independent Research -- Natural Sciences for support via the grant DFF-1026-00267B. The second author acknowledges the partial support of the Research Foundation of The City University of New York via grant TRADB-54-375.

\section{Holomorphic $1$-forms and vector fields}\label{hvff}

\noindent
Throughout the paper the following notation will be adopted: \vs 

\begin{enumerate}
\item[$\bullet$]
$\DD(p,r):=\{ z \in \CC: |z-p|<r \},\ \DD:=\DD(0,1)$
\item[$\bullet$]
$\HH := \{ z \in \CC: \myim(z)>0 \}$
\item[$\bullet$]
$[z,w]$ is the Euclidean segment between $z,w \in \CC$
\item[$\bullet$]
$n \gg 1$ stands for ``all sufficiently large $n$'' \vs
\end{enumerate}

Consider a bounded domain $U \subset \CC$ and assume for simplicity that the boundary $\bd U$ is a smooth Jordan curve. Let $\chi=\chi(z)\, \frac{\bd}{\bd z}$ be a holomorphic vector field defined in a neighborhood of the closure $\ov{U}$, with $\chi \neq 0$ on $\bd U$. We denote by $p_1, \ldots, p_k$ the zeros of $\chi$ in $U$. The dual $1$-form $\om=\om(z) \, dz$ with $\om(z)=1/\chi(z)$ is meromorphic in $U$ with poles at $p_1, \ldots, p_k$. Both $\chi, \om$ are holomorphic and non-vanishing in the punctured domain $U^*:= U \sm \{ p_1, \ldots, p_k \}$.

\subsection{Rectifying coordinates}\label{rc}

In general $\om$ is not the differential of a single-valued
holomorphic function in $U^*$. But one can easily construct primitives of $\om$ by passing to the universal covering as follows. Take a holomorphic universal covering map $h: (\HH,\zeta_0) \to (U^\ast,z_0)$ and let $\tilde{\om}:=h^\ast \om$. Define $\tilde{\phi}: \HH \to \CC$ by 
$$
Z = \tilde{\phi}(\zeta) := \int_{[\zeta_0,\zeta]} \tilde{\om}(s) \, ds.
$$
Evidently $\tilde{\phi}$ is well-defined and holomorphic with $\tilde{\phi}(\zeta_0)=0$, and  
\begin{equation}\label{Zder}
d\tilde{\phi}= \tilde{\om} \, d\zeta = \om \, dz. 
\end{equation}
As $d\tilde{\phi}/d\zeta \neq 0$ everywhere in $\HH$, $\tilde{\phi}$ is locally biholomorphic. If $\sigma_j \in \Aut(\HH)$ is the deck transformation of $h$ represented by the homotopy class in $\pi_1(U^*,z_0)$ with winding number $1$ around $p_j$ and $0$ around the remaining fixed points, then $\tilde{\phi} \circ \sigma_j = T_j \circ \tilde{\phi}$, where $T_j$ is the translation $Z \mapsto Z + 2\pi \ii \, \res(\om,p_j)$. \vs

In practice, it is convenient to forget about the universal covering $h$ altogether and think of the primitive as a multi-valued holomorphic function in $U^\ast$ that undergoes the monodromy $T_j$ whenever $z$ winds once around $p_j$ counterclockwise. In other words, we have the multi-valued primitive 
$$
Z = \phi(z) := \int_{z_0}^z \om(s) \, ds
$$
whose local branches near $z$ correspond to the choice of homotopy classes of paths in $U^*$ from $z_0$ to $z$. Following the terminology of classical complex analysis, we call each value of $Z$ a {\bit lift} of $z$ under $\phi$. The collection of all local branches of $\phi$ define coordinates for a {\it translation structure} on $U^*$ since any two branches differ by an element of the group generated by the translations $T_1, \ldots, T_k$. \vs

By \eqref{Zder} each branch of $Z=\phi(z)$ transforms the $1$-form $\om$ to $dZ$. Equivalently, it transforms the vector field $\chi$ to the unit vector field $\frac{\bd}{\bd Z}$. In fact, if $t \mapsto z(t)$ is the complex-time parametrization of a solution of $dz/dt = \chi(z(t))$, then  
$$
\frac{d}{dt}(\phi(z(t))) = \frac{d\phi}{dz}(z(t))\, \frac{dz}{dt}(t) = \om(z(t))\, \chi(z(t)) = 1. 
$$
For this reason, we often refer to the branches of $\phi$ as the {\bit rectifying coordinates} for $\om$ or $\chi$.\footnote{Another interpretation for $Z=\phi(z)$ is the ``complex time'' it takes for the flow of $\chi$ to map the initial condition $z_0$ to the point $z$.} 

\subsection{Real-time trajectories of $\chi$}\label{rtt}
 
Under a rectifying coordinate $Z=\phi(z)$ the real-time trajectories of $\chi$ map to horizontal lines in $\CC$. More generally, for any $\alpha \in \CC$ with $|\alpha|=1$ the coordinate $Z=\phi(z)$ transforms the rotated vector field $\alpha \, \chi$ to $\alpha \frac{\bd}{\bd Z}$, so the real-time trajectories of $\alpha \, \chi$ map to straight lines in $\CC$ parallel to $\alpha$. \vs

Here is a basic example: When $\chi=z \, \frac{\bd}{\bd z}$ we have $\phi(z)=\log z$ under which the real-time trajectories of $\chi$ (the radial lines emanating from $0$) map to horizontal lines in $\CC$. For the rotated vector field $\ii \, \chi$, the trajectories (concentric circles centered at $0$) map under $\phi$ to vertical lines in $\CC$. \vs    

Suppose the zero $p=p_j$ has multiplicity $m=m_j \geq 1$ and $c:=\res(\om,p)$. If $m=1$, then $c \neq 0$ and there is a local analytic change of coordinate $w=w(z)$ near $p$, with $w(p)=0$, which transforms $\chi$ to the linear normal form
$$
\chi_0 = \frac{1}{c} w \ \frac{\bd}{\bd w}.
$$  
If $m \geq 2$, then there is a local analytic change of coordinate $w$ which transforms $\chi$ to the normal form  
\begin{equation}\label{m>1}
\chi_0 = \frac{w^m}{1+cw^{m-1}} \ \frac{\bd}{\bd w} 
\end{equation}
(see for example \cite[Theorem 5.25]{IY}). It follows in particular that in either case the Poincar\'e-Hopf index of $\chi$ at $p$ is $m$ and therefore it is always a positive integer. \vs

The vector field $\chi_0$ in \eqref{m>1} has the rectifying coordinate
\begin{equation}\label{phiformula}
\phi_0(w) = -\frac{1}{m-1} \, w^{-(m-1)} + c \log w. 
\end{equation}
It is not hard to see from this explicit formula that there is a neighborhood of $0$ in which every real-time trajectory of $\chi_0$ converges to $0$ in either positive or negative time (or both). More precisely, they converge to $0$ asymptotically in the direction of an $(m-1)$st root of $1$ as $t \to -\infty$, and in the direction of an $(m-1)$st root of $-1$ as $t \to +\infty$ (see \figref{vfp}). By \eqref{m>1} the vector field $\chi_0$ and therefore its phase portrait is invariant under the rotation $w \mapsto \e^{2\pi \ii/(m-1)} w$. It follows from this local picture that any trajectory of $\chi_0$ that accumulates on $0$ in positive or negative time has to converge to $0$. Going back to the $z$-plane, we conclude that if $\ga: \, [0,+\infty[ \to U^*$ is a trajectory of $\chi$ accumulating at $p_j$ in positive time, then $\lim_{t \to +\infty} \ga(t)=p_j$. Similarly, if $\ga: \, ]-\infty,0] \to U^*$ is a trajectory accumulating at $p_j$ in negative time, then $\lim_{t \to -\infty} \ga(t)=p_j$. \vs

\begin{figure}[t]
	\centering
\begin{minipage}{0.32\textwidth}
  \centering
  \includegraphics[width=1.1\linewidth]{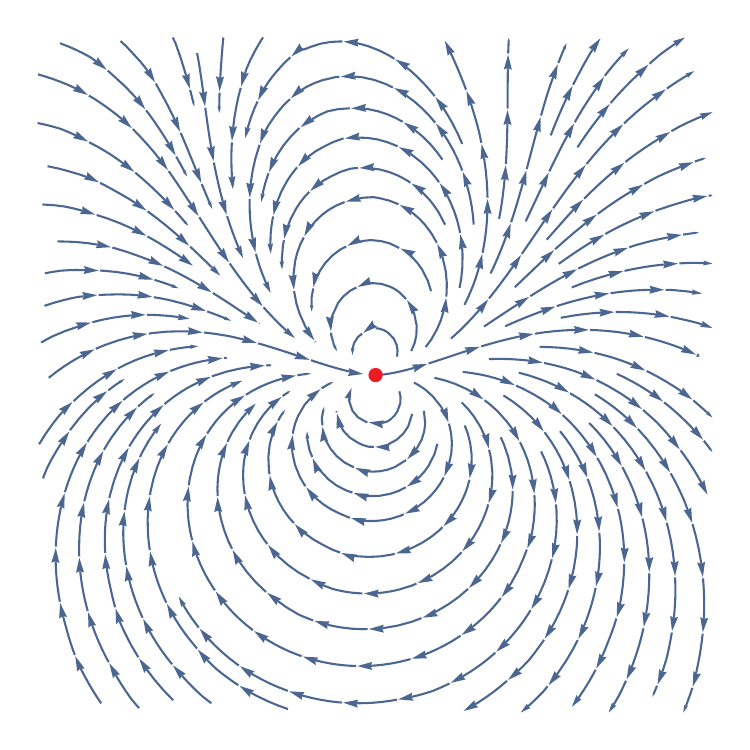}
\end{minipage}
\begin{minipage}{0.32\textwidth}
  \centering
  \includegraphics[width=1.1\linewidth]{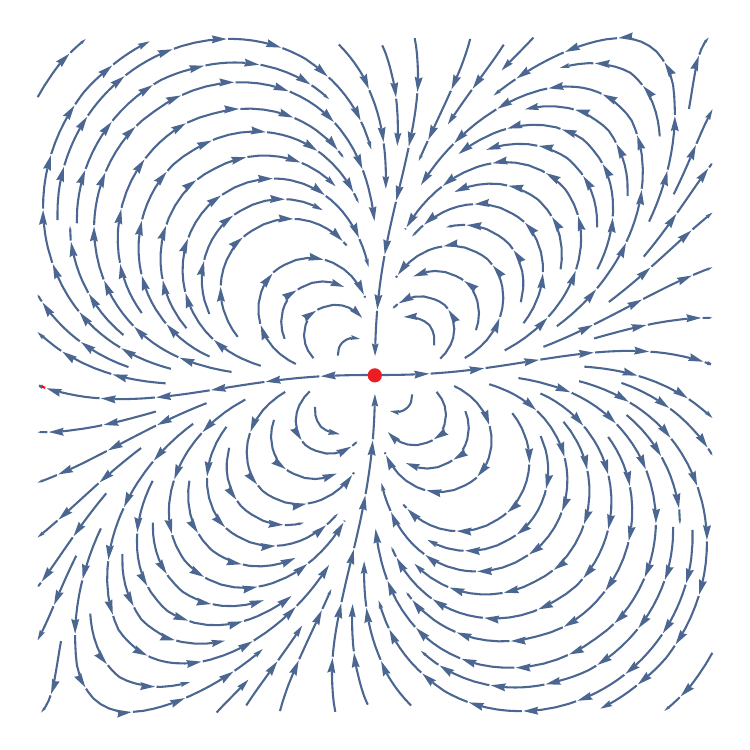}
\end{minipage}	
\begin{minipage}{0.32\textwidth}
  \centering
  \includegraphics[width=1.1\linewidth]{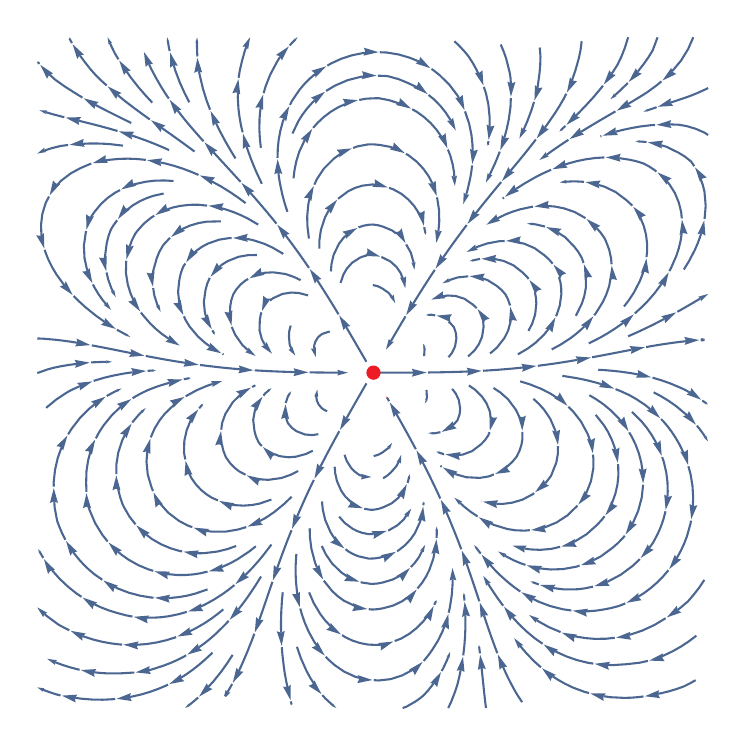}
\end{minipage}	
\caption{\footnotesize Real-time trajectories of the vector field $w^m/(1+cw^{m-1}) \, \frac{\bd}{\bd w}$ near the singular point at $w=0$, where $c=0.2+i$. From left to right we have shown the cases $m=2,3,4$. Each phase portrait is invariant under rotation by $1/(m-1)$ of a turn around $0$.}
\label{vfp}
\end{figure}

One final remark: Suppose $\ga : \, ]a,b[ \to U^*$ is a maximally extended trajectory of $\chi$. If $a \neq -\infty$, then $\lim_{t \to a^+} \ga(t)$ exists and belongs to $\bd U$ and $\ga$ extends to a trajectory of $\chi$ in some neighborhood of $a$. Similarly, if $b \neq +\infty$, then $\lim_{t \to b^-} \ga(t)$ exists and belongs to $\bd U$ and $\ga$ extends to a trajectory of $\chi$ in some neighborhood of $b$. These statements follow easily from our assumption that $\chi \neq 0$ on $\bd U$.

\subsection{Metric and curvature} \label{ometric}

The line element $|\om|=|\omega(z)| \, |dz|$ defines a conformal metric in $U^*$ which is Euclidean in the sense that its Gaussian curvature is identically $0$. Each rectifying coordinate $\phi$ is a local isometry between $(U^*, |\om|)$ and $(\CC, |dZ|)$. In particular, the real-time trajectories of $\chi$ are $|\om|$-geodesics in $U^*$. More generally, for any $\alpha \in \CC$ with $|\alpha|=1$ the real-time trajectories of $\alpha \chi$ are $|\om|$-geodesics in $U^*$. \vs

The metric $|\om(z)| \, |dz|$ is not complete since $\bd U$ is at finite distance from every point in $U^*$. However, the singularities $p_1, \ldots, p_k$ are infinitely far from any point of $U^*$. This follows from the fact that $|\om(z)| \asymp |z-p_j|^{-m_j}$ for $z$ near $p_j$, where $m_j \geq 1$ is the multiplicity of $p_j$ as a zero of $\chi$. \vs

For future reference in \S \ref{ldfn}, let us calculate the $|\om|$-curvature of a positively oriented circle $z(t)=r\e^{\ii t}$ assuming this circle is contained in $U^*$. In a rectifying coordinate $\phi$, this is the same as the Euclidean curvature $\kappa=\kappa(t)$ of the image curve $Z(t):=\phi(z(t))$. The tangent vector $v=v(t)$ to $Z(t)$ can be expressed as  
$$
v = \frac{dZ}{dt} = \frac{d\phi}{dz} \ \frac{dz}{dt} = \ii \, z \om(z).
$$
Hence,
\begin{align} \label{kappa} 
\kappa & = \frac{d \arg(v)}{d(\text{arc-length})} = \frac{1}{|v|} \, \frac{d \arg(v)}{dt} = \frac{1}{|z \om(z)|} \, \frac{d}{dt}(\myim(\log (\ii \, z \om(z))))  \notag \\
& = \frac{1}{|z \om(z)|} \, \myim \Big[ \frac{d}{dt}(\log (\ii \, z \om(z))) \Big] = \frac{1}{|z \om(z)|} \, \myim \Big[ \frac{\bd}{\bd z}(\log (\ii \, z \om(z))) \ \frac{dz}{dt} \Big] \notag \\
& =  \frac{1}{|z \om(z)|} \, \myim \Big[ \frac{\om(z)+z\om'(z)}{z\om(z)} \cdot \ii \, z \Big] = \frac{1}{|z \om(z)|} \, \myre \Big[ 1 + \frac{z\om'(z)}{\om(z)} \Big]. \vs 
\end{align}

\subsection{Periodic orbits of $\chi$}

Let $\ga$ be a closed trajectory of $\chi$ in $U$ of period $\tau>0$. By the Poincar\'e-Hopf theorem the Jordan domain bounded by $\ga$ contains a unique singularity $p$ of index (=multiplicity) $1$, where $\res(\om,p)=\tau/(2\pi \ii)$ is purely imaginary. The time-$\tau$ flow of $\chi$ is a holomorphic function which restricts to the identity map on $\ga$, hence in the maximal neighborhood $W \subset U$ of $\ga$ in which it is defined. It is not hard to check that $W$ is a topological disk and $W \sm  \{ p \}$ is foliated by closed trajectories of period $\tau$ (compare \cite[\S 9]{St} for a similar situation in the context of quadratic differentials). Any local branch of the rectifying coordinate $Z=\phi(z)$ can be continued analytically in $W \sm  \{ p \}$ which yields a multi-valued function onto a half-plane of form $\{ Z : \myim(Z)>Y_0 \}$. The inverse  
$$
\psi:=\phi^{-1}: \{ Z : \myim(Z)>Y_0 \} \to W \sm  \{ p \} 
$$
defines an infinite cyclic holomorphic covering map with the deck group generated by the translation $Z \mapsto Z+\tau$. The obstruction to enlarging $W$ further can be only $\bd W$ meeting the boundary $\bd U$ or a singularity in $U$. The latter cannot happen since all points on the closed trajectory $\psi ( \{ Z: \myim(Z)=Y_0+1 \})$ are at $|\om|$-distance $1$ to $\bd W$ and $+\infty$ to the singular points in $U$ (see \S \ref{ometric}). Our standing assumption that $\chi$ is defined in a neighborhood of $\ov{U}$ and is non-zero on $\bd U$ implies that $\bd W$ itself is a closed trajectory of $\chi$ of the same period $\tau$. It follows that the smooth Jordan curves $\bd W$ and $\bd U$ intersect and they are tangent to each other wherever they meet.  \vs

Conversely, suppose $p$ is a singular point of $\chi$ in $U$ with multiplicity $m=1$ such that $c=\res(\om,p) \neq 0$ is purely imaginary. Then by the discussion in \S \ref{rtt} there is a neighborhood of $p$ in $U$ in which $\chi$ is analytically transformed to $(w/c) \frac{\bd}{\bd w}$, so the real-time trajectories of $\chi$ are closed with period $\tau=2\pi \ii \, c$. By the preceding paragraph there is a largest flow-invariant Jordan domain neighborhood $W \subset U$ of $p$ such that $W \sm \{ p \}$ is foliated by closed trajectories of the same period $\tau$. Moreover, $\bd W \cap \bd U \neq \es$ and the two Jordan curves are tangent wherever they meet. We call $W$ the {\bit canonical neighborhood} of $p$ for the vector field $\chi$. Notice that this neighborhood depends on the choice of $U$.

\begin{remark}
As a consequence of the preceding remarks, we see that {\it a holomorphic vector field in $U$ can never have a limit cycle}. It follows from the Poincar\'e-Bendixson theorem \cite{HSD} that every trajectory $\ga : \, ]a,+\infty[ \to U^*$ is either periodic, or $\lim_{t \to +\infty} \ga(t)$ is a singular point. A similar statement holds for every trajectory $\ga : \, ]-\infty,b[ \to U^*$. 
\end{remark}

\subsection{Rotations of vector fields}

Suppose all singularities $p_1, \ldots, p_k$ of $\chi$ in $U$ have multiplicity $1$, so $c_j:=\res(\om,p_j)$ is non-zero for all $1 \leq j \leq k$. Consider the unimodular complex numbers (or unit vectors) 
$$
\alpha_j:= \ii \, \frac{c_j}{|c_j|} \qquad (1 \leq j \leq k).
$$ 
The rotated vector field $\alpha_j \chi$ has a purely imaginary residue at $p_j$, so there is a canonical neighborhood $W_j \subset U$ of $p_j$ for the vector field $\alpha_j \, \chi$ in the sense described above. Each trajectory of $\alpha_j \, \chi$ in $\ov{W_j} \sm \{ p_j \}$ is closed of period $2\pi|c_j|$ and under a rectifying coordinate $Z=\phi(z)$ of $\chi$ they all map to straight lines parallel to $\alpha_j$. There is a $Y_j \in \RR$ for which the inverse $\psi=\phi^{-1} : \{ Z: \myim(Z/\alpha_j) > Y_j \} \to W_j \sm \{ p_j \}$ is an infinite cyclic covering map whose deck group is generated by the translation $Z \mapsto Z+2\pi \ii \, c_j$.             

\begin{lemma}\label{disjoint}
Let $p_i, p_j$ be distinct singularities of $\chi$ in $U$ and $\gamma_i \subset \ov{W_i}$ and $\gamma_j \subset \ov{W_j}$ be closed trajectories of the corresponding rotated fields $\alpha_i \chi$ and $\alpha_j \chi$. Then $\gamma_i \cap \gamma_j = \es$.  
\end{lemma}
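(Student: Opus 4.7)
The plan is to argue by contradiction, splitting on whether the rotation phases $\alpha_i$ and $\alpha_j$ agree. The starting observation is that the canonical neighborhood $W_\ell$ of $p_\ell$ contains no singularity of $\chi$ other than $p_\ell$ itself — if it did, that singularity would be a fixed point of the flow sitting on a closed periodic orbit of $\alpha_\ell \chi$, which is impossible. Consequently each closed trajectory $\gamma_\ell \subset \overline{W_\ell}$ bounds a topological sub-disk $D_\ell \subset \overline{W_\ell}$ whose only singularity is $p_\ell$, and in particular $p_j \notin \overline{D_i}$ and $p_i \notin \overline{D_j}$.

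Suppose $\gamma_i \cap \gamma_j \neq \es$. If $\alpha_i = \alpha_j$, then $\gamma_i$ and $\gamma_j$ are integral curves of the same holomorphic vector field, and any common point forces $\gamma_i = \gamma_j$ by the ODE uniqueness theorem. The Jordan curve $\gamma_i = \gamma_j$ bounds a unique disk in $\CC$, namely $D_i = D_j$, which by the first paragraph would then contain both $p_i$ and $p_j$, a contradiction. So it remains to handle the transverse case $\alpha_i \neq \alpha_j$, in which the tangents to $\gamma_i$ and $\gamma_j$ at every common point are $\alpha_i \chi$ and $\alpha_j \chi$, hence distinct nonzero vectors; so the intersection $\gamma_i \cap \gamma_j$ is finite and every crossing is transverse. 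Since $\gamma_j$ loops around $p_j \notin \overline{D_i}$ while entering $D_i$ near the assumed common point, it must cross $\gamma_i$ at least twice. Pick two consecutive crossings $z_0, z_1$ along $\gamma_j$ so that the open sub-arc $\beta \subset \gamma_j$ between them lies in $D_i$, and of the two arcs of $\gamma_i$ joining $z_0$ to $z_1$ choose the arc $\beta'$ so that the sub-disk $\Omega \subset D_i$ bounded by $\beta \cup \beta'$ excludes $p_i$. Then $\overline{\Omega}$ is a closed topological disk on which $\om$ is holomorphic and non-vanishing, and a single-valued branch of the rectifying coordinate $\phi$ exists there.

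The punch line is a flat Gauss--Bonnet computation for the metric $|\om|$ on the $2$-gon $\overline{\Omega}$. The boundary pieces $\beta$ and $\beta'$ are real-time trajectories of $\alpha_j \chi$ and $\alpha_i \chi$, hence $|\om|$-geodesics with vanishing geodesic curvature, and the ambient Gaussian curvature is identically zero (\S \ref{ometric}); so the turning sum of the oriented boundary equals $2\pi$. With corners only at $z_0$ and $z_1$ this reduces to $(\pi - \theta_0) + (\pi - \theta_1) = 2\pi$, where $\theta_0, \theta_1 \in \,]0,2\pi[$ are the interior angles of $\Omega$. Thus $\theta_0 + \theta_1 = 0$, which is incompatible with transversality of the crossings, giving the required contradiction. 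The main obstacle I anticipate is the topological bookkeeping in the transverse case — ensuring that $\beta$ and $\beta'$ can be chosen so that $\overline{\Omega}$ is a genuine disk avoiding every singularity of $\chi$ — and this rests squarely on the fact that $D_i$ contains no singularity besides $p_i$. Once this geometric scaffolding is in place, the flat Gauss--Bonnet step is immediate.
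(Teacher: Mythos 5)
Your dichotomy ``$\alpha_i = \alpha_j$ vs.\ $\alpha_i \neq \alpha_j$'' misses a case, and the transversality claim in the second branch is false as stated. The inference ``the tangent vectors $\alpha_i \chi(z)$ and $\alpha_j\chi(z)$ are distinct nonzero vectors, hence the crossing is transverse'' breaks down when $\alpha_i = -\alpha_j$: there the two vectors are distinct but anti-parallel, so the curves are tangent rather than transverse at a common point, the interior angles of your putative bigon can degenerate to $0$, and the Gauss--Bonnet contradiction ($\theta_0+\theta_1=0$ vs.\ $\theta_0,\theta_1>0$) evaporates. The fix is easy and of the same flavor as your $\alpha_i = \alpha_j$ case: if $\alpha_i = -\alpha_j$, reversing the time orientation of $\gamma_j$ makes it a trajectory of $\alpha_i\chi$, so ODE uniqueness forces $\gamma_i = \gamma_j$ as sets and the same $D_i = D_j$ contradiction applies. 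With this patch the proof is complete: for $\alpha_i \neq \pm\alpha_j$ the crossings really are transverse and your bigon/Gauss--Bonnet step goes through.

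Once fixed, your route is genuinely different from the paper's. The paper likewise disposes of $\alpha_i = \pm\alpha_j$ via ODE uniqueness (phrased as ``the curves cannot be tangent''), but for the transverse case it avoids both Gauss--Bonnet and the topological bookkeeping about which component of $\DD(0,r)\setminus\gamma_i$ contains $p_i$. Instead it observes that the orientation sign of a transverse crossing equals $\operatorname{sign}\,\myim(\alpha_j/\alpha_i)$, a constant independent of the crossing point since $\chi$ rescales both tangent vectors by the same nonzero factor, whereas two Jordan curves crossing transversally must do so with both orientations. Your version trades that one-line orientation count for a pleasant geometric picture (a flat metric has no geodesic bigon), at the modest cost of first establishing that $\ov{W_\ell}$ contains no singularity other than $p_\ell$ --- a fact the paper's argument never needs.
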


\begin{proof}
First we note that $\gamma_i, \gamma_j$ cannot be tangent to each other at any point. If this were to happen at some point $z$, we would have $\alpha_i=\alpha_j$ or $\alpha_i=-\alpha_j$. In the first case $\gamma_i, \gamma_j$ would be two different trajectories of the same vector field $\alpha_i \, \chi$ passing through $z$, which is impossible by the uniqueness of solutions of differential equations. In the second case, $\gamma_i$ and the reverse curve $t \mapsto \gamma_j(-t)$ would be two different trajectories of $\alpha_i \, \chi$ passing through $z$, which is again impossible. Thus, $\gamma_i$ and $\gamma_j$ can only intersect transversally. Suppose they meet transversally at some $z$ where $\gamma_i$ crosses $\gamma_j$ from, say, left to right in the sense that the pair $(\alpha_i \chi(z), \alpha_j \chi(z))$ forms a positive basis for $\RR^2$. Since $\gamma_i, \gamma_j$ are Jordan curves, they must meet transversally at another point $z'$ where $\gamma_i$ crosses $\gamma_j$ from right to left in the sense that $(\alpha_i  \chi(z), \alpha_j \chi(z))$ forms a negative basis for $\RR^2$. Looking at the pair of vectors at $z$ gives $\myim(\alpha_j/\alpha_i)>0$ while at $z'$ we obtain $\myim(\alpha_j/\alpha_i)<0$, a contradiction.
\end{proof}

\begin{corollary}\label{jcs}
Suppose all singularities $p_1, \ldots, p_k$ of $\chi$ in $U$ have multiplicity $1$. Consider the unimodular complex numbers $\alpha_1, \ldots, \alpha_k$ as above and let $W_j \subset U$ be the canonical neighborhood of $p_j$ for the rotated vector field $\alpha_j \, \chi$. Then the closures $\ov{W_1}, \ldots, \ov{W_k}$ are pairwise disjoint, while $\bd W_j$ meets $\bd U$ tangentially for all $1 \leq j \leq k$.
\end{corollary}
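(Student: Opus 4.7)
My plan is to deduce both assertions of the corollary directly from \lemref{disjoint} together with the general description of canonical neighborhoods given in the preceding subsection on periodic orbits.

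The tangency claim is essentially already in hand. Since $p_j$ is a simple singularity of $\chi$ with residue $c_j \neq 0$, the rotated residue $\alpha_j c_j = \ii\, |c_j|$ is purely imaginary, so $p_j$ is a simple singularity of $\alpha_j \chi$ whose surrounding real-time trajectories are closed of period $2\pi |c_j|$. The construction of the canonical neighborhood in the preceding subsection applies with $\chi$ replaced by $\alpha_j \chi$, producing $W_j$, and the very same discussion yields that $\bd W_j$ is itself a closed trajectory of $\alpha_j \chi$ which meets $\bd U$, with the two smooth Jordan curves tangent wherever they intersect.

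For the disjointness I would argue by contradiction. Assume $i \neq j$ and pick $z \in \ov{W_i} \cap \ov{W_j}$, then split on whether $z$ is a singularity of $\chi$. If $z$ is nonsingular, then the foliation of $\ov{W_i} \sm \{p_i\}$ by closed trajectories of $\alpha_i \chi$ furnishes a closed trajectory $\gamma_i \subset \ov{W_i}$ through $z$, and likewise a closed trajectory $\gamma_j \subset \ov{W_j}$ of $\alpha_j \chi$ through $z$. These two curves meet at $z$, directly contradicting \lemref{disjoint}. If instead $z$ is singular, then because each $\ov{W_k}$ contains exactly one singularity of $\chi$, namely $p_k$ (its interior is foliated by closed, hence nonsingular, trajectories of $\alpha_k \chi$, and $\bd W_k$ is itself a closed trajectory), we are forced into $z = p_i = p_j$, contradicting $i \neq j$.

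The only real subtlety is the verification in the second case that $\ov{W_k}$ harbors no singularity other than $p_k$; once this is noted, both parts of the corollary follow essentially for free. I do not anticipate any further obstacles.
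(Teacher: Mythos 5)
Your proposal is correct and follows exactly the path the paper intends: the tangency assertion is a direct restatement of the conclusion of the ``canonical neighborhood'' construction applied to $\alpha_j\chi$, and disjointness of the closures is deduced by applying \lemref{disjoint} to the closed trajectories (of $\alpha_i\chi$, $\alpha_j\chi$) through a hypothetical common point, with the separate observation that $p_k$ is the only singularity in $\ov{W_k}$ handling the degenerate case. The paper states this as an immediate corollary without further argument, and your fleshed-out version supplies precisely the missing details.
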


\section{Buff forms and their basic properties}\label{bfbp}

\subsection{Holomorphic index and r\'esidu iteratif}\label{hiri} 

We begin by briefly recalling some basic definitions; for details we refer to the paper of X. Buff and A. Epstein \cite{BE} (see also \cite{M1} and \cite{C}). Let $p$ be an isolated fixed point of a holomorphic map $f$, so $f(z)=p + \lambda (z-p) + O((z-p)^2)$ in some neighborhood of $p$. Here $\lambda:=f'(p)$ is the {\bit multiplier} of the fixed point $p$. The {\bit holomorphic index} of $f$ at $p$ is defined by   
$$
\iota(f,p) := \res \Big(\frac{dz}{z-f(z)}, p \Big) = \int_{|z-p|=\ve} \frac{dz}{z-f(z)},   
$$ 
which is invariant under analytic change of coordinates. It is easy to check that   
\begin{equation}\label{eq:index}
\iota(f,p) = \frac{1}{1-\lambda} \qquad \text{if} \ \lambda \neq 1.
\end{equation}
A closely related quantity is the {\bit r\'esidu it\'eratif} of $f$ at $p$ defined by
$$
\resit(f,p) := \frac{m}{2} - \iota(f,p),  
$$ 
where $m \geq 1$ is the fixed point multiplicity of $p$, i.e., the multiplicity of $p$ as a root of $z-f(z)=0$. In particular,
\begin{equation}\label{eq:resit}
\resit(f,p)=\frac{1}{2} - \frac{1}{1-\lambda} \qquad \text{if} \ \lambda \neq 1.  
\end{equation}
Thus, a simple fixed point $p$ is repelling or attracting according as $\resit(f,p)$ has positive or negative real part. \vs

When $\lambda=1$ the formulas \eqref{eq:index} and \eqref{eq:resit} for the holomorphic index and r\'esidu it\'eratif are no longer valid. To calculate them in this case, suppose $p$ has multiplier $1$ and fixed point multiplicity $m \geq 2$. Then there is an analytic change of coordinates in which $f$ has the local normal form
$$
f(z)=z+a(z-p)^m+b(z-p)^{2m-1}+O((z-p)^{2m})
$$ 
with $a,b \in \CC$ and $a \neq 0$. A straightforward computation then shows that 
\begin{equation}\label{resitcomp}
\iota(f,p)=\frac{b}{a^2}, \qquad \text{so} \qquad \resit(f,p)=\frac{m}{2}-\frac{b}{a^2}. 
\end{equation}
For a generic perturbation $f_\ve$ of $f$ the parabolic fixed point $p$ splits into $m$ simple fixed points $p_1(\ve), \ldots,  p_m(\ve)$ of multipliers $\lambda_1(\ve), \ldots, \lambda_m(\ve)$. Continuity of the holomorphic index then shows that 
$\lim_{\ve \to 0} \sum_{j=1}^m 1/(1-\lambda_j(\ve)) = \iota(f,p)$, or
\begin{equation}\label{resitcont}
\lim_{\ve \to 0} \sum_{j=1}^m \resit(f_\ve,p_j(\ve)) = \resit(f,p).
\end{equation}

When $p$ is parabolic with multiplier $\lambda$ a primitive $q$th root of unity, we can apply the preceding remarks to the iterate $f^{\circ q}$. In this case the multiplicity of $p$ as a fixed point of $f^{\circ q}$ is necessarily of the form $m=\nu q+1$ for some integer $\nu \geq 1$ called the {\bit degeneracy order} of $p$, the case $\nu=1$ being considered non-degenerate. Geometrically, there are $m-1=\nu q$ immediate parabolic basins of $f$ attached to $p$, and these fall into $\nu$ disjoint cycles of length $q$. 

\subsection{Buff forms and their residues}
 
For simplicity, we will make the following standing assumptions for the remainder of this section: 
\begin{enumerate}
\item[$\bullet$]
$U \subset \CC$ is a convex domain bounded by a smooth Jordan curve (in our applications $U$ can be taken to be a Euclidean disk). 
\item[$\bullet$] 
$f$ is holomorphic in a neighborhood of $\ov{U}$ and $\myre(f')>0$ in $U$. 
\item[$\bullet$]
$f(z) \neq z$ for all $z \in \bd U$. 
\end{enumerate} 
Define the {\bit Buff form} of $f$ by
$$
\omega_f = \omega_f(z) \, dz := \frac{f'(z)-1}{(f(z)-z)\Log f'(z)} \, dz.
$$
Here $\Log : \CC \sm ]-\infty,0] \to \{ x+iy : |y|<\pi \}$ is the principal branch of the logarithm. The $1$-form $\omega_f$ is non-vanishing and meromorphic in $U$ with finitely many poles at the fixed points of $f$. The dual point of view is provided by the {\bit Buff vector field} 
$$
\chi_f = \chi_f(z) \, \frac{\bd}{\bd z} := \frac{(f(z)-z)\Log f'(z)}{f'(z)-1} \, \frac{\bd}{\bd z}
$$
which is holomorphic in $U$ with finitely many zeros at the fixed points of $f$. 

\begin{lemma}\label{res}
For every fixed point $p=f(p) \in U$, 
$$
\res(\omega_f,p) = \begin{cases} \dfrac{1}{\Log f'(p)} & \quad \text{if} \ \ f'(p) \neq 1 \vs \\ \resit(f,p) & \quad \text{if} \ \ f'(p)=1. \end{cases}
$$
\end{lemma}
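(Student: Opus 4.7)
The key observation is the factorization
$$\omega_f(z) = \frac{g(z)}{f(z)-z}, \qquad \text{where} \qquad g(z) := \frac{f'(z)-1}{\Log f'(z)},$$
which reduces the computation of $\res(\omega_f,p)$ to an analysis of $g$ at $p$ combined with the known pole structure of $1/(f(z)-z)$.

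If $f'(p) \neq 1$, then $g$ is holomorphic at $p$ with $g(p) = (f'(p)-1)/\Log f'(p)$, while $1/(f(z)-z)$ has a simple pole with residue $1/(f'(p)-1)$. Multiplying yields $\res(\omega_f,p) = 1/\Log f'(p)$ at once.

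The substantive case is $f'(p) = 1$. Here the classical expansion $\Log(1+w)/w = 1 - w/2 + w^2/3 - \ldots$ shows that the apparent singularity of $g$ at $p$ is removable with $g(p)=1$, and moreover $g(z) - 1 = (f'(z)-1)/2 + O((f'(z)-1)^2)$. I would then split
$$\omega_f = \frac{1}{f-\id} + \frac{g-1}{f-\id}.$$
The first summand contributes $-\iota(f,p)$ to the residue by the very definition of the holomorphic index (note the sign: $\iota(f,p) = \res(dz/(z-f(z)),p)$). For the second, use the normal form $f(z)-z = a(z-p)^m(1 + O(z-p))$ with $a \neq 0$ and $m \geq 2$ given in \S\ref{hiri}, so that $f'(z)-1 = ma(z-p)^{m-1}(1 + O(z-p))$ and hence $g(z)-1 = (ma/2)(z-p)^{m-1}(1 + O(z-p))$. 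Dividing gives $(g-1)/(f-\id) = (m/2)(z-p)^{-1}(1 + O(z-p))$, which contributes residue $m/2$. Summing, $\res(\omega_f,p) = m/2 - \iota(f,p) = \resit(f,p)$ by definition.

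The only point requiring care is verifying that the omitted $O((f'(z)-1)^2)$ term in the expansion of $g(z)-1$ makes no contribution to the residue: since $(f'(z)-1)^2 = O((z-p)^{2m-2})$ with $2m-2 \geq m$ for $m \geq 2$, dividing it by $f(z)-z = O((z-p)^m)$ produces a function holomorphic at $p$. No conceptual obstacle appears; the argument amounts to identifying the first one or two non-zero coefficients in the Laurent expansion of $\omega_f$ at $p$, using nothing beyond the power-series definition of $\Log(1+w)$ and the chosen normal form of $f$ at a parabolic fixed point.
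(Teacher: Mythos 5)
Your proof is correct and follows essentially the same route as the paper. The paper writes $\omega_f = \Delta'/(\Delta\Log(1+\Delta'))$ with $\Delta = f - \mathrm{id}$, uses the Taylor expansion of $w/\Log(1+w)$, and reads off $\res(\omega_f,0) = \res(1/\Delta,0) + \tfrac{1}{2}\res(\Delta'/\Delta,0) = -\iota(f,0) + m/2$ after discarding holomorphic terms; your factorization $\omega_f = g/(f-\mathrm{id})$ with $g = (f'-1)/\Log f'$ and the split $g = 1 + (g-1)$ is the same bookkeeping in slightly different notation.
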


\begin{proof}
This can be found in \cite[\S 1.3]{C}. For convenience we give a short proof here; this will also provide local expansions for later use. Let $p=0$ and $\lambda:=f'(0)$. If $\lambda \neq 1$, then 
$$
\omega_f(z)=\frac{(\lambda-1)+O(z)}{((\lambda-1)z+O(z^2)) \Log(\lambda+O(z))} = \frac{1}{z \Log \lambda}+O(1),  
$$ 
which shows $\res(\omega_f,0)=1/\Log \lambda$. Suppose now that $0$ has multiplier $\lambda=1$ and fixed point multiplicity $m=q+1 \geq 2$, so 
\begin{equation}\label{EP}
f(z)=z+\De(z) \quad \text{where} \quad \De(z)=az^{q+1}+O(z^{q+2}) \quad \text{with} \quad a \neq 0. 
\end{equation}
Using the Taylor expansion 
$$
\frac{z}{\Log(1+z)}=1+\frac{1}{2} z - \frac{1}{12} z^2 + O(z^3) \qquad \text{as} \ z \to 0,
$$
we obtain the following expression for the Buff form of $f$:
\begin{equation}\label{g}
\omega_f (z) = \frac{\De'(z)}{\De(z) \log(1+\De'(z))} = \frac{1+\frac{1}{2}\De'(z)-\frac{1}{12}\De'(z)^2}{\De(z)} + O(z^{2q-1}). 
\end{equation} 
Since the term $\De'(z)^2/\De(z)$ is holomorphic near $0$, we conclude that 
\begin{align*}
\res(\omega_f,0) & = \res \Big( \frac{1}{\De},0 \Big) + \frac{1}{2} \res \Big( \frac{\De'}{\De},0 \Big) \\
& = -\iota(f,0) + \frac{q+1}{2} = \resit(f,0). \qedhere 
\end{align*}
\end{proof}

\begin{remark}
The Buff form is not invariant under local analytic conjugation, but it is asymptotically invariant. More precisely, if $w=h(z)$ is a local analytic change of coordinates near $0$ which conjugates the parabolic germs  
$$
f(z)=z+az^{q+1}+O(z^{q+2}) \quad \text{and} \quad g(w)=w+bw^{q+1}+O(w^{q+2}) \quad (a,b \neq 0)
$$
in the sense that $g \circ h = h \circ f$, then $
h^*(\omega_g)(z) - \omega_f(z) = O(z^q)$. 
\end{remark}

\subsection{The lifted dynamics}
When suitably lifted under the rectifying coordinate $\phi_f$ of the Buff form $\om_f$, the dynamics of $f$ becomes a new dynamics $F$ with a rather simple description. To make this idea precise, we start with the following lemma which asserts that under mild conditions the Euclidean segment $[z,f(z)]$ does not contain any fixed points of $f$. Set $V:=U \cap f^{-1}(U)$, $U^\ast := \{ z \in U: f(z) \neq z \}$, and $V^\ast := \{ z \in V: f(z) \neq z \}$.   
 
\begin{lemma}\label{nofp}
If $z \in V^\ast$ then $[z,f(z)] \subset U^\ast$. 
\end{lemma}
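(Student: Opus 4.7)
The plan is to exploit the standing hypothesis $\myre(f')>0$ on $U$ together with convexity of $U$ to directly rule out fixed points on the segment. First, since $U$ is convex and $z\in V^\ast\subset U$ while $f(z)\in U$ (because $z\in f^{-1}(U)$), the entire segment $[z,f(z)]$ lies in $U$. Thus the content of the lemma reduces to showing that $f(w)\neq w$ for every $w$ on the segment.

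I would parametrize the segment by $\zeta(s):=(1-s)z+sf(z)$, $s\in[0,1]$, and examine the holomorphic displacement
$$\psi(s):=f(\zeta(s))-\zeta(s).$$
Since $\psi(0)=f(z)-z\neq 0$, I can factor $\psi(s)=(f(z)-z)\theta(s)$ where $\theta(0)=1$. Differentiating, $\theta'(s)=f'(\zeta(s))-1$, so integrating from $0$ gives
$$\theta(s)=(1-s)+\int_0^s f'(\zeta(u))\,du.$$
Because $\zeta(u)\in U$ for every $u\in[0,1]$, the hypothesis $\myre(f')>0$ on $U$ yields
$$\myre\theta(s)=(1-s)+\int_0^s \myre f'(\zeta(u))\,du>0\qquad(0\le s\le 1),$$
the strict inequality holding at $s=1$ because the integrand is continuous and positive. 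Hence $\theta(s)\neq 0$, so $\psi(s)\neq 0$, for every $s\in[0,1]$, which is exactly the assertion $[z,f(z)]\subset U^\ast$.

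There is essentially no obstacle here: the proof is a one-line consequence of the half-plane condition on $f'$, once one notices that the correct quantity to track is the normalized displacement $\theta$ rather than $\psi$ itself. The role of convexity is purely to keep $\zeta(s)$ inside $U$ so that the bound on $\myre f'$ applies, and the role of $\myre(f')>0$ is to prevent $\theta$ from winding around the origin as $s$ traverses $[0,1]$.
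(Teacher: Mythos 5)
Your argument is correct and is essentially the paper's own proof: both factor the displacement $f(z_s)-z_s$ as $(f(z)-z)$ times the quantity $1+\int_0^s\bigl(f'(z_u)-1\bigr)\,du$ and then observe that the hypothesis $\myre(f')>0$ forces this factor to have positive real part, hence to be nonzero. The only cosmetic difference is that you track $\myre\theta(s)$ directly while the paper bounds $|f(z_s)-z_s|$ from below by the same real part.
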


\begin{proof}
We have $[z,f(z)] \subset U$ by convexity. Take the parametrization of $[z,f(z)]$ given by $z_t:=(1-t)z+tf(z)$ for $0 \leq t \leq 1$. Let $m :=\min_{[z,f(z)]} \myre(f') >0$. Then, for any $0 \leq a \leq 1$,  
\begin{align*}
|f(z_a)-z_a| & = \left| (f(z)-z)+ \int_{[z,z_a]} (f'(\zeta)-1) \, d\zeta \right| =  |f(z)-z| \ \left| 1+ \int_0^a (f'(z_t)-1) \, dt \right| \\
& \geq |f(z)-z| \left( 1+ \int_0^{a} \myre(f'(z_t)-1) \, dt \right) \\
& \geq |f(z)-z| \ ( 1+ a (m-1)) = |f(z)-z| \ (a m + 1-a) >0. \qedhere 
\end{align*}
\end{proof}

To construct the lift of $f$ carefully, we once again introduce a universal covering map $h: (\HH,\zeta_0) \to (U^\ast,z_0)$, the pull-back $\tilde{\om}_f:=h^\ast \om_f$ and the primitive $\tilde{\phi}_f : \HH \to \CC$ defined by $Z=\tilde{\phi}_f(\zeta)= \int_{[\zeta_0,\zeta]} \tilde{\om}_f(s) \, ds$, as in \S \ref{rc}. Define a map $\tilde{f}: h^{-1}(V^\ast) \to \HH$ as follows. Let $\zeta \in h^{-1}(V^\ast)$ and $z=h(\zeta)$. By \lemref{nofp} the segment $[z,f(z)]$ is contained in $U^\ast$, so it lifts uniquely under $h$ to a path which starts at $\zeta$ and ends at some $\zeta'$. Define $\tilde{f}(\zeta):=\zeta'$. Evidently $\tilde{f}$ satisfies $h \circ \tilde{f} = f \circ h$ and in particular it is holomorphic. Moreover, 
$$
\tilde{\phi}_f(\tilde{f}(\zeta))-\tilde{\phi}_f(\zeta)= \int_{[\zeta,\tilde{f}(\zeta)]} \tilde{\om}_f(s) \, ds = \int_{[z,f(z)]} \om_f(s) \, ds 
$$ 
since $[z,f(z)]$ and $h([\zeta,\tilde{f}(\zeta)])$ are homotopic in $U^\ast$. In other words, the lifted dynamics $\tilde{f}$ in the rectifying coordinate $Z=\tilde{\phi}_f(\zeta)$, or equivalently in the multi-valued rectifying coordinate $Z=\phi_f(z)=\int_{z_0}^z \om_f(s) \, ds$, is seen as the map 
$$
F: Z \mapsto Z+1+u_f,
$$ 
where 
$$
u_f(z):= -1+\int_{[z,f(z)]} \om_f(s) \, ds \qquad (z \in V^\ast).
$$ 
Notice how $u_f$, the difference between $F(Z)$ and $Z+1$, does not depend on the branch of $Z=\phi(z)$ and only depends on $z$. We will see that in sufficiently small neighborhoods of a fixed point of $f$ the function $u_f$ can be made arbitrarily small and therefore the lifted dynamics under $\phi_f$ is seen as a near unit translation. This picture persists under perturbations $f_\ve$ of $f$, so $\phi_{f_\ve}$ serves as an approximate Douady-Fatou coordinate for such perturbations. This point of view is elucidated in the following lemma and later in \S \ref{ldfn}, and plays a central role in the proof of \thmref{B} in \S \ref{pfla}.  \vs  

It was first observed by X. Buff that the function $u_f$ extends to zero across the fixed points of $f$, so it gives rise to a holomorphic function $u_f: V \to \CC$. We sharpen this observation in the following statement:

\begin{lemma}\label{uw}
If $p$ is a fixed point of $f$ in $U$, then $u_f$ extends holomorphically to $p$, with $u_f(p)=0$. Moreover, if $p$ has multiplier $1$ and fixed point multiplicity $q+1 \geq 2$, then $u_f$ has a zero of order $2q$ at $p$.   
\end{lemma}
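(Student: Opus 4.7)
The plan is to analyze $u_f$ locally near each fixed point $p$ of $f$, exploiting the observation that although $\om_f$ has a pole at $p$, the product $V(z) := \om_f(z)\,\De(z) = (f'(z) - 1)/\Log f'(z)$ (where $\De(z) := f(z) - z$) extends to a holomorphic function near $p$, with $V(p) = 1$ whenever $f'(p) = 1$.

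For the first assertion, I would distinguish $f'(p) \neq 1$ from $f'(p) = 1$ of fixed point multiplicity $q + 1 \geq 2$; in the latter case write $f(z) = p + (z - p) + a(z - p)^{q+1} + O((z - p)^{q+2})$ with $a \neq 0$. The residue calculations underlying \lemref{res} furnish an explicit local primitive of $\om_f$: either $\phi(z) = \Log(z - p)/\Log f'(p) + H(z)$ in the simple case, or
$$
\phi(z) = -\frac{1}{qa(z - p)^q} + \resit(f, p) \Log(z - p) + H(z)
$$
in the parabolic case, with $H$ holomorphic. By \lemref{nofp} the segment $[z, f(z)]$ avoids $p$, so $u_f(z) = \phi(f(z)) - \phi(z) - 1$ along continuous branches. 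As $z \to p$, the ratio $(f(z) - p)/(z - p)$ tends to $f'(p)$ in the simple case, while $(z - p)^{-q} - (f(z) - p)^{-q}$ tends to $qa$ in the parabolic case; in both situations the polar and logarithmic contributions combine to exactly cancel the constant $-1$, and $H(f(z)) - H(z) \to 0$. Hence $u_f$ is bounded near $p$ and tends to $0$, so Riemann's removable singularity theorem supplies the holomorphic extension with $u_f(p) = 0$.

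For the vanishing order in the parabolic case, I would normalize coordinates so that $p = 0$ and $f(z) = z + z^{q+1} + b z^{2q+1} + O(z^{2q+2})$, killing the middle coefficients $z^{q+2}, \ldots, z^{2q}$ in $f$. The crucial structural input is that $\De'$ then has only terms at orders $z^q, z^{2q}, z^{2q+1}, \ldots$ with a gap between $z^q$ and $z^{2q}$, and consequently $V = 1 + \De'/2 - (\De')^2/12 + O((\De')^3)$ inherits the same gap structure. Parametrizing $[z, f(z)]$ by $s = z + w\De(z)$, $w \in [0, 1]$, and Taylor-expanding the integrand $\om_f(z + w\De(z))\,\De(z)$ in powers of $w$ (using $\om_f = V/\De$ and the expansions of $V$ and $\De$ around $z$), then integrating in $w$, yields
$$
u_f(z) = (V - 1) + \tfrac{1}{2}\bigl(\De V' - V \De'\bigr) + \tfrac{1}{3}\Bigl(V (\De')^2 - \tfrac{1}{2} V \De \De'' - \De V' \De'\Bigr) + O(z^{3q}),
$$
with all functions evaluated at $z$. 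The gap structure kills all orders $z^{q+1}, \ldots, z^{2q-1}$ automatically. The $z^q$ coefficient vanishes by exact cancellation between $\De'/2$ in $V - 1$ and $-V \De'/2$ in the second summand, while collecting the $z^{2q}$ contributions from all three displayed summands collapses to the nonzero constant $q(q+1)/12$, independent of $b$.

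The main obstacle is the careful bookkeeping: one must verify that the higher-order Taylor terms $\om_f^{(k)}(z) \De(z)^{k+1}/(k+1)!$ for $k \geq 3$ contribute only at orders $\geq z^{3q}$ (a consequence of $\om_f^{(k)} = O(z^{-(q+1+k)})$ and $\De^{k+1} = O(z^{(k+1)(q+1)})$, yielding $\om_f^{(k)}\De^{k+1} = O(z^{kq})$), and track the three distinct contributions at order $z^{2q}$ to witness their precise non-trivial combination. Once this is complete, $u_f$ vanishes at $p$ to order exactly $2q$, as claimed.
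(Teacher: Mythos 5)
Your proof is essentially correct and follows the same overall strategy as the paper: Taylor-expand the integrand $\om_f(z+w\De)\De$ in the integration parameter, integrate termwise, and observe the crucial cancellations at orders $z^q$ and of $(\De')^2$ at $z^{2q}$, leaving the leading term $\tfrac{1}{12}\De\De''$. Your formula
\[
u_f(z) = (V - 1) + \tfrac{1}{2}(\De V' - V\De') + \tfrac{1}{3}\bigl(V(\De')^2 - \tfrac{1}{2}V\De\De'' - \De V'\De'\bigr) + O(z^{3q})
\]
is the same expansion as the paper's \eqref{ghod} after integration, merely reorganized around $V = \om_f\De$ and its $z$-derivatives rather than around $\De(z+t\De)$ and $\De'(z+t\De)$; checking that the coefficients of $\De'$ and $(\De')^2$ vanish and that $\De\De''$ picks up $1/12$ gives exactly the paper's $u_f = \tfrac{1}{12}\De\De'' + O(z^{3q})$. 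Your treatment of the first assertion via explicit local primitives plus Riemann removability is a clean, valid alternative to the paper's direct integration estimate.

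One point deserves scrutiny: you frame the preliminary normalization $f(z) = z + z^{q+1} + bz^{2q+1} + O(z^{2q+2})$ as a ``crucial structural input'' because it produces a gap in $\De'$. In fact this step is superfluous. With general $\De(z) = az^{q+1} + O(z^{q+2})$ the algebraic cancellations in your three summands still annihilate every order below $2q$ (as the paper's proof confirms), so the gap is not what makes the argument work. Moreover, if you do insist on normalizing, you owe an argument that the vanishing order of $u_f$ is unchanged by the nonlinear conjugacy, since the Buff form is not conjugation-invariant. This does follow — the paper's remark that $h^*(\om_g) - \om_f = O(z^q)$ combined with $|f(z)-z| = O(z^{q+1})$ shows the discrepancy in $u_f$ is only $O(z^{2q+1})$ — but you did not supply it. Either drop the normalization (simplest) or cite the asymptotic invariance of Buff forms to license it.
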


\begin{proof}
Take $p=0$ and set $\la=f'(0)$. First consider the easier case where $\la \neq 1$. Write $f(z)=zg(z)$, where $g(z)=\la+ O(z)$. By the proof of \lemref{res}, $\om_f(z)=1/(z \Log \la)+O(1)$ in a neighborhood of $0$. It follows that 
\begin{align*}
u_f(z) & = -1+(f(z)-z) \int_0^1 \om_f(z+t(f(z)-z)) \, dt \\
& = -1+\frac{z(g(z)-1)}{\Log \la} \, \int_0^1 \left(\frac{1}{z+tz(g(z)-1)}  + O(1) \right) \, dt \\
& = -1+\frac{1}{\Log \la} \Log (g(z)) + O(z) = O(z).
\end{align*}

Now suppose $\la=1$, so $f$ has the local form \eqref{EP} near $0$. Recall that the error term $\De(z)=f(z)-z$ satisfies 
\begin{align*}
\De(z) & = az^{q+1}+O(z^{q+2}) \qquad (a \neq 0) \\ 
\De'(z) & = a(q+1)z^q+O(z^{q+1}) \\ 
\De''(z) & = aq(q+1)z^{q-1}+O(z^q). 
\end{align*}
For $0 \leq t \leq 1$, 
\begin{align*}
\De(z+t\De(z)) & = \De(z) + t\De(z) \De'(z) + \frac{1}{2} t^2 \De^2(z) \De''(z) + O(z^{4q+1}) \\
\De'(z+t\De(z)) & = \De'(z) + t \De(z) \De''(z) + O(z^{3q}). 
\end{align*}
For simplicity, let us fix $z$ and write $\De=\De(z), \De'=\De'(z), \De''=\De''(z)$. By \eqref{g},
$$
\omega_f(z+t\De) = \frac{1+\frac{1}{2}(\De' + t \De \De'' + O(z^{3q}))- \frac{1}{12}(\De' + t \De \De'' + O(z^{3q}))^2}{\De + t\De \De' + \frac{1}{2} t^2 \De^2 \De'' + O(z^{4q+1})} + O(z^{2q-1}), 
$$
so 
\begin{align}\label{ghod}
& \ \ \ \ \ \ \De \, \omega_f(z+t\De) = \frac{1+\frac{1}{2}(\De' + t \De \De'' + O(z^{3q}))- \frac{1}{12}(\De' + t \De \De'' + O(z^{3q}))^2}{1 + t \De' + \frac{1}{2} t^2 \De \De'' + O(z^{3q})} + O(z^{3q}) \notag \\ 
& = \left( 1 + \tfrac{1}{2} \De' - \tfrac{1}{12} (\De')^2 + \tfrac{1}{2} t \De \De'' + O(z^{3q}) \right) \left( 1-t \De' - \tfrac{1}{2} t^2 \De \De'' + t^2 (\De')^2 + O(z^{3q}) \right)+ O(z^{3q}) \notag \\
& = 1+ \left( -t+\tfrac{1}{2} \right) \De' + \left( t^2 - \tfrac{1}{2} t -\tfrac{1}{12} \right) (\De')^2 + \left( - \tfrac{1}{2} t^2 + \tfrac{1}{2} t \right) \De \De'' + O(z^{3q}). 
\end{align}
Integrating from $t=0$ to $t=1$, the coefficients of $\De'$ and $(\De')^2$ vanish and the coefficient of $\De \De''$ gives $1/12$. Thus,  
\begin{align*}
u_f(z) & = -1+\int_0^1 \De \, \omega_f(z+t\De) \, dt = \frac{1}{12} \De \De'' + O(z^{3q}) \\
& = \frac{1}{12} a^2q(q+1) z^{2q} + O(z^{2q+1}), 
\end{align*}
as required.
\end{proof}

It follows from the above lemma that in a sufficiently small neighborhood of a fixed point of $f$, where $|u_f|<\ve$, the pair $z, f(z)$ can be lifted under $\phi_f$ to a pair $Z,F(Z)$ with the property $|F(Z)-Z-1|<\ve$. The following lemma achieves a similar control over the lift of the entire segment $[z,f(z)]$. For $z \in V^*$ and $0 \leq t \leq 1$, let $z_t:=(1-t)z+tf(z)$ and define   

\begin{equation}\label{uft}
u_{f,t}(z):=-t+\int_{[z,z_t]} \om_f(s) \, ds.   
\end{equation}  

\begin{lemma}\label{whole}
Suppose $p \in U$ is a parabolic fixed point of $f$ with multiplicity $q+1 \geq 2$. For every $0<\ve<1$ there is an $r>0$ such that $|u_{f,t}(z)|<\ve t$ for $(z,t) \in \DD(p,r) \times [0,1]$. As a result, if $z \in \DD(p,r)$ is not fixed under $f$, the curve $t \mapsto z_t$ parametrizing $[z,f(z)]$ lifts under $\phi_f$ to a curve $t \mapsto Z_t$ which satisfies $|Z_t-(Z_0+t)|<\ve t$.     
\end{lemma}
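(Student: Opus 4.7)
The plan is to reduce everything to the Taylor expansion already carried out in the proof of \lemref{uw}. Parametrize the segment $[z,z_t]$ by $s \mapsto z+s\De(z)$ for $s \in [0,t]$, where $\De(z):=f(z)-z$. Then \eqref{uft} becomes
$$
u_{f,t}(z) \;=\; \int_0^t \bigl(\De(z)\,\om_f(z+s\De(z)) - 1\bigr)\, ds.
$$
The integrand is precisely the quantity analyzed in \eqref{ghod} (with the dummy variable $s$ in place of the $t$ appearing there), and that expansion yields
$$
\De(z)\,\om_f(z+s\De(z)) - 1 \;=\; \bigl(-s+\tfrac{1}{2}\bigr)\De'(z) + O\bigl((z-p)^{2q}\bigr),
$$
with the error estimate uniform in $s \in [0,1]$.

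Since $\De(z)=a(z-p)^{q+1}+O((z-p)^{q+2})$, we have $\De'(z)=O((z-p)^q)$, and the coefficient $|{-s+\tfrac{1}{2}}|$ is bounded by $\tfrac{1}{2}$ for $s\in[0,1]$. Hence there exist constants $C>0$ and $r_0>0$ so that the integrand is bounded in modulus by $C|z-p|^q$ for all $z \in \DD(p,r_0)$ and $s \in [0,1]$. Integrating from $0$ to $t$ gives the pointwise estimate $|u_{f,t}(z)| \le Ct\,|z-p|^q$, so choosing $r \in (0,r_0)$ with $Cr^q<\ve$ forces $|u_{f,t}(z)|<\ve t$ on $\DD(p,r)\times (0,1]$. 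The inequality is trivial at $t=0$ because $u_{f,0}\equiv 0$.

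For the second assertion, suppose $z \in \DD(p,r)$ satisfies $f(z)\neq z$. By \lemref{nofp} the segment $[z,f(z)]$ is contained in $U^\ast$, so the subsegment $[z,z_t]$ avoids every fixed point of $f$ and the line integral $\int_{[z,z_t]}\om_f$ is a well-defined complex number. Choosing any lift $Z_0$ of $z$ under $\phi_f$, the lifted curve reads
$$
Z_t \;=\; Z_0 + \int_{[z,z_t]}\om_f(\eta)\,d\eta \;=\; Z_0 + t + u_{f,t}(z),
$$
so the first assertion translates directly into $|Z_t-(Z_0+t)|<\ve t$. The only mild subtlety is verifying that $\De(z)\,\om_f(z+s\De(z))-1$ extends holomorphically in $z$ across $p$ (the individual factors being singular there); but this is already contained in the algebraic manipulations in \eqref{ghod}, where the simple pole of $\om_f$ at $p$ is cancelled by the zero of $\De(z)$ of order $q+1$, so the estimates above apply on a full disk $\DD(p,r)$ rather than only on its intersection with $V^\ast$.
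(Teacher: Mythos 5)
Your proof is correct and uses essentially the same approach as the paper: both start from the expansion \eqref{ghod} of $\De(z)\,\om_f(z+s\De(z))$ and integrate over $s\in[0,t]$. The paper carries the explicit coefficients of $\De'$, $(\De')^2$, $\De\De''$ through the integration and then bounds each separately using $\sup\{|\De|,|\De'|,|\De''|\}<\ve/2$; you streamline this by noting that $(\De')^2$, $\De\De''$, and the remainder are all $O((z-p)^{2q})$ and therefore absorbed into the error alongside the leading $(-s+\tfrac12)\De'=O((z-p)^q)$ term. Both routes give $|u_{f,t}(z)|\le C\,t\,|z-p|^q$, and your concluding remarks (using \lemref{nofp} for the lift and observing that the pole of $\om_f$ at $p$ is cancelled by the zero of $\De$) match the paper's framework.
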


\begin{proof} 
The second claim follows immediately from the first by setting $Z_t=Z_0+t+u_{f,t}(z)$. For the first claim, let $p=0$ and $\Delta(z)=f(z)-z=O(z^{q+1})$. Take $r>0$ small enough such that   
$$
\sup_{|z|\leq r} \max \{ |\De(z)|, |\De'(z)|, |\De''(z)| \} < \tfrac{1}{2} \ve. 
$$
We can also arrange that the $O(z^{q+1})$ remainder term in \eqref{ghod} is uniformly bounded above by $\ve/2$ in the disk $\DD(0,r)$. By \eqref{ghod}, $u_{f,t}(z)$ can be expressed as 
$$
\left( -\tfrac{1}{2} t^2+\tfrac{1}{2}t \right) \De'(z) + \left( \tfrac{1}{3}t^3 - \tfrac{1}{4}t^2 -\tfrac{1}{12} t \right) (\De'(z))^2 + \left( - \tfrac{1}{6} t^3+ \tfrac{1}{4} t^2 \right) \De(z) \De''(z) + t \, O(z^{3q}).
$$
It follows that 
\begin{align*}
|u_{f,t}(z)| & < \tfrac{1}{4}t(1-t) \ve + \tfrac{1}{48}t(1-t)(4t+1) \ve^2 + \tfrac{1}{48} t^2(-2t+3) \ve^2 + \tfrac{1}{2} t \ve \\
& \leq \tfrac{1}{4} t \ve + \tfrac{5}{48} t \ve^2 + \tfrac{3}{48} t^2 \ve^2 + \tfrac{1}{2} t \ve \\
& \leq \tfrac{1}{4} t \ve + \tfrac{1}{8} t \ve + \tfrac{1}{8} t \ve + \tfrac{1}{2} t \ve =\ve t. \qedhere
\end{align*}
\end{proof}

\begin{remark}\label{uft-hol}
The family of functions $\{ u_{f,t} \}_{0 \leq t \leq 1}$ defined in \eqref{uft} extend holomorphically across the fixed points of $f$. This is trivial for $u_{f,0}=0$ and was shown in \lemref{uw} for $u_{f,1}=u_f$. For $0<t<1$, an argument similar to the proof of \lemref{uw} shows that $u_{f,t}$ takes a non-zero value at each simple fixed point and has a zero of order $q$ at each fixed point of multiplicity $q+1 \geq 2$.        
\end{remark}

\section{Near-parabolic maps and proof of \thmref{A}}\label{bfnpm}

\subsection{A convenient multiplier parameter}\label{nta} 
In order to deal with multipliers $\la$ close but not equal to $1$, it will be convenient to work with the new parameter 
$$
\La := \frac{1}{\Log \la}   
$$
instead, where $\La$ is near $\infty$. The expansion
$$
\frac{1}{\Log \la}=-\frac{1}{1-\la}+\frac{1}{2}+O(1-\la)
$$
shows that if $\lambda=\e^{1/\La}$ is the multiplier of a fixed point $p=f(p)$, then 
\begin{equation}\label{1/a=resit}
\La-\resit(f,p) = O(\La^{-1}) \qquad \text{as} \ \La \to \infty. 
\end{equation}
Thus, $\La$ blows up asymptotically the same way as the r\'esidu it\'eratif does. Evidently $p$ is repelling or attracting according as $\myre(\La)$ is positive or negative. \vs 
 
We say $\la \to 1$ {\bit non-tangentially} if $\myim (1-\la) / \myre (1-\la)$ stays bounded as $\la \to 1$. In other words, if $\la$ tends to $1$ within a sector of opening angle $< \pi$ at $1$ which is symmetric about the real axis. It is easy to see that    
\begin{equation}\label{ntaeq}
\la \to 1 \ \text{non-tangentially} \ \Longleftrightarrow \    
\frac{\myim (\La)}{\myre (\La)} = O(1) \ \text{as} \ \La \to \infty. 
\end{equation}

\subsection{Perturbations of non-degenerate parabolics} \label{pndp}
Consider a holomorphic map $g$ with a non-degenerate parabolic fixed point at $p$ whose multiplier $\la=g'(p)$ is a primitive $q$th root of unity. After an analytic change of coordinates that sends $p$ to $0$, we may assume 
$$
g(z) = \la z + O(z^2),
$$
and 
\begin{equation}\label{nfoff}
f:= g^{\circ q} = z + z^{q+1} + b z^{2q+1} + O(z^{2q+2}), \quad \text{where} \ b=\iota(f,0).\footnote{We emphasize that this normal form is a matter of convenience and not essential for Theorems A and B and their proofs.} 
\end{equation}
We fix a sufficiently small $0<r_0<1$ such that 
\begin{enumerate}
\item[$\bullet$]
$f$ is univalent in a neighborhood of the closed disk $\ov{\DD}(0,r_0)$,
\item[$\bullet$]
$0$ is the only fixed point of $f$ in $\ov{\DD}(0,r_0)$,
\item[$\bullet$]  
$\myre(f')>0$ in $\ov{\DD}(0,r_0)$. 
\end{enumerate}
Take a sequence $g_n(z)=\la_n z + O(z^2)$ of analytic perturbations of $g$. We will assume that $|\la_n| \neq 1$ for all $n$ and $f_n:=g_n^{\circ q} \to f$ uniformly on $\ov{\DD}(0,r_0)$ as $n \to \infty$. The parabolic fixed point of $g$ at $0$ bifurcates into a simple fixed point of $g_n$ at $0$ of multiplier $\la_n=g'_n(0)$ and a $q$-cycle $\sO_n = \{ p_{n,1}, \ldots, p_{n,q} \}$ of multiplier $\mu_n:=(f_n)'(p_{n,j})$ independent of $j$. The points in $\sO_n$ are asymptotically the vertices of a regular $q$-gon centered at $0$. In fact, they are the non-zero roots of the equation $f_n(z)-z=0$ near the origin which, ignoring the vanishingly small terms, has the form $(\la_n^q-1)+z^q=0$. This heuristic argument suggests that the $p_{n,j}$ are asymptotically the $q$th roots of $1-\la_n^q$. The following lemma provides a rigorous justification: \vs

\begin{lemma}\label{est2}
If $\de_n:=1-\la_n^q$, then 
$$
p_{n,j}^q = \de_n + o(\de_n) \qquad \text{as} \ n \to \infty. 
$$
\end{lemma}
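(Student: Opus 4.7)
The plan is to combine two ingredients: a product formula for the $p_{n,j}$ derived from the factorization of $f_n(z)-z$ near $0$, and an approximate cycle relation showing $p_{n,j+1} \approx \la_n p_{n,j}$ that is inherited from the linear part of $g_n$.

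Since $f(z)-z$ has a zero of order $q+1$ at $0$ and $f_n \to f$ uniformly on $\ov{\DD}(0,r_0)$, Hurwitz's theorem gives that for $n \gg 1$ the equation $f_n(z)-z = 0$ has exactly $q+1$ roots near $0$ counted with multiplicity, all tending to $0$. The root at $0$ is simple since $(f_n-\id)'(0)=-\de_n\neq 0$, and the remaining $q$ roots form the cycle $\sO_n$. We can therefore factor
\begin{equation*}
f_n(z)-z = A_n(z)\, z\, \prod_{j=1}^q(z-p_{n,j})
\end{equation*}
in a fixed small disk around $0$, with $A_n$ holomorphic and non-vanishing. Since $p_{n,j}\to 0$, the product $\prod_{j=1}^q(z-p_{n,j})\to z^q$ uniformly, and comparison with $f_n(z)-z \to z^{q+1}+bz^{2q+1}+O(z^{2q+2})$ gives $A_n(0)\to 1$. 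Differentiating the factorization at $z=0$ and using $(f_n-\id)'(0)=-\de_n$ yields
\begin{equation*}
\prod_{j=1}^q p_{n,j} = (-1)^{q+1}\de_n/A_n(0) = (-1)^{q+1}\de_n(1+o(1)).
\end{equation*}

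Since $\sO_n$ is a $q$-cycle under $g_n$ with $g_n(z)=\la_n z(1+O(z))$, the cycle relation $p_{n,j+1}=g_n(p_{n,j})$ gives $p_{n,j+1} = \la_n p_{n,j}(1+O(p_{n,j}))$. The ratios $|p_{n,j+1}|/|p_{n,j}|$ are therefore bounded above and below by positive constants, so all $|p_{n,j}|$ are comparable; combined with the product estimate above this forces $|p_{n,j}| \asymp |\de_n|^{1/q}$. Iterating the cycle relation then yields
\begin{equation*}
p_{n,j} = \la_n^{j-1}\, p_{n,1}\,(1+O(|\de_n|^{1/q})) \qquad (1 \leq j \leq q).
\end{equation*}

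Taking the product over $j=1,\ldots,q$ and using that $\la^q=1$ forces $\la^{q(q-1)/2} = (-1)^{q-1} = (-1)^{q+1}$ (trivial for $q$ odd; for $q$ even use $\la^{q/2}=-1$), we obtain
\begin{equation*}
\prod_{j=1}^q p_{n,j} = p_{n,1}^q\, \la_n^{q(q-1)/2}(1+o(1)) = (-1)^{q+1}\, p_{n,1}^q\,(1+o(1)).
\end{equation*}
Comparing with the first product formula gives $p_{n,1}^q = \de_n+o(\de_n)$. Raising the iterated cycle identity to the $q$-th power and using $\la_n^q = 1-\de_n = 1+o(1)$ propagates this uniformly to $p_{n,j}^q = \de_n+o(\de_n)$ for every $j$. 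The main obstacle is the magnitude estimate $|p_{n,j}| \asymp |\de_n|^{1/q}$, which does not follow directly from $f_n \to f$; it requires pairing the product formula with the crude cycle ratios to show that all $|p_{n,j}|$ are of the same order.
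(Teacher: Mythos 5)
Your proof is correct, and it takes a genuinely different route from the paper's.

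The paper works directly with the Taylor coefficients of $f_n = g_n^{\circ q}$ through degree $q+1$: it observes that the coefficients $A_{n,j}$ of $z^j$ for $2 \le j \le q$ are universal polynomials in $\la_n$ and the low-order coefficients of $g_n$ which vanish whenever $\la_n$ is a primitive $q$th root of unity, and hence carry a factor comparable to $\de_n$. The fixed-point equation $f_n(z)=z$ then collapses to $\de_n - z^q = \de_n\,o(1) + o(z^q)$, from which $\de_n/z^q = 1 + o(1)$ follows immediately. You instead factor $f_n(z)-z$ near $0$ via Hurwitz, read off $\prod_j p_{n,j} \sim (-1)^{q+1}\de_n$ from the derivative at the simple root $z=0$ (with $A_n(0)\to1$ by the maximum principle), and then use the dynamical cycle relation $p_{n,j+1}=g_n(p_{n,j}) = \la_n p_{n,j}(1+o(1))$ together with $\la^{q(q-1)/2}=(-1)^{q+1}$ to convert the product into $p_{n,1}^q$. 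This is more elementary — it avoids any appeal to the algebra of universal iterate coefficients and their cyclotomic factorization — and it makes the near-regular-$q$-gon geometry of $\sO_n$ entirely explicit. Each step checks out; the verification that $\la^{q(q-1)/2}=(-1)^{q+1}$ (splitting $q$ odd and $q$ even, using $\la^{q/2}=-1$ in the latter case) is handled correctly.

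One small simplification worth noting: the magnitude estimate $|p_{n,j}| \asymp |\de_n|^{1/q}$, which you single out as the main obstacle, is actually more than is needed. Since Hurwitz already gives $p_{n,j}\to 0$, the crude relation $p_{n,j+1}=\la_n p_{n,j}(1+o(1))$ can be iterated directly to get $p_{n,j}=\la_n^{j-1}p_{n,1}(1+o(1))$, which is all that the final comparison uses; the quantitative bound $O(|\de_n|^{1/q})$ on the error is a harmless but unnecessary refinement. So the argument can be streamlined: Hurwitz $\Rightarrow$ factorization and $p_{n,j}\to 0$, then product formula, then iterated cycle relation, then compare.
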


\begin{proof}
Suppose 
$$
g_n(z) = \la_n z + \sum_{j=2}^q a_{n,j} z^j + a_{n,q+1} z^{q+1} + O(z^{q+2}).
$$
Here and in what follows $O(z^{q+2})$ denotes a remainder whose ratio to $z^{q+2}$ is bounded in a neighborhood of $0$ by a constant independent of $n$. Taking the $q$th iterate gives  
$$
f_n(z) = \la_n^q z + \sum_{j=2}^q A_{n,j} z^j + A_{n,q+1} z^{q+1} + O(z^{q+2}),
$$
where the coefficients $A_{n,2}, \ldots, A_{n,q}$ are universal polynomial functions of the $q$ variables $\la_n$ and $a_{n,2} \ldots, a_{n,q}$. Moreover, each of these polynomials vanishes upon substituting any $q$th root of unity for the variable $\la_n$, so it must have a factor $\de_n$. Thus, 
$$
f_n(z) = \lambda_n^q z + \de_n \sum_{j=2}^q \tilde{A}_{n,j} z^j + A_{n,q+1} z^{q+1} + O(z^{q+2}),
$$
where the $\tilde{A}_{n,j}$ are universal polynomials in $\lambda_n$ and $a_{n,2} \ldots, a_{n,q}$. As $n \to \infty$ the $\tilde{A}_{n,j}$ remain uniformly bounded in $2 \leq j \leq q$ while $A_{n,q+1} \to 1$. Now the points in the $q$-cycle $\sO_n$ are the roots of the equation   
$$
\lambda_n^q + \de_n \sum_{j=2}^q \tilde{A}_{n,j} z^{j-1} + A_{n,q+1} z^q + O(z^{q+1}) = 1,
$$ 
which can be written as 
$$
\de_n - z^q = \de_n \sum_{j=2}^q \tilde{A}_{n,j} z^{j-1} + (A_{n,q+1}-1) z^q + O(z^{q+1}) = \de_n \, o(1) + o(z^q). 
$$
This gives
\[
\frac{\de_n}{z^q} -1 = \frac{\de_n}{z^q} \, o(1) + o(1) \qquad \text{or} \qquad \frac{\de_n}{z^q} = 1+o(1). \qedhere 
\]
\end{proof}

\subsection{The lifted dynamics of $f$ and $f_n$}\label{ldfn}

As in \S \ref{nta} we introduce the parameters 
$$
\La_n := \frac{1}{\Log \lambda_n^q} \qquad \text{and} \qquad \Mu_n := \frac{1}{\Log \mu_n},
$$
so both $\La_n, \Mu_n$ tend to $\infty$ as $n \to \infty$. We may assume that $0$ and the $p_{n,j}$ are the only fixed points of $f_n$ in $\ov{\DD}(0,r_0)$. Using the normal form \eqref{nfoff} to write $f(z)=z+\De(z)$ with $\De(z)=z^{q+1}(1+bz^q+O(z^{q+1}))$, and invoking the expansion \eqref{g}, gives 
\begin{equation}\label{om}
\om(z) := \om_f(z) = \frac{1}{z^{q+1}}+\frac{\rho}{z}+e(z),
\end{equation} 
where $\rho:=\resit(f,0)=(q+1)/2-b$ and $e(z)$ is holomorphic in a neighborhood of $\ov{\DD}(0,r_0)$. Similarly, since $\om_n:=\om_{f_n}$ has simple poles at $0$ and along the $q$-cycle $\sO_n = \{ p_{n,1}, \ldots, p_{n,q} \}$, we can use \lemref{res} to write
\begin{equation}\label{omn}
\om_n(z)=\om_{f_n}(z)= \frac{\La_n}{z} + \Mu_n \sum_{j=1}^q \frac{1}{z-p_{n,j}} + e_n(z),
\end{equation} 
where $e_n(z)$ is holomorphic in a neighborhood of $\ov{\DD}(0,r_0)$. It is easy to see that $e_n \to e$ uniformly on $\ov{\DD}(0,r_0)$. In fact, in the annulus $0<r_0<|z|<r_0+2\ve$ for a fixed small $\ve$ and $n \gg 1$, we have the Laurent expansion   
\begin{align*}
\frac{\La_n}{z} + \Mu_n \sum_{j=1}^q \frac{1}{z-p_{n,j}} & = \frac{\La_n}{z} + \frac{\Mu_n}{z} \sum_{j=1}^q \frac{1}{1-p_{n,j}/z} = \frac{\La_n}{z} + \frac{\Mu_n}{z}\sum_{j=1}^q \sum_{k=0}^{\infty} \left( \frac{p_{n,j}}{z} \right)^k \\
& = (\La_n + q \Mu_n) \, \frac{1}{z} + \Mu_n \sum_{k=1}^{\infty} \Big( \sum_{j=1}^q p_{n,j}^k \Big) \ \frac{1}{z^{k+1}} 
\end{align*}
containing only negative powers of $z$. Since $\omega_n \to \omega$ uniformly on the circle $|z|=r_0+\ve$, it follows that  
$$
e_n(z) = \frac{1}{2\pi \ii} \int_{|s|=r_0+\ve} \frac{\omega_n(s)}{s-z} ds \to \frac{1}{2\pi \ii} \int_{|s|=r_0+\ve} \frac{\omega(s)}{s-z} ds = e(z)
$$ 
uniformly in $\DD(0,r_0+\ve)$. \vs

The expressions \eqref{om} and \eqref{omn} yield rather convenient formulas for the multi-valued rectifying coordinates $\phi=\phi_f$ of $\om$ and $\phi_n=\phi_{f_n}$ of $\om_n$:
\begin{align}
\phi(z) & = -\frac{1}{qz^q} + \rho \log z + E(z), \label{ZZ} \\
\phi_n(z) & = \La_n \log z + \Mu_n \sum_{j=1}^q \log(z-p_{n,j}) + E_n(z). \label{ZZn}
\end{align} 
Here $E,E_n$ are the unique primitives of $e,e_n$ with $E(0)=E_n(0)=0$. Since in each annulus $\ve<|z|<r_0+\ve$ we have the uniform convergence $\omega_n \to \omega$, it follows that for each branch of $\phi$ in this annulus there is a branch of $\phi_n$ for which $\phi_n \to \phi$ uniformly in this annulus. As for the remainder terms, $e_n \to e$ and therefore $E_n \to E$ uniformly on $\ov{\DD}(0,r_0)$. \vs

By \eqref{ZZ} the restriction of $\phi$ to the circle $|z|=r \leq r_0$ can be made uniformly close to $-1/(qz^q)+\rho \log z$ by choosing $r$ sufficiently small. Thus, given an interval $I$ of length $2\pi$, any lift $\theta \mapsto \phi(r \e^{\ii \theta})$ for $\theta \in I$ is a clockwise spiral making $q$ almost round turns having a net translation $2\pi \ii \rho$ (note that these turns have diameters comparable to $r^{-q}$, which are much larger than the fixed translation factor $2\pi \ii \rho$ when $r$ is small). Choosing a suitable branch of $\phi_n$ in \eqref{ZZn} that is uniformly close to $\phi$, it follows that the lift $\theta \mapsto \phi_n(r\e^{\ii \theta})$ for $\theta \in I$ and $n \gg 1$ is a nearby spiral also making $q$ turns, except that its net translation is $2\pi \ii \, (\La_n+q\Mu_n)$ (see \figref{spiral}). \vs 

\begin{figure}[t]
	\centering
	\begin{overpic}[width=0.7\textwidth]{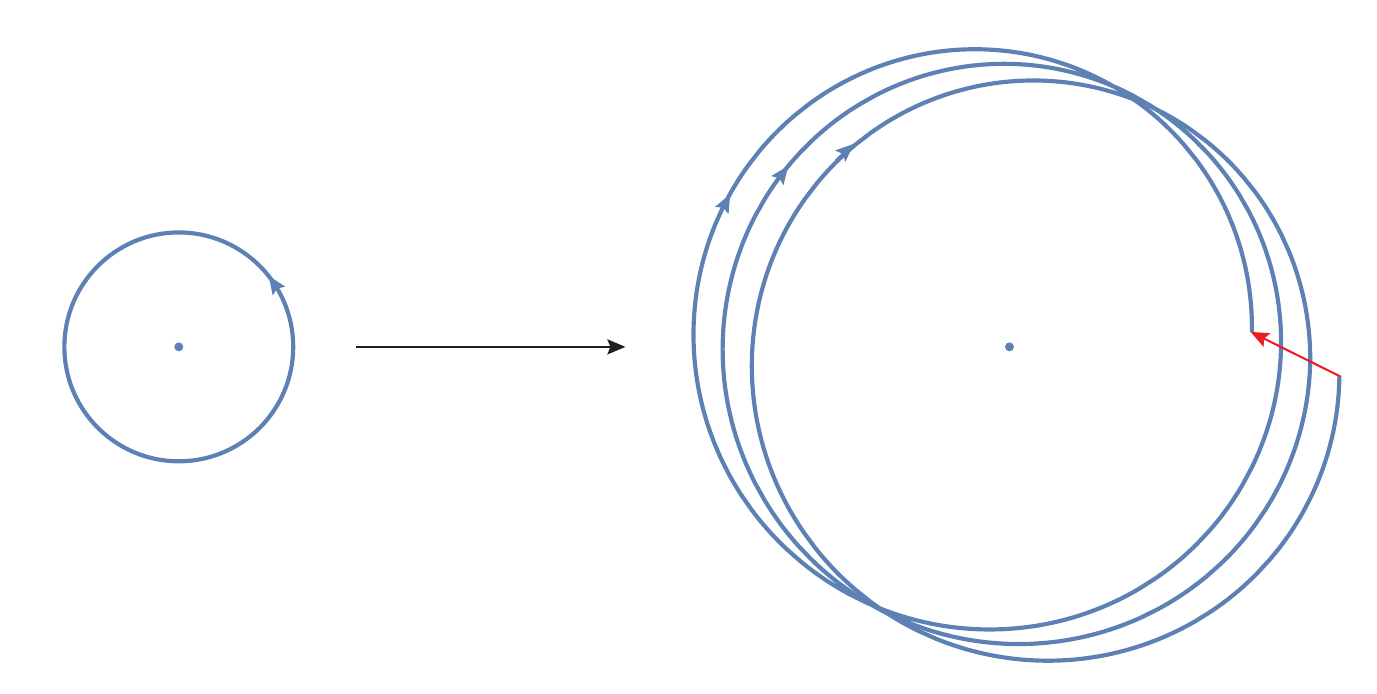}\put (12,22) {\footnotesize $0$}
\put (10,14) {\footnotesize $|z|=r$}
\put (35,27) {\footnotesize $\phi_n$}
\put (96,25) {\footnotesize \color{red}{$2\pi \ii \, (\La_n+q\Mu_n)$}}
	\end{overpic}
	\caption{\footnotesize Under any branch of the rectifying coordinate $Z=\phi_n(z)$ for $n \gg 1$ the positively oriented circle $|z|=r<r_0$ lifts to a clockwise spiral making $q$ almost round turns with a net translation $2\pi \ii(\La_n+q\Mu_n)$. In this example $q=3$. The lifted curve has strictly negative Euclidean curvature comparable to $-r^q$.}  
	\label{spiral}
\end{figure}

When $r$ is sufficiently small, the lifts of the positively oriented circle $|z|=r$ under $\phi$ and $\phi_n$ for $n \gg 1$ have strictly negative curvature. In fact, using \eqref{om} it is easy to check that 
\begin{align*}
1+\frac{z\om'(z)}{\om(z)} & = -q (1-\rho z^q + O(z^{q+1})) \\
\frac{1}{z \om(z)} & = z^q (1- \rho z^q + O(z^{q+1})).
\end{align*}
It follows from the formula \eqref{kappa} that the curvature $\kappa$ of the $\phi$-image of $|z|=r$ is
$$
\kappa = -q r^q (1+O(r^q)) < -\tfrac{q}{2} r^q
$$  
provided that $r$ is sufficiently small. Thus, the curvature of the $\phi_n$-image of $|z|=r$ is at most $-qr^q/4$ for $n \gg 1$. \vs 

We can now prepare for the proof of \thmref{A}. Choose $0<r_1<r_0/2$ such that the closed disk $\ov{\DD}(0,r_1)$ maps univalently inside $\DD(0,r_0)$ under $f$ and $f^{-1}$, hence under $f_n$ and $f_n^{-1}$ for $n \gg 1$. We may also take $r_1$ small enough to guarantee
\begin{equation}\label{rone}
\sup_{|z| \leq r_1} |f_n(z)-z| \leq \tfrac{1}{2} r_1 \qquad \text{for} \ n \gg 1. 
\end{equation}
This is possible since $f(z)-z=O(z^{q+1})$ by \eqref{nfoff}.   
We use positivity of $\myre(f')$ and $\myre(f'_n)$ throughout $\ov{\DD}(0,r_0)$ to conclude from Lemmas \ref{nofp} and \ref{uw} that the functions $u_f$ and $u_{f_n}$ are holomorphic in a neighborhood of $\ov{\DD}(0,r_1)$ and vanish at $0$ and on $\{ 0 \} \cup \sO_n$, respectively. \vs

More generally, for $0 \leq t \leq 1$ let $z_t:=(1-t)z+t f(z)$ and $z_{n,t}:=(1-t)z+t f_n(z)$ and, as in \eqref{uft}, consider the functions
\begin{align*}
u_t(z) = u_{f,t}(z)  & := -t + \int_{[z,z_t]} \om(s) \, ds \\ 
u_{n,t}(z) = u_{f_n,t}(z) & := -t + \int_{[z,z_{n,t}]} \om_n(s) \, ds, 
\end{align*}
which are holomorphic in a neighborhood of $\ov{\DD}(0,r_1)$, with $u_t(0)=0$ (see \remref{uft-hol}). 

\begin{lemma}\label{uun}
For every $\ve>0$ there is an $N \geq 1$ such that $|u_{n,t}(z)-u_t(z)| < \ve t$ whenever $(z,t) \in \ov{\DD}(0,r_1) \times [0,1]$ and $n \geq N$. 
\end{lemma}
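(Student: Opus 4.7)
The plan is to estimate $u_{n,t}-u_t$ on the circle $\{|z|=r_1\}$ and then propagate the bound to the closed disk $\ov{\DD}(0,r_1)$ via the maximum modulus principle. For each fixed $t$ the difference $u_{n,t}-u_t$ is holomorphic in $z$ on a neighborhood of $\ov{\DD}(0,r_1)$: this follows from \remref{uft-hol} applied separately to $f$ and to $f_n$, once \lemref{est2} places the bifurcated cycle $\sO_n$ inside $\DD(0,r_1/4)$ for $n\gg 1$, so that all fixed points of $f$ and of $f_n$ in $\ov{\DD}(0,r_1)$ are removable singularities of the corresponding functions.

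Fix $z$ with $|z|=r_1$. By \eqref{rone}, for $n\gg 1$ both segments $[z,z_t]$ and $[z,z_{n,t}]$ lie in $\ov{\DD}(z,r_1/2)$ and hence in the annulus $A:=\{w\in\CC : r_1/2\leq|w|\leq 3r_1/2\}$. For $n\gg 1$ the singular sets of $\om$ and $\om_n$, namely $\{0\}$ and $\{0\}\cup\sO_n$, are contained in $\DD(0,r_1/4)$, so both $1$-forms are holomorphic in a neighborhood of $A$. Adding and subtracting $\int_{[z,z_{n,t}]}\om$ and invoking Cauchy's theorem on the triangle with vertices $z,z_t,z_{n,t}\subset A$, I obtain the decomposition
\[
u_{n,t}(z)-u_t(z)=\int_{[z,z_{n,t}]}(\om_n-\om)(s)\,ds+\int_{[z_t,z_{n,t}]}\om(s)\,ds.
\]
Since $|z_{n,t}-z|=t\,|f_n(z)-z|$ and $|z_{n,t}-z_t|=t\,|f_n(z)-f(z)|$, the two integrals are bounded respectively by $t\cdot C\cdot\|\om_n-\om\|_A$ and $t\cdot\|\om\|_A\cdot\|f_n-f\|_{\ov{\DD}(0,r_1)}$, where $C:=\sup_n\sup_{\ov{\DD}(0,r_1)}|f_n-\id|<\infty$.

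The crux is the uniform convergence $\om_n\to\om$ on $A$. I will extract this not from the partial-fraction expression \eqref{omn}, where the diverging residue parameters $\La_n,\Mu_n\to\infty$ obscure the picture, but from the closed form $\om_n=(f_n'-1)/\bigl((f_n-z)\Log f_n'\bigr)$: on $A$ the factor $f_n-z$ is bounded away from zero for $n\gg 1$ since its zeros lie in $\DD(0,r_1/4)$, while $(f_n'-1)/\Log f_n'$ extends analytically across $\{f_n'=1\}$ to a bounded nonvanishing holomorphic function, thanks to $\myre f_n'>0$. Combined with the uniform convergence $f_n\to f$ and (via Cauchy estimates) $f_n'\to f'$ on $\ov{\DD}(0,r_0)$, this yields $\om_n\to\om$ uniformly on $A$.

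Putting everything together gives $|u_{n,t}(z)-u_t(z)|\leq \eta_n\,t$ with $\eta_n\to 0$, uniformly in $t\in[0,1]$ and in $z$ on $|z|=r_1$. For each fixed $t>0$ the maximum modulus principle applied to the holomorphic function $u_{n,t}-u_t$ propagates this bound to all of $\ov{\DD}(0,r_1)$; the case $t=0$ is trivial since $u_{n,0}=u_0\equiv 0$. Choosing $N$ so large that $\eta_N<\ve$ finishes the proof. The main obstacle is precisely the uniform convergence $\om_n\to\om$ on $A$; everything else is bookkeeping.
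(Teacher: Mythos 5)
Your proof is correct and follows essentially the same route as the paper's: reduce to the circle $|z|=r_1$ via the maximum principle, decompose the difference as $\int_{[z,z_{n,t}]}(\om_n-\om)+\int_{[z_t,z_{n,t}]}\om$ using Cauchy's theorem on the small triangle, confine all segments to the annulus $\{r_1/2\le|w|\le 3r_1/2\}$ via \eqref{rone}, and bound each integral by a quantity that tends to $0$ uniformly. The only place you go beyond the paper is in explicitly justifying the uniform convergence $\om_n\to\om$ on the annulus from the closed-form expression for the Buff form (the paper simply asserts it), which is a welcome addition but does not change the argument.
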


\begin{proof}
By the maximum principle, it suffices to check the uniform estimate for $(z,t) \in \bd \DD(0,r_1) \times [0,1]$. Given $\ve>0$, find $N_1$ such that $\sup_{|z|=r_1}|f_n(z)-f(z)|<\ve$ and therefore $\sup_{|z|=r_1} |z_{n,t}-z_t|< \ve t$ for $n \geq N_1$. The closed annulus $K := \{ z: r_1/2 \leq |z| \leq 3r_1/2 \}$ is contained in $\DD(0,r_0)$ and \eqref{rone} shows that there is an $N_2$ such that $[z,z_{n,t}] \cup [z,z_t] \cup [z_t,z_{n,t}] \subset K$ for $(z,t) \in \ov{\DD}(0,r_1) \times [0,1]$ and $n \geq N_2$. Finally, there is an $N_3$ such that $\sup_{z \in K} |\omega_n(z)-\omega(z)|< \ve$ for $n \geq N_3$. It follows that for $(z,t) \in \ov{\DD}(0,r_1) \times [0,1]$ and $n \geq \max \{N_1, N_2, N_3 \}$,    
\begin{align*}
|u_{n,t}(z)-u_t(z)| & \leq \left| \int_{[z,z_{n,t}]} (\om_n(s)-\om(s)) \, ds \right| + \left| \int_{[z_t,z_{n,t}]} \om(s) \, ds \right| \\
& \leq \tfrac{1}{2} r_1 t \cdot \ve + \ve t \cdot \sup_{s \in K} |\omega(s)| = C(r_1) \ \ve t. \qedhere  
\end{align*}
\end{proof}

\begin{proof}[Proof of \thmref{A}]
Given $\ve>0$ we can find $0<r<r_1$ such that for $n \gg 1$, 
$$
|u_{n,t}(z)|<\ve t \qquad \text{if} \ (z,t) \in \DD(0,r)\times [0,1]. 
$$
This follows by combining Lemmas \ref{whole} and \ref{uun}. In turn, it proves \eqref{vet} if we set $Z_{n,t}:=Z_{n,0}+t+u_{n,t}(z)$. The similar bound \eqref{vett} can be established by repeating the argument for the inverse map $f_n^{-1}$ and taking $r$ smaller if necessary, or by noting that 
$$
Z_{n,1-t}-Z_{n,1}+t = t - \int_{[z_{n,1-t},z_{n,1}]} \om_n(s) \, ds
$$ 
and proving in a manner similar to \lemref{whole} that the last expression has magnitude bounded by $\ve t$. 
\end{proof}

We end this section with a homotopy result that will be used in \S \ref{pfla} and \S \ref{tma}. It is a corollary of the following   

\begin{lemma}\label{homot}
There is an $0<r_2<r_1$ such that the following holds for $n \gg 1$: If three orbit points $z_0, z_1:=f_n(z_0), z_2:=f_n(z_1)$ in $\DD(0,r_2)$ are not fixed by $f_n$, then $[z_1,z_2]$ and $f_n([z_0,z_1])$ are homotopic in $\DD(0,r_1) \sm (\{ 0 \} \cup \sO_n )$. 
\end{lemma}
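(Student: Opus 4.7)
The plan is to derive the homotopy directly from the functoriality of the lifted dynamics $\tilde{f}_n$ constructed in \S \ref{bfbp}. The key observation is that, by the very definition of $\tilde{f}_n$, the unique lift under the universal covering $h_n : \HH \to U^\ast$ of a segment $[z,f_n(z)]$ starting at any fiber point $\zeta$ over $z$ ends at $\tilde{f}_n(\zeta)$. Applying this twice to the overlapping segments $[z_0,z_1]$ and $[z_1,z_2]$ will show that $[z_1,z_2]$ and $f_n([z_0,z_1])$ lift in $\HH$ to two curves joining the common points $\zeta_1 := \tilde{f}_n(\zeta_0)$ and $\zeta_2 := \tilde{f}_n(\zeta_1)$; since $\HH$ is simply connected, their projections back down to $U^\ast$ are automatically homotopic rel endpoints.

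The first step is to choose $0<r_2<r_1$ small enough that $f_n(\ov{\DD}(0,r_2)) \subset \DD(0,r_1)$ for every $n \gg 1$; this is already guaranteed by \eqref{rone} with $r_2 = r_1/2$. Set $U := \DD(0,r_1)$. For $n \gg 1$ the only fixed points of $f_n$ in $U$ are $\{0\} \cup \sO_n$, as follows from a Hurwitz count against the total fixed-point multiplicity $q+1$ of $f$ at $0$ (and from $|\la_n|\neq 1$, which forces $\la_n^q \neq 1$, so that $0$ is a simple fixed point of $f_n$). Thus $U^\ast = \DD(0,r_1) \sm (\{0\} \cup \sO_n)$ matches the notation of \S \ref{bfbp}, and the assumptions of \lemref{nofp} are in force for $f_n$ on $U$.

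Next I would verify the preconditions for the lifted dynamics. The points $z_0, z_1$ lie in $V^\ast = (U \cap f_n^{-1}(U)) \sm \{\text{fixed points}\}$, since each is in $\DD(0,r_2)$ (so its $f_n$-image stays in $U$) and neither is fixed. By \lemref{nofp}, both segments $[z_0,z_1]$ and $[z_1,z_2]$ lie in $U^\ast$; convexity of $\DD(0,r_2)$ and the choice of $r_2$ in fact place $[z_0,z_1] \subset V^\ast$ and $f_n([z_0,z_1]) \subset U$. Moreover $f_n([z_0,z_1])$ avoids $\{0\} \cup \sO_n$: since $f_n$ is univalent near each of these fixed points, any $f_n$-preimage of such a point near $0$ is the point itself, but $[z_0,z_1]$ contains no fixed points.

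Now for the endgame. Pick a lift $\zeta_0 \in \HH$ of $z_0$ under $h_n$ and set $\zeta_1 := \tilde{f}_n(\zeta_0)$, $\zeta_2 := \tilde{f}_n(\zeta_1)$. By the definition of $\tilde{f}_n$, the unique $h_n$-lift $\tilde{\eta}_0$ of $[z_0,z_1]$ starting at $\zeta_0$ ends at $\zeta_1$, and the unique lift of $[z_1,z_2]$ starting at $\zeta_1$ ends at $\zeta_2$. On the other hand, $\tilde{f}_n \circ \tilde{\eta}_0$, which is well defined because $\tilde{\eta}_0([0,1]) \subset h_n^{-1}(V^\ast)$, is a continuous path in $\HH$ from $\zeta_1$ to $\zeta_2$ that projects under $h_n$ to $f_n \circ h_n \circ \tilde{\eta}_0 = f_n([z_0,z_1])$, and hence is the unique $h_n$-lift of $f_n([z_0,z_1])$ from $\zeta_1$. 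The two lifts share endpoints in simply connected $\HH$, so $[z_1,z_2]$ and $f_n([z_0,z_1])$ are homotopic rel endpoints in $U^\ast$. The main obstacle, such as it is, is not the homotopy conclusion (which is essentially formal once the lifted dynamics is in place) but the bookkeeping of containments that ensures $h_n$, $\tilde{f}_n$, and every lift we invoke are defined along the full length of the curves involved; \thmref{A} is not actually required for this qualitative statement, though its cone picture provides the underlying geometric intuition.
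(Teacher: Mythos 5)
Your proof is correct, but it takes a genuinely different route from the paper's. The paper argues directly and quantitatively: it chooses $r_2$ small enough that $|f'_n-1|<1/10$ on $\DD(0,2r_2)$, shows that the $d/3$-neighborhood of $[z_1,z_2]$ (where $d=|z_1-z_2|$) is free of fixed points, and then observes that both $[z_1,z_2]$ and $f_n([z_0,z_1])$ lie in an explicit convex region inside that neighborhood, whence the homotopy is immediate. You instead work in the universal cover $\HH$ of $U^\ast$ and deduce the result from uniqueness of path lifts together with the semiconjugacy $h_n\circ\tilde f_n=f_n\circ h_n$: both curves lift to paths from $\zeta_1$ to $\zeta_2$, hence are homotopic rel endpoints downstairs because $\HH$ is simply connected. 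The only real content in your version is the bookkeeping you carry out — that $[z_0,z_1]\subset V^\ast$ so $\tilde f_n\circ\tilde\eta_0$ is defined, and that $f_n([z_0,z_1])\subset U^\ast$ via univalence — and that is handled correctly. The trade-off: the paper's argument is self-contained and produces an explicit fixed-point-free convex region (potentially useful for finer estimates), while yours is shorter and more structural, exhibiting the lemma as a formal consequence of the lifted-dynamics construction of \S3.3; you are also right that \thmref{A} is not needed for either approach.
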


\begin{proof}
Choose $0<r_2<r_1/2$ small enough so that for $n \gg 1$,
$$
|f'_n(z)-1|<\tfrac{1}{10}, \quad \text{hence} \quad 
\tfrac{9}{10}\, |z-w| \leq |f_n(z)-f_n(w)|\leq \tfrac{11}{10}\, |z-w| 
$$
whenever $z,w \in \DD(0,2r_2)$. Take $z_0,z_1,z_2$ in $\DD(0,r_2)$ as above and let $d:=|z_1-z_2|$. We claim that there is no fixed point of $f_n$ in the $d/3$-neighborhood of the segment $[z_1,z_2]$. In fact, since $\myre(f'_n)>9/10$ throughout $\DD(0,r_2)$, the same estimate as in the proof of \lemref{nofp} shows that $|f_n(w)-w| \geq 9d/10$ for every $w \in [z_1,z_2]$. If there were a fixed point $p$ of $f_n$ and some $w \in [z_1,z_2]$ with $|w-p|<d/3$, then $|p|<r_2+d/3<2r_2$ and therefore 
$$
|f_n(w)-w| \leq |f_n(w)-f_n(p)|+|p-w| \leq \tfrac{21}{10} |w-p|< \tfrac{7}{10} d,
$$      
which would be a contradiction. Now for every $w \in [z_0,z_1]$, 
$$
f_n(w)=z_1 + \int_{[z_0,w]} f'_n(\zeta) \, d\zeta = z_1 + (w-z_0) + \int_{[z_0,w]} (f'_n(\zeta)-1) \, d\zeta,
$$
where the last integral has absolute value $<|w-z_0|/10$. Thus, both $f_n([z_0,z_1])$ and $[z_1,z_2]$ are contained in the convex region 
$$
\bigcup_{\zeta \in [z_1,2z_1-z_0]} \DD \big( \zeta,\tfrac{1}{10}|\zeta-z_1| \big). 
$$
Since $|z_0-z_1|/10 \leq d/9$, this region is contained in the $d/3$-neighborhood of $[z_1,z_2]$ and therefore is free from fixed points of $f_n$. The result follows since by the choice $r_2<r_1/2$, the $d/3$-neighborhood of $[z_1,z_2]$ is contained in $\DD(0,r_1)$.         
\end{proof} 

\begin{corollary}\label{joo}
Let $r_2$ be as in \lemref{homot}, $0<r<r_2$ and $n \gg 1$. Take a (finite or infinite) backward orbit $z_j:=f_n^{-j}(z_0)$ for $0 \leq j < k \leq \infty$ in $\DD(0,r) \sm (\{ 0 \} \cup \sO_n )$. Let $\eta_j$ be a curve in $\DD(0,r) \sm (\{ 0 \} \cup \sO_n )$ joining $z_{j-1}$ to $z_j$ such that $f_n^{-1}(\eta_{j-1})=\eta_j$ for every $2 \leq j < k$. If $\eta_1$ and $[z_0,z_1]$ are homotopic in $\DD(0,r) \sm (\{ 0 \} \cup \sO_n )$, so are $\eta_j$ and $[z_{j-1},z_j]$ for every $1 \leq j < k$.   
\end{corollary}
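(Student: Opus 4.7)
The proof proceeds by induction on $j$, the base case $j=1$ being the hypothesis. For the inductive step I assume $\eta_j$ and $[z_{j-1}, z_j]$ are homotopic with fixed endpoints in $\DD(0, r) \setminus (\{0\} \cup \sO_n)$, and aim to obtain the same conclusion for $\eta_{j+1}$ and $[z_j, z_{j+1}]$. The first step is to apply \lemref{homot} to the three consecutive orbit points $z_{j+1}, z_j = f_n(z_{j+1}), z_{j-1} = f_n(z_j)$, which lie in $\DD(0, r) \subset \DD(0, r_2)$ and are not fixed under $f_n$. This yields a homotopy between $[z_j, z_{j-1}]$ and $f_n([z_{j+1}, z_j])$ in $\DD(0, r_1) \setminus (\{0\} \cup \sO_n)$. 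Concatenating with the inductive hypothesis (reversed), which lives in $\DD(0, r) \setminus (\{0\} \cup \sO_n) \subset \DD(0, r_1) \setminus (\{0\} \cup \sO_n)$, produces a homotopy from $\eta_j$ to $f_n([z_j, z_{j+1}])$ in the same punctured disk.

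Next I would pull this homotopy back by $f_n^{-1}$, which by the choice of $r_1$ is univalent on $\ov{\DD}(0, r_1)$ with image inside $\DD(0, r_0)$. The crucial observation here is that $f_n^{-1}(\{0\} \cup \sO_n) = \{0\} \cup \sO_n$, since $0$ is fixed and $\sO_n$ is a $q$-cycle of $f_n$. Thus the pulled-back homotopy lives in $f_n^{-1}(\DD(0, r_1)) \setminus (\{0\} \cup \sO_n) \subset \DD(0, r_0) \setminus (\{0\} \cup \sO_n)$ and connects $\eta_{j+1} = f_n^{-1}(\eta_j)$ with $[z_j, z_{j+1}] = f_n^{-1}(f_n([z_j, z_{j+1}]))$, with the correct endpoints fixed throughout.

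Finally I descend from the ambient $\DD(0, r_0)$ back to $\DD(0, r)$. Both endpoints of the homotopy already lie in $\DD(0, r) \setminus (\{0\} \cup \sO_n)$, by hypothesis on $\eta_{j+1}$ and by convexity for the segment. The inclusion of this smaller punctured disk into the larger one is a homotopy equivalence, since both are open disks minus the same finite set of $q+1$ punctures, with $\pi_1$ free on the canonical loops around each puncture; equivalently, the winding numbers of the loop $\eta_{j+1} \cdot [z_j, z_{j+1}]^{-1}$ around each point of $\{0\} \cup \sO_n$ are the same whether computed in the smaller or larger punctured disk, so the loop is already null-homotopic in $\DD(0, r) \setminus (\{0\} \cup \sO_n)$. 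This closes the induction. I do not anticipate a genuine obstacle here: the two items requiring care are keeping the nested containments $\DD(0, r) \subset \DD(0, r_2) \subset \DD(0, r_1) \subset \DD(0, r_0)$ straight so that each tool applies on the correct domain, and exploiting the invariance of $\{0\} \cup \sO_n$ under $f_n^{-1}$, without which the pull-back step would produce a punctured disk with the wrong puncture set.
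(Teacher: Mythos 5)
Your proof is correct and follows essentially the same route as the paper: apply \lemref{homot} to the triple of consecutive orbit points, concatenate with the inductive hypothesis, pull back the resulting homotopy under $f_n^{-1}$ (using univalence of $f_n^{-1}$ on $\ov{\DD}(0,r_1)$ into $\DD(0,r_0)$ and invariance of the puncture set $\{0\}\cup\sO_n$), and then descend to $\DD(0,r)\sm(\{0\}\cup\sO_n)$ via the deformation retraction. You spell out two points the paper leaves implicit — the $f_n^{-1}$-invariance of $\{0\}\cup\sO_n$ and the winding-number justification that the inclusion of punctured disks is a homotopy equivalence — which is careful and correct, but does not change the underlying argument.
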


\begin{proof}
Let us write $\sim$ for homotopy. By \lemref{homot}, $[z_0,z_1] \sim f_n([z_1,z_2])$ in $\DD(0,r_1) \sm (\{ 0 \} \cup \sO_n )$, so by our assumption $\eta_1 \sim f_n([z_1,z_2])$ in $\DD(0,r_1) \sm (\{ 0 \} \cup \sO_n )$. Taking the image of this homotopy under $f_n^{-1}$ then gives $\eta_2 \sim [z_1,z_2]$ in $\DD(0,r_0) \sm (\{ 0 \} \cup \sO_n )$ and therefore in $\DD(0,r) \sm (\{ 0 \} \cup \sO_n )$ since the latter is a deformation retraction of the former. Now proceed inductively. 
\end{proof}

\section{Invariant curves and proof of \thmref{B}}\label{apphl}

\subsection{Non-tangential multiplier approach}\label{ntma}

We continue working in the setup of \S \ref{pndp}, namely, a non-degenerate parabolic map $g(z)=\la z+O(z^2)$ with the multiplier $\la$ a primitive $q$th root of unity, normalized so that $f:=g^{\circ q}$ takes the form \eqref{nfoff}, and a sequence of perturbations $g_n(z)=\la_n z+O(z^2)$ with $|\la_n| \neq 1$ such that $g_n \to g$ uniformly on $\ov{\DD}(0,r_0)$. The fixed point $0$ of $g$ bifurcates into a simple fixed point of $g_n$ at $0$ of multiplier $\la_n$ and a nearby $q$-cycle $\sO_n = \{ p_{n,1}, \ldots, p_{n,q} \}$ of multiplier $\mu_n$. The points of $\sO_n$ are asymptotically the vertices of a regular $q$-gon, as asserted in \lemref{est2}. \vs  

We will be interested in the case where the multipliers tend to $1$ non-tangentially. As before, set $f_n:=g_n^{\circ q}$, $\La_n = 1/\Log \lambda_n^q$ and $\Mu_n := 1/\Log \mu_n$. By continuity of the r\'esidu it\'eratif (see \eqref{resitcont}),
$$
\resit(f_n,0)+\sum_{j=1}^q \resit(f_n,p_{n,j}) \to \resit(f,0), 
$$  
which, in view of \eqref{1/a=resit}, yields 
\begin{equation}\label{sumind}
\La_n + q \Mu_n \to \rho = \frac{q+1}{2}-b.   
\end{equation}
as $n \to \infty$. This shows $\La_n$ and $-q\Mu_n$ have similar asymptotic behaviors. It follows from \eqref{ntaeq} that $\lambda_n^q \to 1$ non-tangentially if and only if $\mu_n \to 1$ non-tangentially. The non-tangential approach implies that $0$ and $\sO_n$ cannot be both repelling or both attracting. In fact, if $|\lambda_n|>1, |\mu_n|>1$ for $n \gg 1$, then $\myre(\La_n)$ and $\myre(\Mu_n)$ are positive and therefore they both tend to $+\infty$. This would be impossible by \eqref{sumind}. Similarly, if $|\lambda_n|<1, |\mu_n|<1$ for $n \gg 1$, then $\myre(\La_n)$ and $\myre(\Mu_n)$ are negative and therefore they both tend to $-\infty$, which would be impossible again. It follows that if $\lambda_n^q, \mu_n \to 1$ non-tangentially, then, after passing to a subsequence, we may assume $|\lambda_n|<1, |\mu_n|>1$, or $|\lambda_n|>1, |\mu_n|<1$ for $n \gg 1$. \vs

The assumption that $\lambda_n^q, \mu_n \to 1$ non-tangentially also puts a restriction on the $q$-cycle $\sO_n$. In fact, under this assumption there is a small $\theta_0>0$ such that one of the following holds (compare \figref{sectors}): \vs

\begin{figure}[]
	\centering
	\begin{overpic}[width=0.75\textwidth]{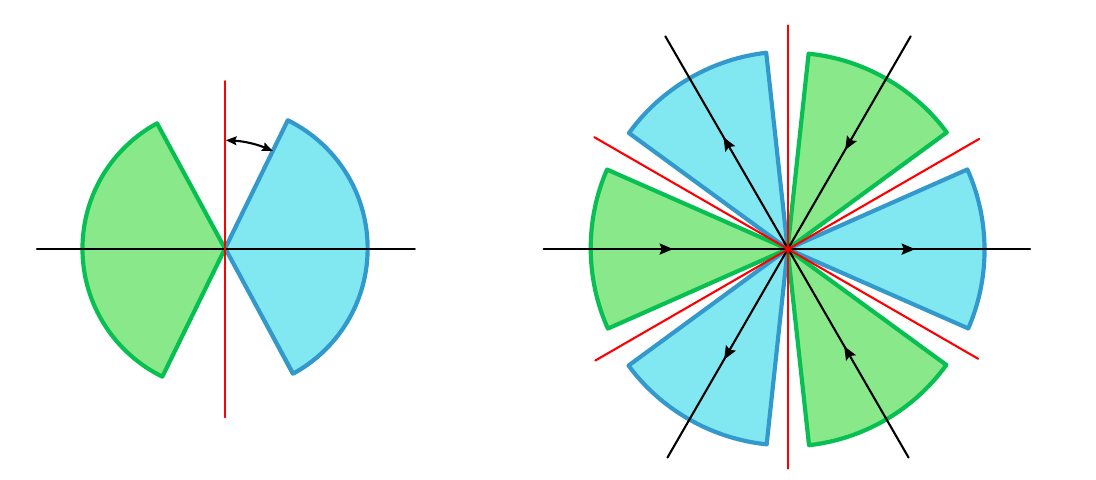}
\put (22,32) {\footnotesize $\theta_0$}
\put (61,31) {\footnotesize $S^+_1$}
\put (66,7) {\footnotesize $S^+_2$}
\put (84,23) {\footnotesize $S^+_3$}
\put (75,34) {\footnotesize $S^-_1$}
\put (56,18) {\footnotesize $S^-_2$}
\put (80,10) {\footnotesize $S^-_3$}
	\end{overpic}
\caption{\sl Left: The sectors to which $1-\la_n^q$ belongs in a non-tangential approach. Right: The repelling sectors $S^+_j$ (in blue) and the attracting sectors $S^-_j$ (in green) to which the $q$-cycle $\sO_n$ belongs. Here $q=3$.}
\label{sectors}
\end{figure}

$\bullet$ {\bf Case A.} $1-\la_n^q$ belongs to the sector $\{ r \e^{\ii \theta}: 0<r<1, |\theta|<\pi/2-\theta_0 \}$ for $n \gg 1$. Then $|\la_n|<1, |\mu_n|>1$. It follows from \lemref{est2} that each point of the repelling $q$-cycle $\sO_n$ belongs to precisely one of the sectors 
\begin{equation}\label{s+}
S^+_j := \left\{ r \e^{\ii \theta}: 0<r<1, \left| \theta - \frac{2j\pi}{q} \right| < \frac{1}{q} \left( \frac{\pi}{2} - \frac{\theta_0}{2} \right) \right\} \qquad (1 \leq j \leq q). 
\end{equation}
We label the points of $\sO_n$ such that $p_{n,j} \in S^+_j$. Note that the mid-axis of $S_j^+$ is the $j$th repelling direction of the parabolic point $0$ for $f$. \vs 

$\bullet$ {\bf Case B.} $1-\la_n^q$ belongs to the sector $\{ r \e^{\ii \theta}: 0<r<1, |\theta-\pi|<\pi/2-\theta_0 \}$ for $n \gg 1$. Then $|\lambda_n|>1, |\mu_n|<1$. It follows from \lemref{est2} that each point of the attracting $q$-cycle $\sO_n$ belongs to precisely one of the sectors 
\begin{equation}\label{s-}
S^-_j := \left\{ r \e^{\ii \theta}: 0<r<1, \left| \theta - \frac{(2j-1)\pi}{q} \right| < \frac{1}{q} \left( \frac{\pi}{2} - \frac{\theta_0}{2} \right) \right\} \qquad (1 \leq j \leq q). 
\end{equation}
We similarly label the points of $\sO_n$ such that $p_{n,j} \in S^-_j$. Now the mid-axis of $S_j^-$ is the $j$th attracting direction of the parabolic point $0$ for $f$. 
 
\subsection{Proof of \thmref{B}}\label{pfla}

The assumption $\la_n^q, \mu_n \to 1$ non-tangentially means that $\myim(\La_n)/\myre(\La_n)$ and $\myim(\Mu_n)/\myre(\Mu_n)$ are bounded sequences (see \eqref{ntaeq}). Thus, there is a $\delta>0$ such that the unimodular complex numbers 
$$
\alpha_n := \ii \frac{\La_n}{|\La_n|} \qquad \text{and} \qquad \beta_n := \ii \frac{\Mu_n}{|\Mu_n|} 
$$
have their arguments in $[-\pi+\delta,-\delta] \cup [\delta, \pi-\delta]$. Fix an $\ve$ so that $0<\ve<\sin \delta$. Take a sufficiently small radius $r>0$ subject to the following conditions: 
\begin{enumerate}
\item[$\bullet$]
\thmref{A} holds for $r$ and the above choice of $\ve$. \vs 
\item[$\bullet$]
$0<r<r_2$, so \corref{joo} holds for $r$. \vs 
\item[$\bullet$]
The curvature of the lift of the positively oriented circle $|z|=r$ under the rectifying coordinate $\phi_n$ for $\om_n$ is strictly negative, say $<-q r^q/4$, for $n \gg 1$ (see \S \ref{ldfn}). \vs 
\item[$\bullet$]
$\ga \cap \ov{\DD}(0,r)$ is contained in one of the repelling sectors of \eqref{s+}, say in $S^+_q$, the sector with the real segment $]0,1[$ as its mid-axis. This is possible because $\ga$ is asymptotic to a repelling direction of the parabolic point $0$ of $f$. Take $t_0=t_0(r)<0$ such that $|\ga(t_0)|=r$ and $\ga(]-\infty,t_0[) \subset S^+_q \cap \DD(0,r)$. \vs
\item[$\bullet$]
For the branch of $\phi$ in \eqref{ZZ} with $\log r$ real, 
$|\phi(r\e^{\ii \theta})+r^{-q} \e^{-\ii q \theta}/q| \leq r^{-q}$ for all $\theta \in [-\pi,\pi]$. Since $|\ga(t_0)-r|=o(r)$, we can guarantee $|\phi(\ga(t_0))+r^{-q}/q| \leq 2r^{-q}$.\footnote{We can do much better, but these bounds will be enough for our purposes.}\vs 
\end{enumerate}

\begin{figure}[t]
	\centering
	\begin{overpic}[width=\textwidth]{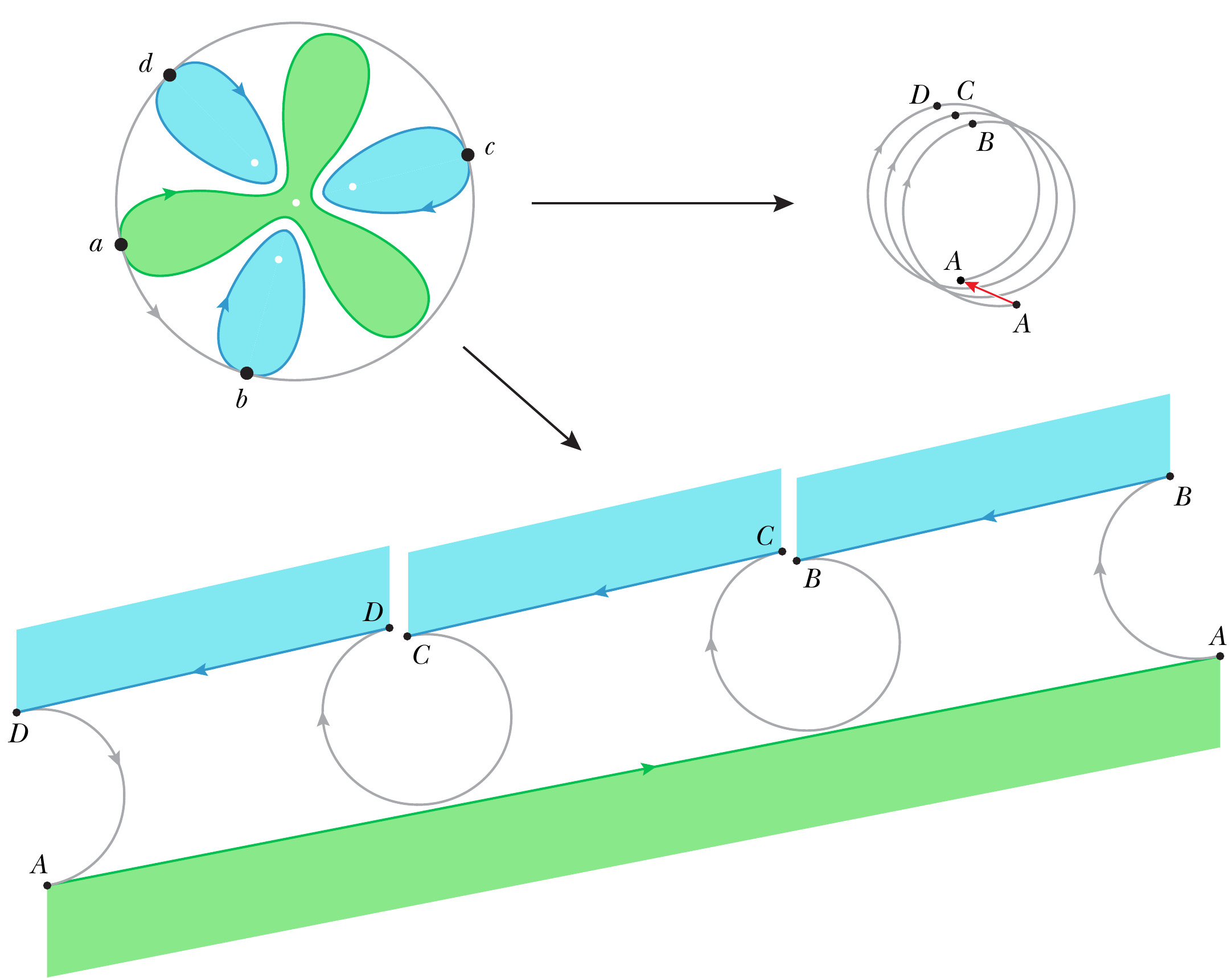}
		\put (84.2,55) {\footnotesize \color{red}{$2\pi \ii \, (\La_n+3\Mu_n)$}}
      \put (48,18.3) {\footnotesize \color{mygreen}{$-2\pi \ii\La_n$}}
      \put (76.5,34.7) {\footnotesize \color{myblue}{$-2\pi \ii\Mu_n$}}
      \put (45,28.5) {\footnotesize \color{myblue}{$-2\pi \ii\Mu_n$}}
      \put (12.5,22.3) {\footnotesize \color{myblue}{$-2\pi \ii\Mu_n$}}
		\put (22.3,62) {\tiny $0$}
		\put (3.5,68) {\footnotesize $|z|=r$}
		\put (52,64) {\footnotesize $\phi_n$}
		\put (42.3,48) {\footnotesize $\phi_n$}
      \put (13,60) {\footnotesize $W_{n,0}$}
      \put (19.5,53) {\footnotesize $W_{n,1}$}
      \put (32,65) {\footnotesize $W_{n,2}$}
      \put (15,70) {\footnotesize $W_{n,3}$}
	\end{overpic}
	\caption{\footnotesize Illustration of two lifts under the rectifying coordinate $\phi_n$ for $q=3$ and the case $|\la_n|<1, |\mu_n|>1$. Top left: The disk $\DD(0,r)$ and the canonical neighborhoods $W_{n,j}$ for $0 \leq j \leq 3$. Top right: Under $\phi_n$ the positively oriented circle $|z|=r$, starting and ending at $a$, lifts to a spiral of negative curvature making three almost round turns of size comparable to $r^{-3}$, with a net translation $2\pi \ii(\La_n+3\Mu_n)$. Bottom: The null-homotopic loop, starting and ending at $a$, consisting of the negatively oriented boundary $\bd W_{n,0}$ followed by $ab, \bd W_{n,1}, bc, \bd W_{n,2}, cd, \bd W_{n,3}, da$, lifts to a closed curve. Moving around this closed curve which starts and ends at the lower left $A$ involves one translation by $-2\pi \ii \La_n$ (the lift of $\bd W_{n,0}$) and three translations by $-2\pi \ii \Mu_n$ (the lifts of $\bd W_{n,j}$ for $1 \leq j \leq 3$), interjected by almost round arcs which are translated pieces of the top right spiral. The three blue half-planes obtained by lifting $W_{n,1}, W_{n,2}, W_{n,3}$ belong to different sheets of the Riemann surface of $\phi_n$.}  
\label{sheets}
\end{figure}

Let $W_{n,0} \subset \DD(0,r)$ be the canonical neighborhood of $p_{n,0}:=0$ for the rotated vector field $\alpha_n \, \chi_n$. Here $\chi_n=\chi_{f_n}$ is the Buff vector field associated with $f_n$. Similarly for $1 \leq j \leq q$ let $W_{n,j} \subset \DD(0,r)$ be the canonical neighborhood of $p_{n,j}$ for the rotated vector field $\beta_n \, \chi_n$. By \corref{jcs} $W_{n,0}, \ldots, W_{n,q}$ have pairwise disjoint closures and the boundary of each meets the circle $|z|=r$ tangentially. Recall that each punctured neighborhood $W_{n,j} \sm \{ p_{n,j} \}$ lifts under $Z=\phi_n(z)$ to a half-plane $H_{n,j}$ of the form $\{ Z: \myim(Z/\alpha_n) > \con \}$ if $j=0$ and $\{ Z: \myim(Z/\beta_n) > \con \}$ if $1 \leq j \leq q$. From this picture it also follows that the boundary of $W_{n,0}$ meets the circle $|z|=r$ tangentially in at most $q$ points while the boundary of $W_{n,j}$ for $1 \leq j \leq q$ is tangent to this circle at a unique point. In fact, since the curve $\theta \mapsto \phi_n(r\e^{\ii \theta})$ has strictly negative curvature, its unit tangent vector turns monotonically clockwise $q$ times on every half-open interval $I$ of length $2\pi$. Thus, there are precisely $q$ angles $\theta \in I$ where a half-plane with the slope determined by $\alpha_n$ or $\beta_n$ can be externally tangent to this curve (compare \figref{sheets}).%
\vs

Every canonical neighborhood $W_{n,j}$ corresponding to a repelling fixed point is backward invariant under $f_n$ for $n \gg 1$. To see this, take $z \in W_{n,j} \sm \{ p_{n,j} \}$ and let $Z=\phi_n(z) \in H_{n,j}$ be a lift of $z$. The point $p_{n,j}$ being repelling means the corresponding $\alpha_n$ or $\beta_n$ has positive imaginary part, so its argument is in $[\delta,\pi-\delta]$. Since $\sin^{-1}\ve<\delta$ by our choice of $\ve$, the cone $\sC(Z):=\sC^-_\ve(Z)$ defined in \eqref{coneC} is contained in $H_{n,j}$. By \thmref{A} the lift of $[z,f_n^{-1}(z)]$ under $\phi_n$ that starts at $Z$ and ends at $F_n^{-1}(Z)$ is contained in $\sC(Z)$, hence in $H_{n,j}$, for $n \gg 1$. Projecting down by $\psi_n=\phi_n^{-1}$, we obtain $[z,f_n^{-1}(z)] \subset W_{n,j} \sm \{ p_{n,j} \}$, proving backward invariance. \vs  

A similar argument shows that every $W_{n,j}$ corresponding to an attracting fixed point is forward-invariant under $f_n$ for $n \gg 1$. \vs 

For the rest of the proof we fix our ideas by assuming $|\la_n|<1,|\mu_n|>1$, so the origin is attracting while the $q$-cycle $\sO_n = \{ p_{n,1}, \ldots, p_{n,q} \}$ is repelling. This corresponds to Case A at the end of \S \ref{ntma} (the argument in Case B where $|\la_n|>1,|\mu_n|<1$ is completely similar). Thus, $p_{n,j}$ stays in the sector $S^+_j$ for $1 \leq j \leq q$ and $n \gg 1$. Set $S=S^+_q \cap \DD(0,r)$ and recall that there is a $t_0<0$ such that $|\ga(t_0)|=r$ and $\ga(]-\infty,t_0[) \subset S$. Let us fix a large positive integer $N_0$ whose suitable size will be determined later (the important point is that $N_0$ is independent of $n$). Since $\ga_n \to \ga$ uniformly on compact subsets of $]-\infty,0]$, we can find a sequence $t_n \to t_0$ such that $|\ga_n(t_n)|=r$ and $\ga_n([t_n-N_0,t_n[) \subset S$ for $n \gg 1$. Take a single-valued branch of $\phi$ in some neighborhood of $\ga([t_0-1,t_0[)$ with $|\phi(\ga(t_0))+r^{-q}/q|<2r^{-q}$, and find single-valued branches of $\phi_n$ such that $\phi_n \to \phi$ uniformly in this neighborhood. This branch of $\phi_n$ can be continued analytically in the punctured domains $S \sm \{ p_{n,q} \}$ and $W_{n,q} \sm \{ p_{n,q} \}$, both with monodromy $Z \mapsto Z+2\pi \ii \, \Mu_n$. Thus, there is a planar domain $\Om_n$ containing the half-plane $H_{n,q}=\phi_n(W_{n,q} \sm \{ p_{n,q} \})$ and an infinite cyclic covering map $\psi_n:=\phi_n^{-1} : \Om_n \to (W_{n,q} \cup S) \sm \{ p_{n,q} \}$ with the deck group generated by the translation $Z \mapsto Z+ 2\pi \ii \, \Mu_n$.\footnote{The choice of $(W_{n,q} \cup S) \sm \{ z_n \}$ and its lift $\Om_n$ is made for our particular argument here. There are other choices of punctured neighborhoods of $p_{n,q}$ whose lift give much larger domains for the map $\psi_n$.} Note that the line $\bd H_{n,q}$ is tangent to the curve $\theta \mapsto \phi_n(r\e^{\ii \theta})$ for $-\pi \leq \theta \leq \pi$,  with the tangency point having modulus comparable to $r^{-q}$ (see \figref{lifts}). \vs   

Now consider the lifted curves $\Ga:=\phi \circ \ga$ and $\Ga_n:=\phi_n \circ \ga_n$ obtained by analytic continuation of the single-valued branches of $\phi, \phi_n$ along $\ga$ and $\ga_n$. Thus, $\Ga$ is defined on $]-\infty,t_0]$ and $\Ga_n$ is defined at least on $[t_n-N_0,t_n]$. The fundamental segment $J_n:=\Ga_n([t_n-1,t_n[)$ converges to the fundamental segment $\Ga([t_0-1,t_0[)$ as $n \to \infty$, so it remains in a fixed neighborhood of $\Ga(t_0)$, hence in the disk $\DD(-r^{-q}/q,R)$ for some $R>0$ independent of $n$. Setting $Z_0:=-r^{-q}/q+R/\ve$, it follows that $J_n \subset \sC(Z_0)$ for $n \gg 1$. The same argument as before based on $\sin^{-1}\ve < \de$ now shows that the `tail' of the cone $\sC(Z_0)$ is eventually contained in the half-plane $H_{n,q}$, that is, there exists an $x_0 \in \RR$ such that 
$$
\sC(Z_0) \cap \{ Z: \myre(Z)<x_0 \} \subset H_{n,q} \qquad \text{for} \ n \gg 1 
$$ 
(compare \figref{lifts}). Recall that $F_n^{-1}$ is the lifted dynamics of $f_n^{-1}$ obtained by integrating along the segments $[z,f_n^{-1}(z)]$, as described in \S \ref{ldfn}. By \corref{joo}, for $n \gg 1$, if $t_n-N_0+1 \leq t \leq t_n$ then the segment $[\ga_n(t-1),\ga_n(t)]$ and the restriction of $\ga_n$ to $[t-1,t]$ are homotopic in $\DD(0,r) \sm (\sO_n \cup \{ 0 \})$. It follows that       
$$
F_n^{-1}(\Ga_n(t))=\Ga_n(t-1) \qquad \text{if} \ t_n-N_0+1 \leq t \leq t_n.
$$  
Since $\ga_n([t_n-N_0,t_n[) \subset S$ for $n \gg 1$, 
the first $N_0-1$ iterates of $J_n$ under $F_n^{-1}$ are defined and $\bigcup_{j=0}^{N_0-1} F_n^{-j}(J_n) = \Ga_n([t_n-N_0,t_n[)$ is contained in $\Om_n \cap \sC(Z_0)$ by \thmref{A}. Moreover, $\myre(Z) \leq R_0:=R-r^{-q}/q$ for all $Z \in J_n$, so inductively $\myre(Z) \leq R_0-j(1-\ve)$ for all $Z \in F_n^{-j}(J_n)$. If we chose $N_0$ large enough so that $R_0-N_0(1-\ve) <x_0$, it would follow that all further iterates $F_n^{-j}(J_n)$ for $j \geq N_0$ are defined and contained in $H_{n,q} \cap \sC(Z_0)$. Since $\sin^{-1} \ve < \de$, the Euclidean distance from $\Ga_n(t)$ to $\bd H_{n,q}$ grows linearly in $t$ as $t \to -\infty$. Projecting back to the dynamical plane by $\psi_n$, this implies that the $|\om_n|$-distance from $\ga_n(t)$ to $\bd W_{n,q}$ tends to $+\infty$ as $t \to -\infty$, which shows $\lim_{t \to -\infty} \ga_n(t)=p_{n,q}$ (alternatively, we could use the fact that $f_n^{-1}: W_{n,q} \to W_{n,q}$ strongly contracts the hyperbolic metric to reach the same conclusion). This proves part (i) of \thmref{B}. \vs

\begin{figure}[t]
	\centering
	\begin{overpic}[width=\textwidth]{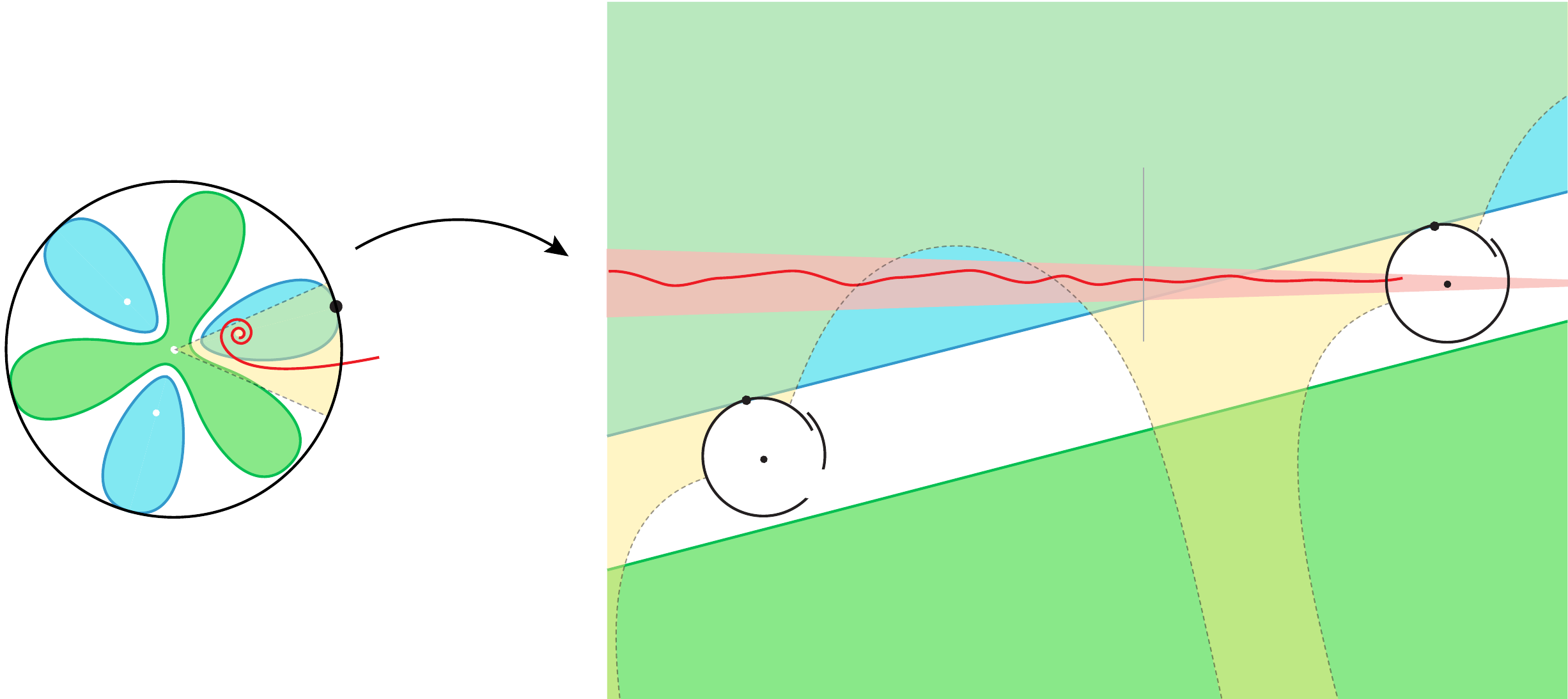}
		\put (9.5,21.5) {\tiny $0$}
      \put (92,24.3) {\tiny $0$}
		\put (0,11) {\footnotesize $|z|=r$}
      \put (42,29.3) {\footnotesize $\sC(Z_0)$}
		\put (28.5,32) {\footnotesize $\phi_n$}
      \put (25,22) {\color{red}{\footnotesize $\ga_n$}}
      \put (82,24) {\color{red}{\footnotesize $\Ga_n$}}
      \put (16.2,22.8) {\tiny $p_{n,q}$}
		\put (68,34.5) {\tiny $\myre(Z)=x_0$}
      \put (90,41) {\footnotesize $\Om_n$}
     \put (21.5,17) {\footnotesize $S$}
     \put (47,13) {\tiny $-2\pi \ii \Mu_n$}
	\end{overpic}
	\caption{\footnotesize Illustration of the proof of \thmref{B}. The domain $\Om_n$ is the union of the blue half-plane $H_{n,q}$ and the yellow region $\phi_n(S \sm \{ p_{n,q} \})$.}  
	\label{lifts}
\end{figure}

Let us now prove part (ii). Since $\lim_{t \to -\infty} \ga(t)=0$ and $\ga_n \to \ga$ uniformly on compact subsets of $]-\infty,0]$, it suffices to show that for every radius $r'$ with $0<r'<r$ there is a $t^*=t^*(r')<0$ such that $|\ga_n(t)|<r'$ for $t<t^*$ and $n \gg 1$. Since $\om_n \to \om$ uniformly on the closed annulus $A:=\{ z: r' \leq |z| \leq r \}$ as $n \to \infty$, the metrics $|\om_n|$ are uniformly bounded on $A$ for $n \gg 1$. Hence, there is a constant $C=C(r')>0$ such that 
\begin{equation}\label{diam}
\diam_{|\om_n|}(A)<C \qquad \text{for} \ n \gg 1.
\end{equation}
Recall from part (i) that $Z \in \sC(Z_0)$ and $\myre(Z)<x_0$ imply $Z \in H_{n,q}$. Using $\sin^{-1}\ve<\de$ once more, we see that there is an $x_1<x_0$ independent of $n$ such that the Euclidean distance from any $Z \in \sC(Z_0)$ to $\bd H_{n,q}$ is $>2C$ whenever $\myre(Z)<x_1$. As in the proof of part (i), the lifted curve $t \mapsto \Ga_n(t)$ stays in $\sC(Z_0)$ for $t<t_n$. If we choose the integer $N_1$ large enough so that $R_0-N_1(1-\ve)<x_1$, it follows that $\myre(\Ga_n(t))<x_1$ for $t<t_n-N_1$. This implies that the Euclidean distance from $\Ga_n(t)$ to $\bd H_{n,q}$ is $>2C$ for $t<t_n-N_1$. Projecting back to the dynamical plane by $\psi_n$, it follows that the $|\om_n|$-distance from $\ga_n(t)$ to $\bd W_{n,q}$ is $>2C$ for $t<t_n-N_1$. Since $\bd W_{n,q}$ meets the circle $|z|=r$ and $t_n \to t_0$, the choice of $C$ satisfying \eqref{diam} shows that $|\ga_n(t)|<r'$ whenever $t<t_0-N_1-1$ and $n \gg 1$. This completes the proof.

\subsection{Tangential multiplier approach}\label{tma}

Without the assumption of non-tangential multiplier approach in \thmref{B} the $f_n$-invariant curves $\ga_n$ may take longer and longer to land at a repelling point near $0$ or they may have a ``near miss'' without landing and eventually leave the neighborhood of $0$. In such cases the Hausdorff limit of $\ga_n$ will be strictly larger than $\ga$. This phenomenon can already be observed in polynomial dynamics where the Hausdorff limit of external rays along a convergent sequence $P_n \to P$ can contain invariant arcs in the filled Julia set of $P$. These are the heteroclinic and homoclinic arcs studied in \cite{PZ1}. \vs

For an invariant embedded arc that has a near miss and eventually leaves the neighborhood of $0$ one can use the theory of parabolic implosions to show that the arc must pass through one of the $q$ ``gates'' between $0$ and the points of the bifurcated cycle $\sO_n$ (compare \figref{gate}). Below we show how the setup developed in this section gives a simple proof of this fact. We continue working in the situation of \thmref{B}, with $\ga, \ga_n$ being invariant embedded arcs under $f, f_n$ such that $\ga$ lands at the parabolic point $0$ and $\ga_n \to \ga$ uniformly on compact subsets of $]-\infty,0]$, but the multipliers $\la_n^q,\mu_n$ are now tending to $1$ tangentially. The assumption that $\ga, \ga_n$ are embedded arcs is for convenience and sufficient for standard applications. However, one can properly formulate and prove a similar statement for invariant curves or even more general continua.     

\begin{figure}[t]
	\centering
	\begin{overpic}[width=0.45\textwidth]{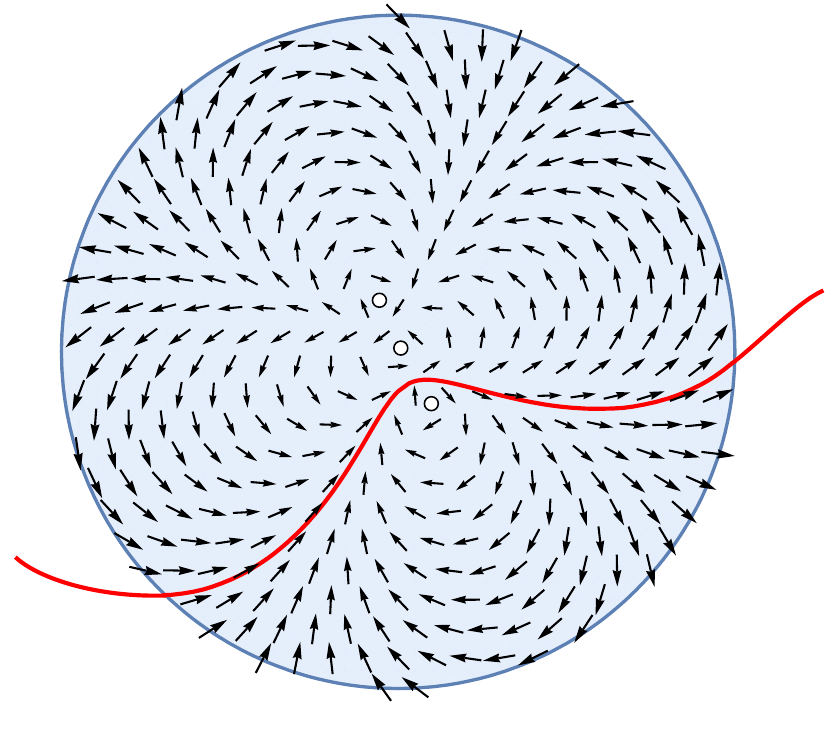} 
\put (96,47) {\footnotesize \color{red}{$\ga_n$}}
	\end{overpic}
	\caption{\footnotesize The $f_n$-invariant curve $\ga_n$ has a near miss and leaves the neighborhood of $0$ without landing at any of the three fixed points of $f_n$. In doing so $\ga_n$ has to pass through one of the two ``gates'' between $0$ and the pair of bifurcated fixed points. In this example $q=2$ and the arrows show the Buff vector field $\chi_{f_n}$.}  
	\label{gate}
\end{figure}
    
\begin{theorem}[Near miss arcs pass through the gates]
Fix a small $r>0$ and suppose $\ga_n$ eventually leaves $\DD(0,r)$ in the sense that there are sequences $-\infty<s_n<t_n \leq 0$ such that $\ga_n(t) \in \DD(0,r)$ if and only if $s_n<t<t_n$. Then, for $n \gg 1$, $\ga_n$ must separate the fixed points of $f_n$ in $\DD(0,r)$. In other words, both components of $\DD(0,r) \sm \ga_n(]s_n,t_n[)$ meet $\{ 0 \} \cup \sO_n$. 
\end{theorem}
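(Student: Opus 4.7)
The plan is to argue by contradiction by estimating $\int_{\ga_n([s_n,t_n])}\om_n$ in two incompatible ways. Suppose all fixed points $\{0\}\cup\sO_n$ lie in a single component of $\DD(0,r)\sm\ga_n(]s_n,t_n[)$; then the other component $U_n$ is a Jordan subdomain of $\DD(0,r)\sm(\{0\}\cup\sO_n)$ bounded by $\ga_n([s_n,t_n])$ and an arc $\alpha_n\subset\bd\DD(0,r)$ with the same endpoints. I also note that $t_n-s_n\to\infty$: since $\ga$ lands at $0$, every fixed $T<0$ has $\ga(T)\in\DD(0,r)$, whence uniform convergence $\ga_n\to\ga$ on $\{T\}$ forces $\ga_n(T)\in\DD(0,r)$ for $n\gg 1$, so $s_n<T$.

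For the lower bound I show that $\myre\int_{\ga_n([s_n,t_n])}\om_n$ grows like $t_n-s_n$. Fix $\tau<0$ with $\ga(\tau)\neq 0$. For $n\gg 1$, uniform convergence together with $\sO_n\to 0$ places both the arc $\ga_n([\tau-1,\tau])$ and the Euclidean chord $[\ga_n(\tau-1),\ga_n(\tau)]$ inside a single topological disk disjoint from $\{0\}\cup\sO_n$, giving a homotopy between them rel endpoints in $\DD(0,r)\sm(\{0\}\cup\sO_n)$. Then \corref{joo}, applied to the backward orbit $z_j=\ga_n(\tau-j)$, propagates this base case: every fundamental arc $\ga_n([\tau-j,\tau-j+1])$ for $1\leq j\leq\lfloor\tau-s_n\rfloor$ is homotopic rel endpoints to the corresponding chord in $\DD(0,r)\sm(\{0\}\cup\sO_n)$. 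By \thmref{A} each chord integral equals $1+u_{f_n}(\cdot)$ with modulus within $\ve$ of $1$. Summing and absorbing the short tail $\int_{\ga_n([\tau,t_n])}\om_n$ (bounded independently of $n$ since $\ga_n([\tau,t_n])$ lies in a fixed compact subset of $\DD(0,r)\sm(\{0\}\cup\sO_n)$) yields
\[
\myre\int_{\ga_n([s_n,t_n])}\om_n\;\geq\;(\tau-s_n)(1-\ve)-C_\tau,
\]
which tends to $+\infty$ with $n$.

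For the upper bound I invoke the non-separation hypothesis. Since $\ov{U_n}$ contains no singular points of $\om_n$, Cauchy's theorem gives $\int_{\ga_n([s_n,t_n])}\om_n=\int_{\alpha_n}\om_n$. Now $\alpha_n$ is a sub-arc of $\bd\DD(0,r)$ and its $\phi_n$-image sits on a single copy of the spiral described in \secref{ldfn}. By that description, together with the bound $\La_n+q\Mu_n\to\rho$ from \eqref{sumind}, this spiral has diameter at most some $C(r)$ uniformly in $n\gg 1$. Hence $|\int_{\alpha_n}\om_n|\leq C(r)$, which combined with the lower bound forces $(\tau-s_n)(1-\ve)\leq C(r)+C_\tau$, contradicting $s_n\to-\infty$. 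The main obstacle is verifying this uniform spiral-diameter bound: it rests on the fact that the net translation $2\pi\ii(\La_n+q\Mu_n)$ stays bounded as $n\to\infty$ by continuity of the r\'esidu it\'eratif, so that the $q$ almost round turns of the spiral (each of size comparable to $r^{-q}$) do not drift apart as $n\to\infty$.
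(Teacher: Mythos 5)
Your proof is essentially the paper's argument repackaged in integral language. Where the paper lifts $\ga_n$ to $\Ga_n = \phi_n\circ\ga_n$ and tracks $\myre(\Ga_n(s_n)) \to -\infty$ via backward iteration of the near-translation $F_n^{-1}$, you compute $\int_{\ga_n([s_n,t_n])}\om_n = \Ga_n(t_n)-\Ga_n(s_n)$ by summing fundamental chord integrals $1+u_{f_n}$ — the same quantity, the same bound. Where the paper observes that non-separation forces $\Ga_n(s_n)$ to lie on the bounded spiral $\theta\mapsto\phi_n(r\e^{\ii\theta})$, you invoke Cauchy's theorem over $U_n$ to equate $\int_{\ga_n}\om_n$ with $\int_{\alpha_n}\om_n$, the integral over a circular arc whose $\phi_n$-image is that same spiral. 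Cauchy's theorem here is precisely the homotopy invariance the paper uses implicitly, so the two upper bounds are identical in content. The uniform spiral-diameter bound you flag as the "main obstacle" is indeed what the paper relies on, and your justification via $\La_n+q\Mu_n\to\rho$ (which follows from continuity of the r\'esidu it\'eratif and does not need non-tangential approach) is correct; in fact you could bound $|\int_{\alpha_n}\om_n|$ more crudely by $2\pi r\cdot\sup_{|z|=r}|\om_n|$ since $\om_n\to\om$ uniformly on that circle.

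One point deserves more care: the base case of \corref{joo}. You assert that for any fixed $\tau<0$ with $\ga(\tau)\neq 0$, the arc $\ga_n([\tau-1,\tau])$ and the chord $[\ga_n(\tau-1),\ga_n(\tau)]$ sit in a common topological disk disjoint from $\{0\}\cup\sO_n$. Both sets do avoid $\{0\}\cup\sO_n$ for $n\gg 1$ (the chord by \lemref{nofp}, the arc by convergence, $\sO_n$ by $\sO_n\to 0$), but that alone does not produce a disk containing both and avoiding $0$ — in principle the closed curve formed by the arc and chord could wind around $0$. What saves the argument is that $\ga$ lands at $0$ asymptotic to a repelling direction, so for $\tau$ chosen sufficiently negative $\ga(]-\infty,\tau])$ lies in a convex sector avoiding $0$; then the arc and chord both lie in that sector, and the homotopy in $\DD(0,r)\sm(\{0\}\cup\sO_n)$ is immediate for $n\gg 1$. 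This is exactly the role played by the hypothesis $\ga\cap\ov{\DD}(0,r)\subset S^+_q$ in the proof of \thmref{B}, which the near-miss proof imports via "as in the proof of Theorem B." You should pin down $\tau$ this way rather than allowing an arbitrary $\tau<0$.

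Subject to that fix, the proof is correct and follows the same route as the paper.
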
      

\begin{proof}
We may assume $r$ is small enough so that \thmref{A} holds for, say, $\ve=1/2$ and also $r<r_2$ so \corref{joo} holds. Evidently $|\ga_n(t_n)|=|\ga_n(s_n)|=r$. Since $\ga$ lands at $0$ and $\ga_n \to \ga$ uniformly on compact subsets of $]-\infty,0]$, $t_n$ is bounded and $s_n \to -\infty$. As in the proof of \thmref{B}, take a single-valued branch of $\phi_n$ mapping $\ga_n([t_n-1,t_n[)$ into an $R$-neighborhood of $-r^{-q}/q$ and continue it analytically along $\ga_n$ to obtain a curve $\Ga_n=\phi_n \circ \ga_n$ defined on $[s_n,t_n]$. By \corref{joo}, $F_n^{-1}(\Ga_n(t))=\Ga_n(t-1)$ for $s_n+1 \leq t \leq t_n$. By \thmref{A}, $\Ga_n$ stays in the cone $\sC(Z_0)=\sC^-_\ve(Z_0)$, where $Z_0=-r^{-q}/q+R/\ve$ as before. Let $J_n=\Ga_n([t_n-1,t_n[)$ and choose the integer $N_n$ such that $s_n+N_n \in [t_n-1,t_n[$. Then $\Ga_n(s_n) \in F_n^{-N_n}(J_n)$ so $\myre(\Ga_n(s_n)) \leq R_0 - N_n(1-\ve)$. In particular, $\myre(\Ga_n(s_n)) \to -\infty$ as $n \to \infty$.  \vs

However, if $\ga_n$ does not separate the fixed points of $f_n$ in $\DD(0,r)$, the arc $\ga_n([s_n,t_n])$ is homotopic in $\ov{\DD}(0,r) \sm ( \{ 0 \} \cup \sO_n)$ to an arc of the circle $|z|=r$ joining $\ga_n(t_n)$ to $\ga_n(s_n)$. Letting $\ga_n(t_n)=r \e^{\ii \theta_n}$ with $\theta_n \in \, ]\pi,\pi]$, it follows that the lift $\Ga_n(s_n)$ must be on the same spiral curve $\theta \mapsto \phi_n(r\e^{\ii \theta}), \ \theta_n-\pi \leq \theta \leq \theta_n+\pi$ as $\Ga_n(t_n)$ is. This implies $\myre(\Ga_n(s_n))$ being bounded below by $\con r^{-q}$, which is a contradiction.  
\end{proof}

\end{document}